\documentclass{amsart}
\usepackage{amsmath,amsthm,amssymb,amscd,tikz-cd,comment}
\theoremstyle{plain}

\usepackage{fullpage}

\usepackage{mathrsfs} 

\usepackage{wrapfig}

\usepackage[normalem]{ulem}
\usepackage{enumitem}
\usepackage{hyperref}
\usepackage{tabularx}
\newcolumntype{s}{>{\centering\hsize=.2\hsize \raggedright}X} 
\newcolumntype{b}{>{\centering\hsize=\hsize\raggedright\arraybackslash}X}

\newcolumntype{C}{>{\centering\arraybackslash}X} 

\usepackage{float}
\restylefloat{table}
\usepackage[justification=centering]{caption}
\usepackage{float}
\usepackage{wrapfig}
\restylefloat{figure}


\newcommand\robout{\bgroup\markoverwith {\textcolor{blue}{\rule[0.5ex]{2pt}{0.4pt}}}\ULon} 

\newcommand\johnout{\bgroup\markoverwith {\textcolor{magenta}{\rule[0.5ex]{2pt}{0.4pt}}}\ULon} 

\newtheorem{thm}{Theorem}[section]
\newtheorem{prop}[thm]{Proposition}
\newtheorem{lemma}[thm]{Lemma}
\newtheorem{cor}[thm]{Corollary}
\newtheorem{conj}[thm]{Conjecture}
\newtheorem{question}[thm]{Question}

\theoremstyle{definition}

\newtheorem{remark}[thm]{Remark}

\newtheorem{heuristic}[thm]{Heuristic}


\newcommand{\F}{{\mathbb F}}

\renewcommand{\L}{{\mathcal L}}

\newcommand{\ve}{\varepsilon}


\newcommand{\Z}{{\mathbb Z}}
\newcommand{\Q}{{\mathbb Q}}

\newcommand{\C}{{\mathbb C}}
\newcommand{\Fp}{{\mathbb F}_p}
\newcommand{\Fpx}{{\mathbb F}_p^\times}

\newcommand{\Fpbar}{\overline{{\mathbb F}}_p}
\newcommand{\Zpbar}{\overline{{\Z}}_p}
\newcommand{\Qpbar}{\overline{{\Q}}_p}

\newcommand{\Qbar}{\overline{{\Q}}}

\renewcommand{\P}{{\mathbb P}}

\newcommand{\Zp}{\Z_p}
\newcommand{\Zpx}{\Z_p^\times}

\newcommand{\Qp}{\Q_p}

\newcommand{\Spec}{\mathrm{Spec}}




\newcommand{\ds}{\displaystyle}
\newcommand{\rhobar}{\overline{\rho}}
\newcommand{\rbar}{\overline{r}}

\newcommand{\langx}{\langle x \rangle}
\renewcommand{\d}{~\!d}

\newcommand{\fk}{\frac{k-2}{2}}
\newcommand{\dz}{~\!dz}
\newcommand{\dmuf}{~\!d \mu_f(x)}

\DeclareMathOperator{\Spf}{Spf}

\DeclareMathOperator{\ord}{ord}

\DeclareMathOperator{\Fil}{Fil}

\DeclareMathOperator{\Sym}{Sym}

\DeclareMathOperator{\GL}{GL}
\DeclareMathOperator{\SL}{SL}

\DeclareMathOperator{\Gal}{Gal}

\DeclareMathOperator{\TS}{TS}
\DeclareMathOperator{\tr}{tr}

\DeclareMathOperator{\ind}{ind}
\DeclareMathOperator{\new}{new}
\newcommand{\pnew}{p\text{-new}}
\DeclareMathOperator{\unr}{unr}

\title{New phenomena arising from $\L$-invariants of modular forms}
\author{John Bergdall and Robert Pollack}

\address{John Bergdall\\Department of Mathematical Sciences\\University of Arkansas\\ 850 W Dickson Street \\Fayetteville, AR 72701\\USA}
\email{bergdall@uark.edu}
\urladdr{http://bergdall.github.io}

\address{Robert Pollack\\Department of Mathematics  \\  University of Arizona \\ 617 N Santa Rita Ave \\ Tucson, AZ 85721 \\USA}
\email{rpollack@arizona.edu}
\urladdr{http://www.math.arizona.edu/people/rpollack}

\subjclass[2000]{11F67 (11F80, 11F85, 11F33)}


\keywords{$\mathcal L$-invariants, $p$-adic $L$-series, deformation spaces of Galois representations, slope conjectures}

\date{\today}

\begin{document}

\begin{abstract}

This article explains how to practically compute $\L$-invariants of $p$-new eigenforms using $p$-adic $L$-series and exceptional zero phenomena. As proof of the utility, we compiled a data set consisting of over 150,000 $\L$-invariants. We analyze qualitative and quantitative features found in the data. This includes conjecturing a statistical law for the distribution of the valuations of $\L$-invariants in a fixed level as the weights of eigenforms approach infinity. One novel point of our investigation is that the algorithm is sensitive to compiling data for fixed Galois representations modulo $p$. Therefore, we explain new perspectives on $\L$-invariants that are related to Galois representations. We propose understanding the structures in our data through the lens of deformation rings and moduli stacks of Galois representations.

\end{abstract}

\maketitle

\setcounter{tocdepth}{1}
\tableofcontents

\section{Introduction}

This article explores a $p$-adic distribution phenomenon in the theory of elliptic modular forms. The reported research exists alongside wider statistical problems in arithmetic geometry and the arithmetic of modular forms. So, this introduction will describe established theories in its first two subsections, after which we bring the article's main character --- the $\L$-invariants --- into focus.

Ultimately, we achieve two goals in this article. First, we describe a practical method to compute $\L$-invariants. It uses their presence in the exceptional zero conjecture for $p$-adic $L$-functions, which was first formulated by Mazur, Tate, and Teitelbaum in the 1980's \cite{MTT}. The computational method itself is more recent, originally developed by the second author roughly 15 years ago, in order to perform {\em ad hoc} computations related to Buzzard's work on the slope problem for modular eigenforms \cite{Buzzard-SlopeQuestions}. Those computations have also been referenced twice in published work by the first author \cite{Bergdall-bounds,BLL}, creating a heightened need for the method's publication. 

Second, we formulate a conjecture on the $p$-adic distribution of $\L$-invariants. We support the conjecture with numerical data and heuristics.  The data sets we gathered are fairly large, numbering in the tens of thousands of eigenforms. When compared with analogous $a_p$-data for eigenforms, we find significant evidence of a yet-to-be formulated theory of ``$p$-adic distributions for automorphic points on generic fibers of deformation rings''. That is quite a phrase, of course. The interested reader may find further inspiration and discussion in the first few pages of  Buzzard and Gee's survey paper on slopes of modular forms \cite{BG-Survey}. In this paper, we will remain as concrete as possible, focused on slopes of $a_p$ and slopes of $\L$-invariants.

\subsection{Archimedean distributions of Hecke eigenvalues}
Fix a positive integer $M$ and let $\Gamma_0 = \Gamma_0(M)$. Suppose $f$ is an eigenform of level $\Gamma_0$ and weight $k$ with $q$-expansion $q + a_2(f)q^2 + a_3(f)q^3 + \dotsb$. A law governs the magnitude of $q$-series coefficients:\ if $p$ is a prime number and $p \nmid M$, then
$$
|a_p(f)| \leq 2p^{\frac{k-1}{2}}.
$$
This was proven by Deligne in 1970's, as a consequence of his work on the Weil conjectures (see \cite[No.\ 5]{Del-Bourbaki}). It is known as the Ramanujan--Petersson bound.

Fascinating statistical questions arise after normalizing by the Ramanujan--Petersson bound. The Sato--Tate conjecture for elliptic modular forms, proven in works of Harris and Taylor with Clozel, Shepherd-Barron, and Barnet-Lamb and Geraghty \cite{ClozelHarrisTaylor,HarrisShepherdBarronTaylor,BarnetLambGeraghtyHarrisTaylor-CalabaiYaus}, predicts the {\em real} distribution of 
\begin{equation}\label{eqn:ram-norm}
a_p(f)p^{\frac{1-k}{2}} \in [-2,2],
\end{equation}
for a fixed eigenform $f$ and $p$ ranging over the primes. Its importance is based on connections, independently made by Sato and Tate, to distributions of point-counts for the reductions of rational elliptic curves over finite fields. (See \cite[p.\ 106-107]{Tate-AlgCycles}.) Its proof is also a landmark in modern number theory, for it was driven by potential modularity techniques invented by Taylor and his collaborators, generalizing the modularity theorems of Wiles, Taylor and Wiles, and of Breuil, Conrad, Diamond, and Taylor \cite{Wiles-FLT, TaylorWiles, BCDT-FLT}.

The {\em vertical} Sato--Tate conjecture predicts the distribution of $a_p(f)p^{\frac{1-k}{2}}$ in a different aspect. Namely, one fixes the prime $p$ (still co-prime to $M$) and allows $k$ to grow to infinity. (In each weight $k$, the finite number of eigenforms are ordered in an arbitrary way.) The distribution of the normalized statistic was established by Serre and, at least for level one eigenforms, by Conrey, Duke, and Farmer \cite{Serre-Distribution,CDF}. (See also Sarnak's earlier article \cite{Sarnak-Stats}.)

Today, the Sato--Tate conjecture and its vertical analogue are understood as parts of wider conceptual apparatuses. For instance, both equidistribution statements are with respect to an {\em a priori} defined measure related to a linear group, either the Haar measure on the Sato--Tate group or the Plancherel measure for $\mathrm{PGL}_2(\Qp)$ in the vertical case. In addition, both versions have also been generalized beyond elliptic modular forms. Sutherland's Sato--Tate-based contribution to the Arizona Winter School is an excellent introduction to analogues of the classical conjecture \cite{Sutherland-SatoTate}, and we refer to articles of Shin and Dalal for discussion on generalizations of the vertical version \cite{Shin-AutomorphicPlancherel, Dalal-Equidistribution}.

\subsection{Non-Archimedean distributions  of Hecke eigenvalues}\label{subsec:nonarch-dist}

We now focus on $p$-adic properties, rather than real ones. Let $v_p$ be a $p$-adic valuation on $\Qbar$. The {\em slope} of an eigenform $f$ of level $\Gamma_0$ is $v_p(a_p(f))$. Predicting slopes as the weight varies is known as the {\em slope problem}. It was the focus of work by Buzzard and Gouv\^ea in the early 2000's  and the two authors' more recent ``ghost conjecture'' \cite{Buzzard-SlopeQuestions,Gouvea,BP-Ghost,BP-Ghost2}. A major breakthrough on the slope problem was recently made by Liu, Truong, Xiao, and Zhao \cite{LTXZ-Ghost,LTXZ-Proof}. 

Gouv\^ea's work, in particular, focused on a $p$-adic analogue of the vertical Sato--Tate phenomenon. He proposed the following conjecture on the distribution of $v_p(a_p(f))$ as $k \rightarrow \infty$.
\begin{conj}\label{conj:gouvea-intro}
Let $p$ be a prime number and assume $p \nmid M$. Define
\begin{equation*}
\mathbf x_T = \left\{\frac{p+1}{k} \cdot v_p(a_p(f)) ~\Big|~ \text{$f$ is a $\Gamma_0$-eigenform of weight  $k \leq T$} \right\}.
\end{equation*}
Then, $\mathbf x_T$ becomes equidistributed on $[0,1]$ for Lebesgue measure as $T\rightarrow \infty$.
\end{conj}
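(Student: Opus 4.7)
The plan is to reduce Conjecture~\ref{conj:gouvea-intro} to a combinatorial question about an explicit Newton polygon, using the ghost conjecture of \cite{BP-Ghost, BP-Ghost2}, recently established by Liu, Truong, Xiao, and Zhao \cite{LTXZ-Proof}. First I would pass to level $\Gamma_0(Mp)$. Each $T_p$-eigenform $f \in S_k(\Gamma_0(M))$ admits two $p$-stabilizations at level $\Gamma_0(Mp)$, whose $U_p$-slopes $v_p(\alpha), v_p(\beta)$ are the slopes of the $p$-Hecke polynomial $x^2 - a_p(f)x + p^{k-1}$. In particular $v_p(\alpha) + v_p(\beta) = k - 1$ and $\min(v_p(\alpha), v_p(\beta)) = v_p(a_p(f))$. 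Hence the multiset of $T_p$-slopes at tame level $M$ and weight $k$ is recovered as the lower half of the $U_p$-slopes on the $p$-oldform subspace of $S_k(\Gamma_0(Mp))$.

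Next I would stratify by mod-$p$ Galois representation $\rhobar$. For each sufficiently generic $\rhobar$, the ghost theorem expresses the $U_p$-slopes on the $\rhobar$-component of $S_k(\Gamma_0(Mp))$ as the slopes of the Newton polygon of an explicit power series $G_{\rhobar}(w)$, whose coefficients are determined combinatorially from dimensions of spaces of modular forms. The $p$-new contribution consists of $U_p$-slopes all equal to $\tfrac{k-2}{2}$ and can be isolated and discarded. Summing over the finite collection of residual representations recovers the complete slope data, reducing Conjecture~\ref{conj:gouvea-intro} to the statement that the slopes of the Newton polygon of $G_{\rhobar}$, once restricted to the $p$-oldform lower half and rescaled by $\tfrac{p+1}{k}$, equidistribute on $[0, 1]$.

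The principal obstacle is this final step. Because dimensions of modular forms spaces grow linearly in $k$, the vertices of the Newton polygon of $G_{\rhobar}$ lie asymptotically on a piecewise-linear envelope, which at least suggests that the normalized slopes sweep through $[0, 1]$. But demonstrating that the empirical distribution of Newton-polygon slopes converges weakly to Lebesgue measure requires fine asymptotic control of the $p$-adic valuations of the ghost coefficients and uniformity in $\rhobar$. In particular, one must rule out the slopes clustering at rational points or concentrating on a lower-dimensional set --- a possibility the explicit formulas make plausible but not self-evident. Finally, the non-generic $\rhobar$ where the ghost formulas degenerate must be handled by separate arguments to extract an unconditional limit statement.
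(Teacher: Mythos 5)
The statement you are asked to prove is labeled in the paper as a \emph{conjecture}, not a theorem: there is no proof of Conjecture~\ref{conj:gouvea-intro} in the paper, and it remains open in general. The paper's own treatment is to record the partial result Theorem~\ref{thm:LTXZ-application} (the $\rhobar$-refined equidistribution of Liu--Truong--Xiao--Zhao, valid for $p \geq 11$, $\rhobar$ regular, and sufficiently generic Serre weights $2 \leq a \leq p-5$), and to note that BP-Ghost2 already proves Gouv\^ea's distribution conjecture as a corollary of the ghost conjecture. Your sketch is precisely that approach: $p$-stabilize to level $\Gamma_0(Mp)$, stratify by $\rhobar$, apply the ghost series Newton polygon, and analyze the slope distribution. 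So far as it goes, this is the right route and essentially the one the authors describe.

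The problem is that you have not closed the gaps you identify, and the paper explicitly tells you they cannot yet be closed. First, your passage from $T_p$-slopes to $U_p$-slopes uses $\min(v_p(\alpha), v_p(\beta)) = v_p(a_p(f))$, which requires $v_p(a_p(f)) < (k-1)/2$ (in particular $a_p(f) \neq 0$); the paper notes $a_p(f) = 0$ does occur, though Calegari--Sardari bound how often for non-CM forms, so this needs to be argued away as statistically negligible. Second, and more seriously, the ghost conjecture has \emph{no formulation} for irregular $\rhobar$ (those with $\rbar_f$ irreducible), so ``handled by separate arguments'' is not a deferral of technicalities but an open research problem the paper singles out as ``a more fundamental problem.'' Third, even in the regular case the ghost theorem of LTXZ imposes $p \geq 11$ and the generic weight condition $2 \leq a \leq p-5$, and removing these is likewise stated as open. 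The Newton-polygon equidistribution step you flag as the ``principal obstacle'' has actually been carried out in the regular ghost setting (Corollary~3.3 of BP-Ghost2), so that part is available; the genuine obstacles are the hypotheses under which the ghost model applies at all. In short: this is a proposed reduction to known partial results, not a proof, and the paper itself takes the same view by leaving the statement as a conjecture.
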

For accuracy's sake, we note that Conjecture \ref{conj:gouvea-intro} is only called a {\em question} by Gouv\^ea, as it was based largely on data gathered only for the smallest primes and weights (\cite[p.\ 3]{Gouvea}). Additionally, Gouv\^ea's paper is limited to eigenforms of level $\SL_2(\Z)$. However, significant cases of Conjecture \ref{conj:gouvea-intro} are known now (see Theorem \ref{thm:LTXZ-application} below) and so it seems apt to use the label ``conjecture''. We refer to Conjecture \ref{conj:gouvea-intro} simply as ``Gouv\^ea's distribution conjecture'' and take responsibility if its literal statement turns out to be false.

Let us review numerical evidence toward Conjecture \ref{conj:gouvea-intro}. Consider the prime $p=2$ and eigenforms of level $\SL_2(\Z)$. Figure \ref{fig:2_slopes_intro} is a scatter plot of $v_2(a_2)$ where $a_2=a_2(f)$ is sampled for eigenforms $f \in S_k(\SL_2(\Z))$ and $k \leq 512$. In the plot, the variable size of a point indicates the multiplicity of that point. 

\begin{figure}[htbp]
\centering
\includegraphics[scale=.75]{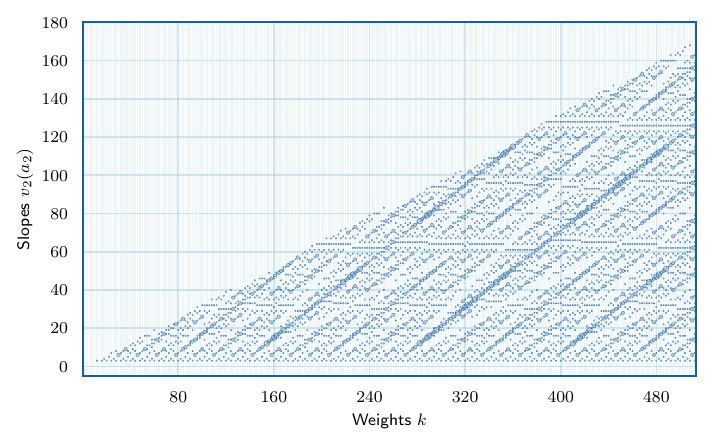}
\caption{$2$-adic slopes in level $\SL_2(\Z)$ for $k\leq 512$. (Total data:\ 5334 eigenforms.)}
\label{fig:2_slopes_intro}
\end{figure}

Figure \ref{fig:2_gouvea_dist} presents evidence for Gouv\^ea's distribution conjecture. Namely, the left-hand portion of that figure is a scatter plot for the Gouv\^ea-normalized data
\begin{equation*}
\frac{3}{k}\cdot v_2(a_2) \in [0,+\infty],
\end{equation*}
and the right-hand portion of the figure is the scatter plot's projection onto a histogram. The distribution conjecture is that, as $k \rightarrow \infty$, the histogram becomes a solid blue rectangle supported on $[0,1]$. We note that there is no analogue of the Ramanujan--Petersson bound in the $p$-adic context. The support of the statistical distribution on the interval $[0,1]$ has not been conceptually explained.

\begin{figure}[htbp]
\centering
\includegraphics[scale=.75]{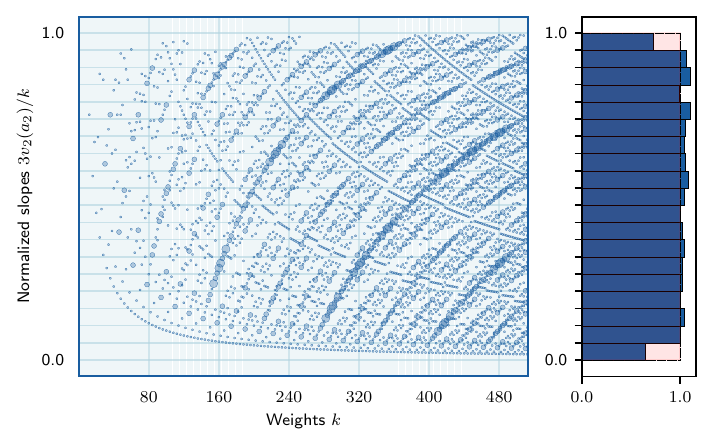}
\caption{Left:\ Normalized $2$-adic slopes in level $\SL_2(\Z)$ for $k\leq 512$.\\ Right:\ Histogram of normalized slopes. (Total data:\ 5334 eigenforms. Num.\ bins: 19.)}
\label{fig:2_gouvea_dist}
\end{figure}

Twenty years after Gouv\^ea's article, there is a positive result toward a refined version of Conjecture \ref{conj:gouvea-intro}, which we explain now. To each eigenform $f$, one may attach a semi-simple mod $p$ Galois representation
\begin{equation*}
\rhobar_f : \mathrm{Gal}(\Qbar/\Q) \rightarrow \GL_2(\Fpbar).
\end{equation*}
For any such $\rhobar$, we consider
\begin{equation*}
\mathbf x_T(\rhobar) = \left\{\frac{p+1}{k} \cdot v_p(a_p(f)) ~\Big|~ \parbox{4.75cm}{\centering $f$ is an $\Gamma_0$-eigenform of weight  $k \leq T$ such that $\rhobar_f \simeq \rhobar$} \right\}.
\end{equation*}
For a fixed $M$ there are only a finite number of $\rhobar$ arising from $\Gamma_0$-eigenforms. Therefore, one might hope that each $\mathbf x_T(\rhobar)$ is equidistributed on $[0,1]$, and then the original distribution conjecture is a finite sum of $\rhobar$-versions. The new result is a proof of the $\rhobar$-version for certain $\rhobar$. For notation, we denote by $\omega$ the mod $p$ cyclotomic character, and let $I_{\Qp}$ be the inertia subgroup of $\mathrm{Gal}(\Qpbar/\Qp)$. The following is one of six applications of a recent breakthrough on slopes by Liu, Truong, Xiao, and Zhao \cite{LTXZ-Ghost,LTXZ-Proof}.

\begin{thm}[{\cite[Theorem 1.21]{LTXZ-Proof}}]\label{thm:LTXZ-application}
Assume that $p \geq 11$. Let $\rhobar$ be irreducible and modular of level $\Gamma_0$, and
\begin{equation*}
\rhobar|_{I_{\Qp}} \simeq \begin{pmatrix} \omega^{a+1+b} & \ast \\ 0 & \omega^{b} \end{pmatrix},
\end{equation*}
with $2 \leq a \leq p-5$ and $b \in \Z$. Then, $\mathbf x_T(\rhobar)$ becomes equidistributed on $[0,1]$ for Lebesgue measure as $T \rightarrow \infty$.
\end{thm}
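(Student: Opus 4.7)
The plan is to reduce the theorem to the ghost conjecture of Bergdall and Pollack, which under the stated hypotheses ($p \geq 11$, $\rhobar$ irreducible, inertia of the given shape with $2 \leq a \leq p - 5$) is the main theorem of \cite{LTXZ-Ghost,LTXZ-Proof}. The ghost conjecture furnishes an explicit power series $G_{\rhobar}(w,t) \in \Zp[[w]][[t]]$ whose Newton polygon, evaluated at the weight variable $w$ corresponding to a classical weight $k$, recovers the multiset $\{v_p(a_p(f)) : f \in S_k(\Gamma_0),\ \rhobar_f \simeq \rhobar\}$. Hence $\mathbf x_T(\rhobar)$ becomes a weight-averaged list of rescaled ghost Newton slopes, and the theorem is reduced to an equidistribution statement about those slopes.

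The first step I would take is to exploit the combinatorial description of the ghost coefficients. Each coefficient $g_n(w)$ is a polynomial whose zero locus and multiplicities are prescribed by dimensions of classical cuspidal subspaces at twists of $\rhobar$. Standard dimension formulas then yield a closed form, linear in $n$ up to bounded error, for $\ord_{w=k}\, g_n(w)$, which determines the contribution of the $n$-th segment of the Newton polygon of $G_{\rhobar}(k,\cdot)$. Summing in $n$ gives the total area under the polygon at weight $k$, and dividing by the number of slopes ($\sim \dim S_k(\Gamma_0)^{\rhobar}$) identifies the mean normalized slope as $\tfrac{1}{2}$, consistent with Lebesgue measure on $[0,1]$. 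For a fixed subinterval $[\alpha,\beta] \subset [0,1]$, the same dimension formulas convert the count of normalized slopes in $[\alpha,\beta]$ into a lattice-point count under a piecewise-affine function of $n$ and $k$. Evaluating this count and averaging over $k \leq T$ should yield $(\beta - \alpha) |\mathbf x_T(\rhobar)| + o(|\mathbf x_T(\rhobar)|)$, which is equidistribution.

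The principal obstacle is the error control. Individual weights are not expected to equidistribute on their own, so the $o(\cdot)$ term cannot come from any single-weight estimate; it must emerge from cancellation in the average over $k \leq T$. The ghost zeros lie in arithmetic-progression-style families, with shifts governed by Serre weights and twists of $\rhobar$, and the bounded error in $\ord_{w=k}\, g_n(w)$ propagates into small perturbations of the vertex positions of the Newton polygon. Bounding the aggregate contribution of these perturbations and showing it is $o(T^2)$ after summing over $k \leq T$ is the technical core of the argument. The constraint $2 \leq a \leq p - 5$ enters crucially here because it is precisely the range in which the ghost series is defined unambiguously and the underlying dimension formulas are exact, eliminating secondary error sources that would otherwise require separate treatment.
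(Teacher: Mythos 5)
Your proposal takes the same route the paper indicates: reduce to the ghost conjecture, now a theorem of Liu--Truong--Xiao--Zhao under the stated hypotheses, and then deduce equidistribution from a combinatorial analysis of the ghost series' Newton polygons. The paper itself treats this as a citation, deferring to \cite[Theorem 1.3]{LTXZ-Proof} for the ghost conjecture and to \cite[Corollary 3.3]{BP-Ghost2}, where Gouv\^ea's distribution conjecture was already shown to follow from the ghost conjecture.

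One calibration worth flagging in your error analysis: you write that individual weights are not expected to equidistribute on their own, so the $o(\cdot)$ term must come from cancellation in the average over $k \leq T$. In fact, the ghost series gives a \emph{per-weight} estimate of the form $s_i = \frac{k}{p+1}\cdot \frac{i}{d_k} + O(\log k)$ (this is \cite[Theorem 3.1(a)]{BP-Ghost2}, restated as Heuristic \ref{heuristic:slope-size} later in the present paper). After normalizing by $\frac{p+1}{k}$, the relative deviation from the uniform grid is $O(\log k / k) \to 0$, so the empirical measure of normalized slopes in a \emph{single} weight $k$ already converges weakly to Lebesgue measure on $[0,1]$ as $k \to \infty$. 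No cancellation across weights is needed; pooling over $k \leq T$ then gives the theorem directly, with aggregate error $O(T\log T) = o(T^2)$. This is a simplification of your plan, not a gap in it.
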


The most serious assumption in Theorem \ref{thm:LTXZ-application} is that the action of inertia is via powers of $\omega$ along the diagonal, as opposed to level two characters of inertia. Equivalently, that $\Gal(\Qpbar/\Qp)$ acts reducibly via $\rhobar$, as opposed to irreducibly. Modular Galois representations of level $\Gamma_0$ that satisfy this assumption are called {\em regular}. 

The proof of Theorem \ref{thm:LTXZ-application} is based on a proof of a more remarkable theorem on regular mod $p$ Galois representations.  Namely, we developed, between 2015 and 2019, a combinatorial model for slopes of $a_p$ that is called the {\em ghost series}. The {\em ghost conjecture} asserts that the ghost series predicts the slopes of weight $k$ eigenforms, via its Newton polygons, whenever $\rhobar$ is regular \cite{BP-Ghost,BP-Ghost2}. The main theorem of Liu--Truong--Xiao--Zhao  is that the ghost conjecture holds under the hypotheses listed in Theorem \ref{thm:LTXZ-application}. See \cite[Theorem 1.3]{LTXZ-Proof}. In this way, analyses of slopes are reduced to analyses for the combinatorially-defined ghost series, which are easier. In fact, the authors themselves proved Gouv\^ea's distribution conjecture is a corollary of their ghost conjecture. See \cite[Corollary 3.3]{BP-Ghost2}. Nonetheless, all but the smallest fraction of credit for Theorem \ref{thm:LTXZ-application} should be directed at Liu, Truong, Xiao, and Zhao, for their amazing work proving the ghost conjecture.

Alongside this breakthrough, interesting open questions remain related to Gouv\^ea's distribution conjecture. Of course, there is a strong desire to remove the ``generic'' assumption placed on the value of $a$ in Theorem \ref{thm:LTXZ-application}. A more fundamental problem is to establish Conjecture \ref{conj:gouvea-intro} in the case of irregular $\rhobar$, although such $\rhobar$ suffer from having no ghost series model upon which to base the analysis of slopes. 

One might also seek a {\em conceptual} understanding of the normalizing factor $\frac{p+1}{k}$, since we have such an understanding of the factor $p^{\frac{1-k}{2}}$ in the Archimedean cases. Normalizing by $k$ seems reasonable enough because there is a linear-in-$k$ number of eigenforms of weight $k$. Dividing by $k$ is also natural if one works at level $\Gamma_0(pM)$, where the largest $a_p$-slope is no more than $k-1$. (Gouv\^ea technically normalizes by $\frac{p+1}{k-1}$, even.) So, it is the factor of $p+1$ that remains obscure. Why is $v_p(a_p(f)) \leq \frac{k}{p+1}$ {\em almost} all the time? We emphasize ``almost'' here, as the proposed statistical bound does not always hold. After all, it can happen that $a_p(f) = 0$. (Calegari and Sadrari {\em did} recently prove that for fixed $M$ there are a finite number of eigenforms with vanishing $a_p$, excluding the forms with complex multiplication. See \cite[Theorem 1.1]{CalegariSardari}.)

In the rest of the paper, we reinforce Gouv\^ea's distribution conjecture by presenting a second instance of it.  Specifically, we turn to a set of $p$-adic statistics of eigenforms that are defined and calculated in a completely {\em different} way than $a_p$-slopes but which nonetheless seem to enjoy a remarkably similar distribution property. These are the $\L$-invariants of the article's title.

\subsection{The $\L$-invariants}\label{subsec:Linv-intro}

The discussions above dismissed the primes dividing the level $M$. One reason is that if $p \mid M$, then the possible values of $a_p(f)$ are completely understood.
\begin{enumerate}[label=(\roman*)]
\item If $p$ divides $M$ exactly once and $f$ has weight $k$, then $a_p(f) = \pm p^{\frac{k}{2}-1}$. Even more, the number of occurrences of $a_p(f) > 0$ versus $a_p(f) < 0$ in any given weight $k$ is bounded. (For this bounded-ness, see \cite[Corollary 1.4]{Martin-RootMurmurations} (or \cite[Section 3]{M} if $M$ is squarefree) or the Theorem \ref{thm:medvedetal} of Anni, Ghitza, and Medvedovsky.)
\item If $p^2 \mid M$, then $a_p(f) = 0$.
\end{enumerate}

The rest of our work focuses on the first case, which is of newforms of level $M = Np$ where $N$ is co-prime to $p$. The integer $N$ is called the tame level. Thus a newform $f$ of level $\Gamma_0$  has a weight $k$ and a discrete sign $\pm$. There is also a third special invariant of such a newform, called its $\L$-invariant $\L_f$. As far as it is known, it is impossible to calculate $\L_f$ directly from $q$-expansions or the local factors of the underlying automorphic representation. One can extract it, however, from related structures in the $p$-adic theory of automorphic forms:\ Galois representations, $p$-adic $L$-functions, and $p$-adic families of eigenforms. Understanding $\L$-invariants motivated some of the early advances within the $p$-adic Langlands program \cite{B}.

The most classical case of an $\L$-invariant arises from a weight two eigenform $f$ that corresponds to an rational elliptic curve $E$ with split multiplicative reduction at $p$.  In this case, the curve $E$ is a Tate curve over $\Q_p$, say with Tate parameter $q_E$. Choose the branch of the $p$-adic logarithm such that $\log_p(p) = 0$. Then, the $\L$-invariant of $E$ (or, equivalently, of $f$) was defined by Mazur, Tate, and Teitelbaum as
\begin{equation*}
\L_f = \L_E = \frac{\log_p(q_E)}{v_p(q_E)} \in \Q_p.
\end{equation*}
These $\L$-invariants are quickly calculated. For instance, if $j = j_E$ is the $j$-invariant of $E$, then $j^{-1} \in p\Z_p$ and $q_E$ can be expressed as an infinite series (see \cite[II.1]{MTT})
\begin{equation*}
q_E = \frac{1}{j} + \frac{744}{j^2} + \frac{750420}{j^3} + \dotsb \in p\Z_p.
\end{equation*}
From here, $\L_E$ can be found to any desired accuracy.

One definition of the $\L$-invariant that applies to a general $\Gamma_0$-newform is the Fontaine--Mazur $\L$-invariant. It is defined using Fontaine's $p$-adic Hodge theory for local Galois representations \cite{Mazur-Monodromy}. Its formal definition is recalled in Section \ref{subsec:Linvs-local-Galois}. Here, we only note its nature in parallel with $a_p$. For an eigenform $f$, let
$$
\rho_f : \Gal(\Qbar/\Q) \rightarrow \GL_2(\Qpbar)
$$
be the semi-simple (irreducible in fact, since $f$ is cuspidal) $p$-adic Galois representation attached to $f$. Let $r_f := \rho_f|_{\Gal(\Qpbar/\Qp)}$ be its restriction to a decomposition group at $p$. Then:

\begin{enumerate}[label=(\roman*)]
\item If $f$ has level $\Gamma_0(N)$, then $r_f$ may be uniquely described in terms of $k$ and $a_p(f)$.
\item If $f$ has level $\Gamma_0(Np)$, the representation $r_f$ may be uniquely described by $k$, the sign of $a_p(f) = \pm p^{\frac{k}{2}-1}$, and $\L_f \in \Qpbar$.
\end{enumerate}
So, from the perspective of moduli of Galois representations, the $a_p$-parameter is to eigenforms of level prime-to-$p$ as the $\L$-invariant is to eigenforms of level $\Gamma_0$.

There is a second connection between $a_p$ and $\L$-invariants. Namely, a famous formula first established by Greenberg and Stevens \cite{GS} in the elliptic curve case states
\begin{equation}
\label{eqn:ap}\tag{GS}
\L_f = -2\frac{a_p'(k)}{a_p(f)},
\end{equation}
where $a_p'(-)$ is the derivative of $a_p$ along the unique $p$-adic family of eigenforms passing through $f$ on Coleman and Mazur's eigencurve. (See \cite{Colmez-Linvariant} for a proof in terms of the Fontaine--Mazur $\L$-invariant.) For a fixed $\Gamma_0$-newform $f$, \eqref{eqn:ap} connects the $p$-adic magnitude of $\L_f$ to the variation of $a_p$ as a function on the eigencurve. Locally near $f$ on the eigencurve, all other eigenforms have level prime-to-$p$, and so the variation of slopes at level prime-to-$p$ influences $\L$-invariants at level $\Gamma_0$, and vice versa. The paper \cite{Bergdall-bounds} explores this theme further. See also \cite{CitraoGhateYasuda-Limit}.

To summarize, we have defined the $\L$-invariant of certain elliptic curves and indicated two general definitions (one could take \eqref{eqn:ap} as a definition). Further definitions, in historical order, are due to Teitelbaum \cite{T}, Coleman \cite{C}, Darmon \cite{D} and Orton \cite{O}, and Breuil \cite{B}. Some of these apply to any $\Gamma_0$-newform and some apply only in select situations. Starting in Section \ref{subsec:intro-ez}, we focus on the connection between $\L$-invariants and exceptional zeros of $p$-adic $L$-functions. That connection lies at the heart of the numerical computations reported on in this article. Before that, we summarize previously published computations.

\subsection{Prior computations}\label{subsec:history}

Despite the apparent interest in $\L$-invariants, with many researchers proposing different definitions and perspectives, there has been a definite deficit on explicitly {\it computing} them. Exactly two examples not coming from elliptic curves were computed in the seminal work of Mazur, Tate, and Teitelbaum. See \cite[pg.\ 46]{MTT}.\footnote{The second example appears to be off by a factor of $p=5$! (Exclamation, not factorial.)} In the decades following that study, only a few more examples were computed, in various works of Coleman and Teitelbaum, of Coleman, Stevens, and Teitelbaum, and of Lauder \cite{CT,CST,Lauder}.

Only more recently have robust algorithms been put forward. Gr\"af was the first (to our knowledge) to systematically publish data on $\L$-invariants \cite{G}.  He directly calculated using Teitelbaum's definition, which limits the data to eigenforms that arise via transfer from quaternionic modular forms. One computational advance applied in Gr\"af's approach is Franc and Masdeu's work on fundamental domains of the tree for $\GL_2(\Qp)$ (\cite{FM}). Anni, B\"ockle, Gr\"af, and Troya published further tables a short time later \cite{ABGT}. Their approach is a generalization of a method exposed by Coleman, Stevens, and Teitelbaum \cite{CST}. It involves  indirectly calculating $a_p'(k)/a_p(f)$ as in \eqref{eqn:ap}, by using the $p$-adic  variation of the $p$-th Hecke operator acting on overconvergent $p$-adic modular forms. The practicality of this approach is partly based on a computational advance by Lauder and by Vonk, who developed algorithms to calculate $p$-adic Hecke actions in time logarithmic in the weight, which is crucial for estimating $p$-adic derivatives \cite{Lauder,Vonk}.

The works of Gr\"af and Anni--B\"ockle--Gr\"af--Troya have limitations. For instance, they ``only'' calculate the list of $\L$-invariants arising in a fixed weight $k$ and with a fixed sign $a_p(f)=\pm p^{\frac{k}{2}-1}$. They cannot say which $\L$-invariant corresponds to which form. Their works also have distinct, beneficial, features. Gr\"af's work computes a little more than $\L$-invariants, since it also calculates a natural linear operator for which the $\L$-invariants are eigenvalues. (This is a special feature of Teitelbaum's definition.) The method via the Greenberg--Stevens formula notably presents the chance to perform calculations at level prime-to-$p$ as well. The ratio $-2a_p'(k)/a_p(f)$ is still called an $\L$-invariant then, however it is a purely global (even in Galois-theoretic terms) invariant. See \cite{Hida-GreenbergLInvariantAdjoint,Mok-LInvariant}, for instance. Such calculations could be relevant to questions on constant slope families raised in \cite{Bergdall-bounds}. To our knowledge, no one has systematically pursued such calculations.

\subsection{The exceptional zero method}\label{subsec:intro-ez}
This article focuses on practically calculating $\L$-invariants using $p$-adic $L$-functions. The departure point is the observation of Mazur, Tate, and Teitelbaum that the $p$-adic $L$-function of a eigenform $f$ of level $\Gamma_0$ has an {\em exceptional zero} at $s=k/2$ whenever the sign of $a_p(f)$ is positive. In that case, Mazur, Tate, and Teitelbaum {\em predicted} the existence of the $\L$-invariant as an invariant that would satisfy the relationship
\begin{equation}\label{eqn:ez}\tag{EZ}
L_p'(f,k/2) = \L_f \cdot L_\infty(f,k/2),
\end{equation}
and such that $\L_f$ would not change if $f$ is twisted by a Dirichlet character that is trivial at $p$. Here $L_p(f,s)$ is the $p$-adic $L$-function of $f$ and $L_\infty(f,k/2)$ is the algebraic part of the central $L$-value.  

Mazur, Tate, and Teitelbaum proposed a definition for $\L_f$ only in the case of weight two eigenforms, and Greenberg and Stevens established \eqref{eqn:ez} for such forms \cite{GS,GS2}. Later, Kato, Kurihara, and Tsuji proved \eqref{eqn:ez} based on the Fontaine--Mazur definition of $\L_f$. Their never-published proof is included in Colmez's Bourbaki survey \cite{Colmez-Bourbaki}. See Theorem \ref{thm:ez-theorem} below, or Colmez's survey for further historical discussion of \eqref{eqn:ez}.

The exceptional zero method to compute $\L_f$ means computing $L_p'(f,k/2)$ and $L_\infty(f,k/2)$, and then dividing one by the other.  The algebraic part of the central value can be computed using modular symbols. The $p$-adic $L$-function $L_p(f,s)$ could be calculated using {\em overconvergent} modular symbols as in \cite{PS1}. One may wonder why this section doesn't simply end with that observation.  In fact, there are some subtleties that one encounters in doing these computations.

First, algebraic central values can vanish. After all, whether or not $L_\infty(f,k/2) = 0$ is part of intensely studied questions such as Birch and Swinnerton-Dyer's conjecture on ranks of elliptic curves. When $L_\infty(f,k/2) = 0$, it is clearly impossible to calculate $\L_f$ as
$$
\L_f = \frac{L_p'(f,k/2)}{L_\infty(f,k/2)}.
$$
One may overcome this by replacing $f$ by a twist $f_\chi$ by a quadratic Dirichlet character $\chi$ trivial at $p$. The twist doesn't affect the $\L$-invariant, and $f_\chi$ will have a non-vanishing central $L$-value for some $\chi$. Incidentally,  twisting also allows us to use \eqref{eqn:ez} when $a_p$ is negative:\ if $a_p(f) < 0$, we could perform a twist by a character where $\chi(p) = -1$, instead. See Section \ref{subsec:centralvalues-quadratictwists} for a precise discussion.

Second, there are two issues with the approach ``using overconvergent modular symbols as in \cite{PS1}'' to compute $L_p'(f,k/2)$. First, the only completely developed computer programs are limited to eigenforms with coefficients defined over $\Qp$ and so a custom computation is needed. Second, before embarking on such a computation, one notes however that overconvergent modular symbols are overkill for the problem. Indeed, such a computation would produce the Taylor expansion of $L_p(f,s)$ around $s = k/2$. However, we do not need the entire $p$-adic $L$-function. We only need a single value of the first derivative!

To this end, in Section \ref{sec:estimate-derivative} we explain how to avoid using overconvergent modular symbols and instead rely on classical modular symbols. We first recall how to represent $L_p'(f,k/2)$ as the integral of a single power series on $\Zp^\times$. This integral can be approximated by Riemann sums from the classical modular symbol associated with $f$. The main theoretical result is Theorem \ref{thm:error_bound}, which is an estimate for the error term in such an approximation. Roughly, Riemann sums on balls on the form $a+p^n\Zp$ produce an error term bounded by $p^{e-nk/2}$ where $e = O(\log k)$, with constants independent of $n$. So, by taking $n \rightarrow \infty$, we could compute as many digits as we like, although the number of integrals calculated increases exponentially with $n$.

And now, perhaps, the reader is doubly confused. Overconvergent modular symbols provide a Taylor expansion of $L_p(f,s)$ in time linear in the number of $p$-adic digits, whereas the method of Riemann sums appears to be exponential. Here we encounter an unexpected, and helpful, phenomenon:\ in practice, we can almost always choose just $n = 1$. That is, estimates using Riemann sums over just the balls $a+p\Z_p$ are more than enough for the vast majority of the data. In fact, the $n=1$ approximation accurately gives roughly $k/2$ digits for $L_p'(f,k/2)$ and our data indicates $L_p'(f,k/2)$ has $p$-adic norm quite close to $1$, regardless of $k$. Thus $k/2$ digits of accuracy is more than enough to compute the $p$-adic valuation of the $\L$-invariant for all but the smallest of weights.

We implemented the above sketch in Magma \cite{magma}. See the github repository \cite{Linv-github}. The method relies on constructing modular eigensymbols representing the Galois orbits of newforms. This is a time consuming process, as the weight grows. However, the method has a major benefit over those described in Section \ref{subsec:history}. Namely, by gathering data eigenform-by-eigenform, we are able to line $\L$-invariants up alongside global arithmetic information, such as the congruence classes of Galois representations modulo $p$. Therefore, we can more clearly analyze our data through the lens of phenomena in the $p$-adic Langlands program.

\subsection{Data analysis}

We now summarize the findings indicated by our data. Figure \ref{fig:2_2_raw_huge} provides a sample plot. The horizontal axis represents even integer weights up to just beyond $800$. The vertical axis represents the $2$-adic valuations $v_2(\L_f)$ of level $\Gamma_0(2)$-newforms. This data set includes over thirteen thousand eigenforms. As in Figure \ref{fig:2_slopes_intro}, the size of a scatter point represents a multiplicity in the data.

\begin{figure}[htbp]
\centering
\includegraphics[scale=.75]{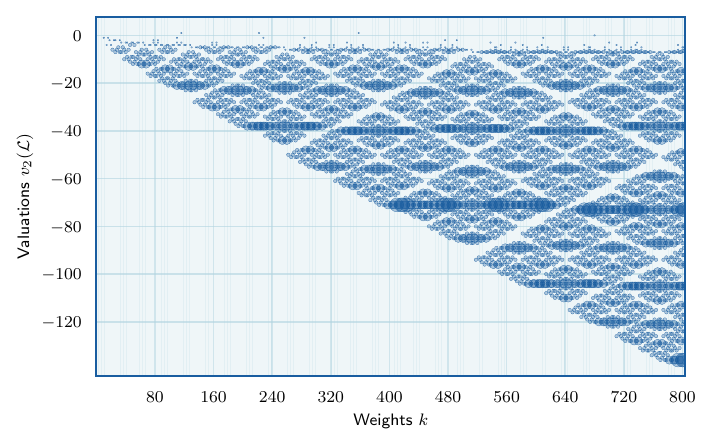}
\caption{$2$-adic valuations of $\L$-invariants in level $\Gamma_0(2)$ with respect to weights $10 \leq k \leq 804$. (Total data:\ 13465 eigenforms.)}
\label{fig:2_2_raw_huge}
\end{figure}

Many aspects of Figure \ref{fig:2_2_raw_huge} are striking:\ the clear triangular shape; the diagonal bands of white space; the thick bands of high-multiplicity data stretching horizontally across the plot; the $2\times 2$ diamond-shaped chambers; and so on. As you can see from the vertical scale in Figure \ref{fig:2_2_raw_huge}, our computations suggest that nearly all $\L$-invariants of newforms have $p$-adic denominators. We welcome this unexpected phenomenon, since we explained in the prior section that it makes our computations as fast as they could be. But why does it happen? The structures seen in the scatter plot demand explanations.

There are also important properties that cannot be seen in Figure \ref{fig:2_2_raw_huge}. First, for fixed $v$, the multiplicities of $v=v_2(\L)$ are evenly distributed between eigenforms with $a_2(f) = +2^{\frac{k}{2}-1}$ versus eigenforms with $a_2(f) = -2^{\frac{k}{2}-1}$. Second, the plot of Figure \ref{fig:2_2_raw_huge} occurs at level $\Gamma_0(2)$, a level in which the only mod 2 Galois representation is $1 \oplus 1$. If we work with different primes $p$ and different levels $N$, there will be multiple Galois representations appearing. What we see is that the basic qualities of the plots persist both when examining all eigenforms of level $\Gamma_0(Np)$ {\em or} filtering the data according to a fixed Galois representations modulo $p$. It seems natural, then, to dive deeper into filtering the data by a fixed mod $p$ Galois representation.

To explain the perspective on Galois representations, consider a fixed global representation 
\begin{equation*}
\rhobar : \Gal(\Qbar/\Q) \rightarrow \GL_2(\Fpbar),
\end{equation*}
modular of level $\Gamma_0(N)$, and its local restriction  $\rbar = \rhobar|_{\mathrm{Gal}(\Qpbar/\Qp)}$. For simplicity, assume $\rbar$ has only scalar endomorphisms. (See Section \ref{subsec:rbar-loci} for further discussion.) Given $\rbar$, a weight $k \geq 2$, and a sign $\pm$, we consider the set $X_k^{\pm}(\rbar) \subseteq \P^1(\Qpbar)$ consisting of $\L$-invariants that arise from semi-stable, but non-crystalline, lifts of $\rbar$ with weight $k$ and sign $\pm$. The set $X_k^{\pm}(\rbar)$ is the $\Qpbar$-points of the rigid generic fiber of a certain deformation ring, embedded into $\P^1$ by means of the $\L$-invariant.

The global data we gathered represents a finite sampling of $X_k^{\pm}(\rbar)$, as we vary over weights and signs in a fixed level. Thus, the shape of our data is constrained by the sets $X_k^{\pm}(\rbar) \subseteq \P^1(\Qpbar)$. A local-global principle in modularity lifting theorems (Theorem \ref{thm:local-global}) allows us to turn this constraint around and perceive the data itself to be a faithful representation of $X_k^{\pm}(\rbar)$. We first learned of this idea from a survey article of Buzzard and Gee on the slope problem \cite{BG-Survey}.

One notable connection we make in Section \ref{sec:reanalysis} is that the ways in which the structure of our data reflect geometric structures on Emerton and Gee's moduli stack of Galois representations \cite{EG}. Rather than fix $\rhobar$ and study the corresponding $\L$-invariant data, we  treat $\rhobar$ (or better yet, $\rbar$) as a variable over the moduli stack and investigate the extent to which the sets $X_k^{\pm}(\rbar)$ arrange themselves as $\rbar$ varies of the stack.

\subsection{The distributional conjecture}

Finally, we propose a statistical law for $\L$-invariants. It is a close cousin to to Gouv\^ea's distribution conjecture and we propose that both conjectures should be integrated into the circle ideas surrounding the Sato--Tate distribution and its vertical analogue.

The diagonal white bands in Figure \ref{fig:2_2_raw_huge} indicate that for each weight $k$, some values of $v_p(\L)$ are systematically avoided. Despite this pattern, we find the data {\em does} distribute itself evenly after normalizing the plot's triangular shape, even after restricting signs and Galois representations modulo $p$. To set notation, let $S_k(\Gamma_0)^{\pm}$ be the space of cuspforms in $S_k(\Gamma_0)$ that are new at $p$ and and for which $a_p = \pm p^{\frac{k}{2}-1}$.

\begin{conj}\label{conj:intro-distribution}
Let $N$ be an integer co-prime to $p$ and $\Gamma_0 = \Gamma_0(Np)$. Let $\rhobar: \Gal(\Qbar/\Q) \rightarrow \GL_2(\Fpbar)$ be modular of level $\Gamma_0(N)$, and fix a sign $\pm$. Define

\begin{equation*}
\mathbf y_T^{\pm}(\rhobar) = \left\{\frac{2(p+1)}{k(p-1)} \cdot v_p(\L_f) ~\Big|~ \parbox{4.75cm}{\centering $f \in S_k(\Gamma_0)^{\pm}$ is an eigenform,  $k \leq T$, and $\rhobar_f \simeq \rhobar$} \right\}.
\end{equation*}
Then, $\mathbf y_T^{\pm}(\rhobar)$ becomes equidistributed for Lebesgue measure on $[-1,0]$ as $T \rightarrow \infty$.
\end{conj}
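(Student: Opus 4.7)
The plan is to pass from Conjecture~\ref{conj:intro-distribution} to a statistical statement about the variation of $a_p$ along the eigencurve, via the Greenberg--Stevens formula \eqref{eqn:ap}, and then to analyze that variation using the ghost series of \cite{BP-Ghost,BP-Ghost2} together with the proof of the ghost conjecture by Liu--Truong--Xiao--Zhao \cite{LTXZ-Proof}.

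First I would rewrite
\begin{equation*}
v_p(\L_f) = v_p(a_p'(k)) - v_p(a_p(f)) + v_p(2) = v_p(a_p'(k)) - \tfrac{k}{2} + 1 + v_p(2),
\end{equation*}
where $a_p'$ denotes the derivative along the unique family through $f$ on the eigencurve and I used that $a_p(f) = \pm p^{k/2-1}$ for an eigenform $f \in S_k(\Gamma_0)^{\pm}$.  A short computation shows that, under the normalization $\tfrac{2(p+1)}{k(p-1)}$, Conjecture~\ref{conj:intro-distribution} becomes equivalent to the assertion that $v_p(a_p'(k))$ is equidistributed on the affine interval with endpoints $\tfrac{k}{p+1} - 1 - v_p(2)$ and $\tfrac{k}{2} - 1 - v_p(2)$ as $k \to \infty$, filtered by $\rhobar_f \simeq \rhobar$ and by the sign.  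It is already encouraging that the upper endpoint is the slope at the collision and the lower endpoint is Gouv\^ea's support bound $\tfrac{k}{p+1}$ shifted, suggesting a close structural link to Conjecture~\ref{conj:gouvea-intro}.

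Second, I would identify $f$ with a slope collision of the ghost series of $\rhobar$ at weight $k$.  For regular $\rhobar$, points of the eigencurve near $f$ organize into two germs of classical families of tame level $\Gamma_0(N)$, and the slopes of these families as functions of weight meet at height $\tfrac{k}{2} - 1$ at $f$.  The ghost Newton polygon models these slopes, and the derivative $a_p'(k)$ is controlled by the local geometry of the collision, specifically by the $p$-adic distance from $k$ to the ghost zeros adjacent to the colliding segments.  An explicit computation should express $v_p(a_p'(k))$ combinatorially as a sum over those neighboring zeros, analogous to the logarithmic derivative of a product formula.

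Third, I would input statistics of ghost zeros.  The density of ghost zeros along the weight axis already yields Gouv\^ea's conjecture in the regular case by \cite[Corollary 3.3]{BP-Ghost2} and \cite[Theorem 1.21]{LTXZ-Proof}, and the same density heuristic, applied to the pair of ghost zeros straddling a collision point, should yield the uniform measure on $[-1, 0]$ with the predicted normalization.  The main obstacle is precisely this last step:\ it is a second-order statistical statement about ghost zeros, while Gouv\^ea's conjecture only requires the first-order density.  The diagonal bands of white space in Figure~\ref{fig:2_2_raw_huge} show that pairs of ghost zeros straddling a collision are far from independent---parity, congruence, and other arithmetic conditions obstruct certain configurations---so turning these constraints into a quantitative equidistribution, and doing so uniformly in $k$, appears to require genuinely new combinatorial input beyond what is currently available in the ghost literature.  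For irregular $\rhobar$, where no ghost series has been constructed, even this strategy is unavailable, and an entirely different method would be needed.
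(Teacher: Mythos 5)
Your proposal starts exactly where the paper's heuristic (Section~\ref{subsec:heuristic}) does, with the Greenberg--Stevens formula \eqref{eqn:ap}, and your rewriting $v_p(\L_f) = v_p(a_p'(k)) - \tfrac{k}{2} + 1 + v_p(2)$ together with the endpoint identification (lower endpoint $\approx \tfrac{k}{p+1}$, upper endpoint $\approx \tfrac{k}{2}$) is algebraically correct and a good sanity check. You also correctly name the two inputs you will ultimately need: the ghost series as a combinatorial model for slopes, and a strengthened Gouv\^ea statistic. Those are exactly Heuristics~\ref{heuristic:slopesvsL} and~\ref{heuristic:slope-size}, and they are what An's theorem~\cite[Theorem~1.5]{An} uses to actually prove the conjecture modulo the ghost conjecture. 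So the strategy is the right one and, as you correctly anticipate, it is only available for regular $\rhobar$.

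The gap is in your second step, where you propose to extract $v_p(a_p'(k_0))$ directly from the ghost geometry near $k_0$, \emph{``analogous to the logarithmic derivative of a product formula.''} This does not work as stated, for a structural reason: the ghost series is \emph{not} (and is not conjectured to be) the characteristic power series of $U_p$. The ghost conjecture identifies only the \emph{Newton polygons}, i.e.\ the multiset of slopes in each weight. So the ghost series supplies no information about the actual values or coefficients of the $a_p$-function, and one cannot differentiate it to obtain $a_p'(k_0)$. The paper is explicit that the analytic step here is delicate --- there is no reason for the Taylor series of $a_p(k)$ to converge far from $k_0$, and $|a_p|$ must in fact be \emph{constant} on any disc of convergence by Weierstrass preparation. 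The way around this, which is what Heuristic~\ref{heuristic:slopesvsL} does and what An makes rigorous, is not to compute $a_p'(k_0)$ at all: instead, one moves to a $p$-adically nearby weight $k$ with $v_p(k-k_0)$ in the critical range \eqref{eqn:kbound}, watches how the collision slope $\tfrac{k_0-2}{2}$ erodes as $v_p(k-k_0)$ decreases, and reads off $v_p(\L_f)$ from the erosion rate via \eqref{eqn:valL}. Equivalently, one uses the constant-slope-radius theory of \cite{Bergdall-bounds}, which converts ghost-level slope data into a valuation for $\L_f$ without ever touching the coefficients of $a_p(k)$. That is a first-order ghost statistic of exactly the kind you say is available, not a new second-order one. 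Replacing your logarithmic-derivative step with this slope-erosion argument closes the gap and reproduces the paper's derivation of the normalizing constant $\tfrac{2(p+1)}{k(p-1)}$ and the support $[-1,0]$.
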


When $p = 2$, the normalization in Conjecture \ref{conj:intro-distribution} is by $\frac{6}{k}$. (The slope in Figure \ref{fig:2_2_raw_huge} is about $-\frac{1}{6}$.)  Figure \ref{fig:2_2_dist_huge} presents a scatter plot and histogram to support Conjecture \ref{conj:intro-distribution}. In level $\SL_2(\Z)$, the only mod $p=2$ Galois representation is $\rhobar = 1 \oplus 1$. We do not filter by the sign $\pm$ in the plot itself. See Section \ref{subsec:plot-dist} for more plots.

\begin{figure}[htbp]
\centering
\includegraphics[scale=.75]{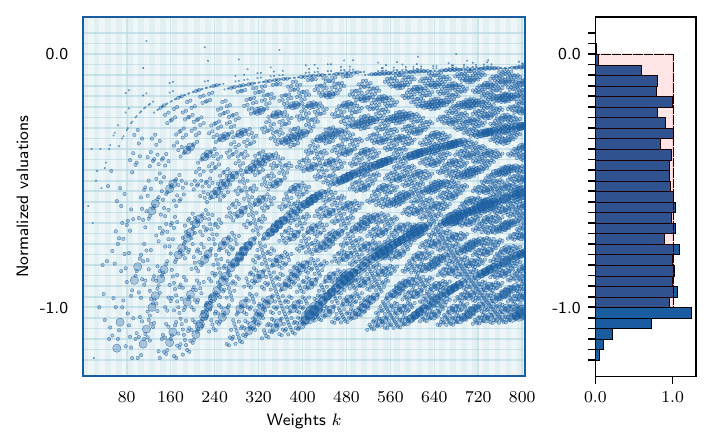}
\caption{Left:\ Normalized $\frac{6}{k}v_2(\L_f)$ in level $\Gamma_0(2)$ for $10\leq k\leq 804$.\\Right:\ Histogram of normalized slopes. (Total data:\ 13465 eigenforms. Num.\ bins = 31.)}
\label{fig:2_2_dist_huge}
\end{figure}

Figure \ref{fig:2_2_dist_huge} may make the reader nervous, as there seems to be significant bias away from $v_2(\L)=0$ and towards $v_2(\L)=-1$. (In Gouv\^ea's distribution, the bias seem to present itself symmetrically at both ends of the distribution.) We found the same bias occurring in all contexts, except the magnitude of the bias seems to decrease as $p$ increases. In an initial defense of Conjecture \ref{conj:intro-distribution}, it does seem that the bias in Figure \ref{fig:2_2_dist_huge} is disappearing as $k\rightarrow +\infty$. A better defense is that we can also support Conjecture \ref{conj:intro-distribution} with a heuristic that {\em predicts} the equidistribution phenomenon. In fact, as the scientific method demands, we used our heuristic to derive the normalizing factor $\frac{2(p+1)}{p-1}$ in Conjecture \ref{conj:intro-distribution} long before major data gathering efforts were carried out. The heuristic is explained in Section \ref{subsec:heuristic}. 

Due to good timing, we can even provide near to the {\em best} defense. While finishing the drafts of this article, we learned that Jiawei An has proven Conjecture \ref{conj:intro-distribution} (excluding the refinement on the sign of $a_p$) whenever the authors' ghost conjecture holds (for instance, by \cite{LTXZ-Proof}, under the hypotheses of Theorem \ref{thm:LTXZ-application}).  See \cite[Theorem 1.5]{An}. An's method is to assume the ghost conjecture holds and (a) relate the valuation of the $\L$-invariant of a newform to the size of the maximal constant slope family passing through that form as partially predicted in \cite{Bergdall-bounds}, and (b) prove that the sizes of these families are distributed as predicted by Conjecture \ref{conj:intro-distribution}.

\subsection{Final questions}

The data we gathered continues to astound us. At the end of the article (see Section \ref{sec:other}) we raise three points that we hope will lead to future investigations. In order:
\begin{enumerate}
\item What is going on with the horizontal masses of data appearing in Figure \ref{fig:2_2_raw_huge}? These horizontal bands of high-multiplicity values for $v_p(\L_f)$ are unmistakable in that data set, along with all the others we compiled.
\item The exceptional zero formula relates $\L$-invariants to the ratio of algebraic central values and $p$-adic central derivatives. What does the distribution of $\L$-invariants in Conjecture \ref{conj:intro-distribution} say about the ($p$-adic!) distribution of those numbers?
\item From the Galois-theoretic perspective, one of the interesting phenomena revealed by our data set is that $v_p(\L_f)$ is typically a whole number, provided $f$ is a so-called regular eigenform. That is not {\em completely} true, as we explain at the end of the article.
\end{enumerate}

One final question we leave unanswered is how to understand Conjecture \ref{conj:intro-distribution} and Conjecture \ref{conj:gouvea-intro} as common instances of the same principle. As explained in Section \ref{subsec:Linv-intro} above, $p$-adic questions about $a_p$ or $\L$-invariants can be simultaneously realized within the context of Galois representations. An interesting project would be to come up with an {\em a priori} reason for the normalizing constant $\frac{p+1}{k}$ or $\frac{2(p+1)}{k(p-1)}$ in either case. Indeed, in the Sato--Tate conjecture, its vertical analogue, or their generalizations, there is an entire conceptual apparatus producing measures for equidistribution.  It would be fascinating to find a structure on deformation spaces of Galois representations that would explain the statistical normalizations.
We note that in Section \ref{subsec:heuristic} we {\em do} link the normalizing constants to each other by explaining how to derive the constant in Conjecture \ref{conj:intro-distribution} from that of Conjecture \ref{conj:gouvea-intro}. We just do not give an independent explanation for either normalization.

\subsection{Article organization}

The body of the article is organized into eight sections. In Sections \ref{sec:background}-\ref{sec:summary} we explain background on modular symbols and $p$-adic $L$-functions and the practical method for calculating $\L$-invariants via the exceptional zero method. In Section \ref{sec:data}, we present the raw data on $\L$-invariants and discuss many of the features mentioned above. Section \ref{sec:theory} recalls the definition of the $\L$-invariant in terms of Galois representations and more generally introduces Galois-theoretic perspectives. Section \ref{sec:reanalysis} then reanalyzes the data from that perspective. In Section \ref{sec:conjecture} we introduce and study the distributional Conjecture \ref{conj:intro-distribution}. Finally, in Section \ref{sec:other} we raise a few questions for future research.

\subsection{Acknowledgments}
We thank Kevin Buzzard, Toby Gee, Peter Gr\"af, and Jiawei An for discussions and comments related to this research. We especially thank An for sending us a draft  of his preprint \cite{An}.

The research reported on here was carried out, partially, while the authors visited the Max-Planck Institute for Mathematics in Bonn, Germany. The staff and members of the institute are warmly thanked for their support and hospitality. Bergdall's research has been partially supported by Simons Foundation Travel Support Grant for Mathematicians MPS-TSM-00713782 and National Science Foundation grant DMS-2302284. Pollack's research has been partially supported by National Science Foundation grants DMS-1702178 and DMS-2302285 and by Simons Foundation Travel Support Grant for Mathematicians MPS-TSM-00002405.

\section{Background}\label{sec:background}

In this section we briefly recall modular symbols, $p$-adic $L$-functions, central values of $L$-functions, and the exceptional zero conjecture.

\subsection{Notations}

We fix an integer $N \geq 1$ and a prime number $p \nmid N$. We write $\Gamma_0 = \Gamma_0(Np)$ and $S_k(\Gamma_0)$ for the space of cuspforms of level $\Gamma_0$. A newform $f \in S_k(\Gamma_0)$ is thus a newform of level $Np$.  

We fix an isomorphism $\iota: \C \simeq \Qpbar$, thus allowing us to view an eigenform as defined over $\Qpbar$. On $\Qpbar$, we also fix a $p$-adic valuation $v_p$ such that $v_p(p) = 1$. We always measure $p$-adic numbers with $v_p(-)$. If we apply $v_p(-)$ to a complex number, we mean to implicitly apply $\iota$ first.

\subsection{Modular symbols and $p$-adic $L$-functions}\label{subsec:modular-symbols}
Attached to a modular form $f \in S_k(\Gamma_0)$, we have modular integrals
$$
\phi_f(P,r) := 2 \pi i \int_{\infty}^r f(z) P(z) \dz
$$
where $r \in \Q$ and $P(z)$ is an polynomial over $\Z$ of degree at most $k-2$ (as in \cite{MTT}).  The modular integrals of an eigenform $f$ may be decomposed into plus and minus parts and renormalized to take algebraic values:
$$
\eta_f^\pm(P,a,m) := \frac{\phi_f(P(z),a/m)) \pm \phi_f(P(\pm z),\pm a/m)}{\Omega_f^\pm} \in \Qbar.
$$
Here $\Omega_f^\pm \in \C^\times$ are normalized in the following sense:\  we insist $v_p(\eta_f^{\pm}(P,a,m))$ is always non-negative, and for some choice of $P$, $a$, and $m$, this valuation vanishes.  We will refer to $\Omega_f^\pm$ as normalized periods.

Set $\eta_f = \eta_f^+ + \eta_f^-$. The modular symbol attached to $f$ is $\lambda_f$, defined by
$$
\lambda_f(P,a,m) := \eta_f( P(mz+a), -a, m).
$$
This modular symbol is readily computed by computer algebra systems such as Magma, Pari-GP and Sage.  (We implemented certain calculations in Magma \cite{magma}. See Section \ref{sec:summary}.) 

The $p$-adic $L$-function is built from the modular symbol.  Namely, if $f$ is new at $p$, set
\begin{equation}
\label{eqn:mu}
\mu_f(P,a,p^n) := a_p(f)^{-n} \cdot \lambda_f(P,a,p^n).
\end{equation}
It is a theorem that there exists a unique locally analytic distribution $\mu_f$ on $\Zpx$ with the property that 
\begin{equation}
\label{eqn:periods}
\int_{a+p^n\Zp} P(x) \d\mu_f(x) = \mu_f(P,a,p^n)
\end{equation}
for all $P(z)$ of degree less than or equal to $k-2$. The distribution $\mu_f$ {\em is} the $p$-adic $L$-function of $f$. 

We note that $p$-adic $L$-functions can also be defined for eigenforms that are old at $p$, but in that case one also needs to choose a root of the Hecke polynomial $x^2-a_p(f)x+p^{k-1}$.  When this root has valuation strictly less than $k-1$, then the construction of $\mu_f$ follows the same approach except that the definition of $\mu_f$ in \eqref{eqn:mu} is slightly different and depends upon this choice of root.  We will not discuss $p$-adic $L$-functions of forms that are old at $p$ any further.

Let us describe the $p$-adic $L$-function in the  ``$s$-variable". First, write each $x \in \Z_p$ uniquely as $x = \langle x \rangle \omega(x)$, where $\langle x \rangle \in 1 + p\Z_p$ and $\omega(x)$ is a $(p-1)$-st root of unity. Then, for a tame character $\psi :\Zpx \to \Fpx \to \Qpbar^\times$, we set
$$
L_p(f,\psi,s) := \int_{\Zpx} \psi(x) \langx^{s-1} \d\mu_f(x).
$$
This is the ``$\psi$-branch'' of the $p$-adic $L$-function and here $\psi$ has $p-1$ choices:\ $1$, $\omega$, $\omega^2$, $\dots$, $\omega^{p-2}$. Nonetheless, for $j$ an integer, we often abuse notation and simply write
$$
L_p(f,j) = \int_{\Zpx} x^{j-1} \d\mu_f(x)
$$
without mentioning a specific branch.  Note though that 
$$
\int_{\Zpx} x^{j-1} \d\mu_f(x) = \int_{\Zpx} \omega(x)^{j-1} \langx^{j-1} \d\mu_f(x) = L_p(f,\omega^{j-1},j)
$$
and thus $L_p(f,j)$ is the value at $j$ on the $\omega^{j-1}$-branch.  

We also drop the branch in derivatives of the $p$-adic $L$-function:\
\begin{equation}\label{eqn:derivative-derivation}
L_p'(f,j) := \frac{d}{ds}\bigg\vert_{s=j} \left( \int_{\Zpx} \omega(x)^{j-1} \langx^{s-1} d\mu_f(x)
\right).
\end{equation}
In Section \ref{sec:estimate-derivative}, we further compute \eqref{eqn:derivative-derivation} when $f$ has weight $k$ and $j=k/2$.

We close this subsection with a fact about the valuation of certain values of $\mu_f$, a fact which will be key in providing error estimates of our approximations of $p$-adic $L$-functions.

\begin{lemma}
\label{lemma:betterMTT}
Let $f \in S_k(\Gamma_0)$ be an eigenform that is new at $p$. For $m \geq 0$, we have
$$
v_p \left( \int_{a+p^n\Zp} (x-a)^m \dmuf \right) \geq n\left(m - \fk\right).
$$
\end{lemma}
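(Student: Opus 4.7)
The plan is to unwind the definition of $\mu_f$ in terms of classical modular symbols and then exploit two inputs:\ the $p$-integrality built into the normalization of $\eta_f^{\pm}$, and the $p$-newness of $f$, which pins down $v_p(a_p(f))$ exactly.

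First I would use the defining relation \eqref{eqn:periods} and the construction \eqref{eqn:mu} to rewrite
\[
\int_{a + p^n \Z_p} (x-a)^m \, d\mu_f(x) \;=\; \mu_f\bigl((x-a)^m, a, p^n\bigr) \;=\; a_p(f)^{-n} \cdot \lambda_f\bigl((x-a)^m, a, p^n\bigr).
\]
Then I would apply the definition $\lambda_f(P,a,p^n) = \eta_f(P(p^n z + a),-a,p^n)$ to the polynomial $P(x) = (x-a)^m$. This substitution cleanly yields $P(p^n z + a) = (p^n z)^m = p^{nm} z^m$, so that
\[
\lambda_f\bigl((x-a)^m, a, p^n\bigr) \;=\; p^{nm} \cdot \eta_f(z^m, -a, p^n).
\]

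The second step is to invoke the two quantitative inputs. On one hand, $\eta_f = \eta_f^+ + \eta_f^-$ is a $\Qbar$-linear combination of the renormalized modular integrals $\eta_f^{\pm}$; by the convention imposed in Section~\ref{subsec:modular-symbols}, the values of $\eta_f^{\pm}$ are $p$-adically integral, so $v_p(\eta_f(z^m, -a, p^n)) \geq 0$. On the other hand, since $f$ is a newform of level $\Gamma_0(Np)$, the local Hecke eigenvalue satisfies $a_p(f) = \pm p^{k/2 - 1}$, so $v_p(a_p(f)^{-n}) = -n(k-2)/2 = -n \cdot \fk$.

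Combining these gives
\[
v_p\left( \int_{a + p^n \Z_p} (x-a)^m \, d\mu_f(x) \right) \;=\; -n\cdot\fk + nm + v_p\bigl(\eta_f(z^m,-a,p^n)\bigr) \;\geq\; n\bigl(m - \fk\bigr),
\]
as desired. There is essentially no obstacle here beyond bookkeeping:\ the computation relies on the fortuitous factor of $p^{nm}$ that drops out of the substitution $z \mapsto p^n z + a$ in the polynomial $(x-a)^m$, together with the exact valuation of $a_p(f)$ for $p$-new forms, which is what makes the estimate tight in the regime of interest.
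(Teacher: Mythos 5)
Your computation is correct, and it isolates exactly the right quantitative input (that $v_p(a_p(f)) = \frac{k}{2}-1$ exactly, rather than $\frac{k}{2}-1$ being only a lower bound as in the old-at-$p$ case), but it only establishes the bound for $m \leq k-2$, which is not the range where the lemma is actually applied. The relation \eqref{eqn:periods} asserts $\int_{a+p^n\Zp}P(x)\,d\mu_f = \mu_f(P,a,p^n)$ only for polynomials $P$ of degree at most $k-2$; for $m > k-2$, the quantities $\mu_f((x-a)^m,a,p^n)$, $\lambda_f((x-a)^m,a,p^n)$, and $\eta_f(z^m,-a,p^n)$ are literally undefined, because modular integrals $\phi_f(P,r)$ and the normalized symbols $\eta_f^\pm$ only make sense (and enjoy their $\Gamma_0$-transformation and distribution properties) for coefficient polynomials of degree at most $k-2$. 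In the proof of Theorem~\ref{thm:error_bound}, the lemma is invoked precisely for $m \geq k-1$, so your argument misses the entire regime of interest.

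For $m > k-2$ the integral $\int_{a+p^n\Zp}(x-a)^m\,d\mu_f$ is defined via the locally analytic extension of $\mu_f$: one decomposes $a+p^n\Zp$ into finer balls $b+p^N\Zp$, pairs against the degree-$(k-2)$ truncated Taylor expansions $\TS_b((x-a)^m)$, and takes $N\to\infty$. The individual Riemann-sum terms do not each satisfy the claimed bound (the $j=0$ term in $\TS_b$ contributes roughly $nm - N\cdot\frac{k-2}{2}$, which is unbounded below as $N\to\infty$), so one needs the standard Amice--V\'elu/Vishik telescoping argument to show the partial sums form a Cauchy sequence and that the limit inherits the bound from the $N=n$ stage. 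This is what the paper means by ``tracing through the proof of \cite[p.~13, III]{MTT} using this stronger fact.'' Your direct computation for $m \leq k-2$ is the correct base input to that argument --- it is exactly where the improvement from $\alpha^{n+1}$-integrality (old-at-$p$) to $\alpha^n$-integrality ($p$-new) enters --- but you still need to run the extension argument to conclude for $m > k-2$.
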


\begin{proof}
This divisibility is almost \cite[pg.\ 13, III]{MTT}, but in that reference a lower bound of $n\left(m - \fk\right) -\fk$ is given.  However, in our situation of a form whose level is divisible by $p$ exactly once, we can achieve the slightly stronger bound of this lemma. Indeed, if $f$ had level prime-to-$p$ and $\alpha$ was a root of the Hecke polynomial of $f$ at $p$, then the very definition of $\mu_f$ yields that $\alpha^{n+1} \mu_f(P,a,p^n)$ is integral.  However, in our case, (\ref{eqn:mu}) implies that $\alpha^n \mu_f(P,a,p^n)$ is integral where $\alpha =  a_p(f) = \pm p^{\frac{k}{2}-1}$. Tracing through the proof of \cite[pg.\ 13, III]{MTT} using this stronger fact yields the claim of this lemma.
\end{proof}

\subsection{Central values and quadratic twists}\label{subsec:centralvalues-quadratictwists}
For $1 \leq j \leq k-1$, set
$$
L_\infty(f,j) = \eta_f(z^j,0,1).
$$
Thus $L_\infty(f,j) \in \Qbar$, which is viewed in $\Qpbar$ via $\iota$. It is called an algebraic special value, the most important of which is the {\em central} value at $j = k/2$. The following exceptional zero formula expresses the $\L$-invariant of an eigenform in terms of the central values of its $p$-adic $L$-function and complex $L$-series.

\begin{thm}\label{thm:ez-theorem}
Let $f \in S_k(\Gamma_0)$ be an eigenform that is new at $p$. Then, there exists $\L_f \in \Qpbar$ with the following two properties.
\begin{enumerate}
\item \label{part:twist} If $\chi$ is a quadratic Dirichlet character such that $\chi(p) = 1$, then $\L_{f} = \L_{f_\chi}$. (Here, $f_\chi$ is the quadratic twist of $f$ by $\chi$.)
\item If $a_p(f) = +p^{\frac{k}{2}-1}$, then $L_p(f,k/2) = 0$ and
\begin{equation}
\tag{EZ}\label{thm:EZ}
L_p'(f,k/2) = \L_f \cdot L_\infty(f,k/2).
\end{equation}
\end{enumerate}
\end{thm}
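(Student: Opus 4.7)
The plan is to adopt the Fontaine--Mazur definition of $\L_f$, to be recalled in Section \ref{subsec:Linvs-local-Galois}. When $f$ is new at $p$ with $a_p(f) = \pm p^{(k-2)/2}$, the local representation $r_f = \rho_f|_{\Gal(\Qpbar/\Qp)}$ is semistable but not crystalline, so its filtered $(\varphi,N)$-module $D_{\st}(r_f)$ carries a canonical Frobenius decomposition; then $\L_f \in \Qpbar$ arises as the scalar measuring the position of the Hodge filtration relative to the unit-root line.

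Part (1) is formal from this viewpoint. A quadratic Dirichlet character $\chi$ with $\chi(p) = 1$ must be unramified at $p$ (its conductor is coprime to $p$) and sends Frobenius to $1$, so $\chi|_{\Gal(\Qpbar/\Qp)}$ is the trivial character. Hence $r_{f_\chi} \cong r_f \otimes \chi|_{\Gal(\Qpbar/\Qp)} = r_f$, and since $\L_f$ depends only on the isomorphism class of $r_f$, we conclude $\L_{f_\chi} = \L_f$.

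For part (2), the vanishing $L_p(f, k/2) = 0$ falls out of the interpolation defining $\mu_f$:\ the central specialization acquires a correction factor $(1 - p^{k/2-1}/a_p(f))$, which vanishes exactly when $a_p(f) = +p^{(k-2)/2}$. The derivative formula \eqref{thm:EZ} carries the deep content, and my plan is to follow the Greenberg--Stevens/Kato--Kurihara--Tsuji--Colmez strategy. Embed $f$ in a Coleman family $\{f_\kappa\}$ on the eigencurve with $f_k = f$, and construct a two-variable $p$-adic $L$-function $L_p(\kappa, s)$ interpolating the one-variable functions $L_p(f_\kappa, s)$. By interpolation, $L_p(\kappa, \kappa/2)$ is divisible by the analytic family of correction factors $1 - a_p(\kappa)^{-1} p^{\kappa/2-1}$, which has a simple zero at $\kappa = k$. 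Differentiating this factor along the family and comparing it against the cyclotomic derivative via the two-variable functional equation then yields the intermediate identity
\[
L_p'(f, k/2) = -2\,\frac{a_p'(k)}{a_p(f)} \cdot L_\infty(f, k/2).
\]

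The main obstacle is the Colmez identification $\L_f = -2\, a_p'(k)/a_p(f)$, which bridges the purely local Fontaine--Mazur invariant with the global derivative along the Coleman family. This step is where $p$-adic Hodge theory enters essentially:\ one must track how the filtered $(\varphi, N)$-module structure of $r_{f_\kappa}$ degenerates, via a $(\varphi, \Gamma)$-module analysis, from crystalline for generic classical $\kappa$ near $k$ to semistable non-crystalline at $\kappa = k$. Rather than attempt to reprove this deep input in families, I would invoke Colmez's treatment in \cite{Colmez-Bourbaki} and quote the resulting identification to complete \eqref{thm:EZ}.
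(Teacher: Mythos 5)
Your proposal is correct and takes essentially the same approach as the paper: define $\L_f$ via Fontaine--Mazur so that part (1) is formal from the locality of the invariant and the local triviality of $\chi$, and defer the exceptional zero formula to Colmez's Bourbaki survey. The only cosmetic difference is that your intermediate sketch follows the Greenberg--Stevens two-variable route plus Colmez's $\L_f = -2a_p'(k)/a_p(f)$ identification, whereas the paper directly cites the Kato--Kurihara--Tsuji proof (Th\'eor\`eme 4.16 of the same survey); both arguments appear in that reference and lead to the same conclusion.
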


\begin{proof}
Colmez's Bourbaki survey on the $p$-adic Birch and Swinnerton-Dyer conjecture (\cite{Colmez-Bourbaki}) is one place to find a proof of this result. Specifically, in Section 4.6 of {\em op.\ cit.}, one finds the definition of the Fontaine--Mazur $\L$-invariant $\L_f$, which satisfies part (1) more or less by construction. (See Section \ref{subsec:Linvs-local-Galois}.) Then part (2) is proven in Th\'eor\`eme 4.16 of {\em op.\ cit.} The proof of (2) is due, in this context, to Kato, Kurihara, and Tsuji. (The careful reader will notice Colmez uses $f^{\ast}$ for the complex conjugate of $f$, which is just $f$ in our setting because $f$ has level $\Gamma_0$ and thus has real $q$-series coefficients.)
\end{proof}

\begin{remark}
The formulation of \eqref{part:twist} requires letting the tame level $N$ change.
\end{remark}

The obstructions to calculating  $\L$-invariants using \eqref{thm:EZ} are that the complex $L$-value  could vanish and $a_p(f)$ could be negative ({\it i.e.}\ equal to $-p^{\frac{k}{2}-1}$). However, by \eqref{part:twist}, we can always simultaneously twist ourselves out of these two situations.  Indeed, if $\chi$ is a quadratic Dirichlet character unramified at $p$, then $f_\chi$  is also an eigenform of level $\Gamma_0$, except possibly with a different tame level and $a_p(f_\chi) = \chi(p)a_p(f)$.    The following proposition guarantees the existence of a twist of $f$ whose $p$-th Fourier coefficient is positive and whose central $L$-value is non-zero. 

\begin{prop}\label{prop:twist-our-worries-away}
For each eigenform $f$ of level $\Gamma_0$ and weight $k\geq 2$, there exists a quadratic Dirichlet character $\chi$ unramified at $p$ such that $a_p(f_\chi) = +p^{\frac{k}{2}-1}$ and $L_\infty(f_\chi,k/2) \neq 0$.
\end{prop}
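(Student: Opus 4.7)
The plan is to find a quadratic Dirichlet character $\chi$ unramified at $p$ satisfying two conditions simultaneously: first, $\chi(p) = \epsilon$, where $\epsilon \in \{\pm 1\}$ is defined by $a_p(f) = \epsilon p^{(k-2)/2}$; second, $L(f_\chi, k/2) \neq 0$. The first condition forces $a_p(f_\chi) = \chi(p) a_p(f) = +p^{(k-2)/2}$ since $f_\chi$ is still new at $p$, and the second is equivalent to $L_\infty(f_\chi, k/2) \neq 0$, as the latter is just the algebraic normalization of the complex central value by $\Omega_{f_\chi}^{+}$. Observe also that $k$ is automatically even (since $-I \in \Gamma_0$ forces $S_k(\Gamma_0) = 0$ for odd $k$), so $s = k/2$ is a genuine critical point of $L(f_\chi, s)$.

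The key input is a non-vanishing theorem for central values of quadratic twists, with prescribed local behavior at a finite set of primes. Classical results of Waldspurger, Bump--Friedberg--Hoffstein, and Murty--Murty guarantee that for any cuspidal newform $f$ of even weight $k \geq 2$ and trivial nebentype, there exist infinitely many fundamental discriminants $D$ with $L(f \otimes \chi_D, k/2) \neq 0$. These results are robust enough to allow $D$ to be constrained to any prescribed congruence class modulo a fixed integer. In particular, one can restrict to $D$ coprime to $p$ (so $\chi_D$ is unramified at $p$) and lying in the congruence class modulo $p$ characterized by $\chi_D(p) = \epsilon$, a subfamily of positive density within all fundamental discriminants. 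Pick any such $D$ and set $\chi = \chi_D$.

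The main obstacle is the non-vanishing theorem itself, which is a deep analytic statement about averages over twist families. However, the local refinement at $p$ comes essentially for free in the standard proofs: whether one uses mollified moments over quadratic twist families or Waldspurger's formula relating central values to Fourier coefficients of half-integral weight forms on Kohnen's plus space, the analytic arguments can be carried out over discriminants $D$ in any fixed residue class modulo $p$ without loss. Thus, once we have invoked non-vanishing with the freedom to fix $\chi(p)$, the proposition follows immediately.
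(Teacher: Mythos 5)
Your overall strategy matches the paper's: find a fundamental discriminant $D$ so that $\chi_D(p)$ flips the sign of $a_p$ as needed, and then invoke a quadratic-twist non-vanishing theorem (the paper cites Bump--Friedberg--Hoffstein, the same result you lean on). But there is a real gap in the way you invoke the non-vanishing result, and it is exactly the point the paper handles carefully.

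You assert that the results of Waldspurger, BFH, and Murty--Murty are ``robust enough to allow $D$ to be constrained to any prescribed congruence class modulo a fixed integer.'' This is false as stated: if the prescribed local conditions on $D$ force the sign of the functional equation of $f_{\chi_D}$ to be $-1$, then $L(f_{\chi_D}, k/2)$ vanishes for \emph{every} such $D$, and no non-vanishing theorem can rescue you. Since twisting by $\chi_D$ multiplies the root number $w_f$ by $\chi_D(-Np)$, the constraint you impose ($\chi_D(p) = \epsilon$) leaves the root number uncontrolled; you must additionally constrain $D$ (through its sign and its residue modulo $N$, both of which remain free) so that $\chi_D(-Np) = w_f$, thereby forcing an even functional equation. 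Only within that even-root-number subfamily does the BFH theorem produce infinitely many $D$ with $L(f_{\chi_D}, k/2) \neq 0$. The paper's proof writes this requirement down explicitly as a system of two conditions on $D$, one for $\chi_D(p)$ and one for $\chi_D(-Np)$, checks solvability, and then applies BFH. Your proof needs this same intermediate step; as written, it jumps from the $p$-local constraint directly to non-vanishing, which does not hold in the generality you claim.
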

\begin{proof}
Let $w_f$ be the sign of the functional equation of $f$.
For a fundamental discriminant $D$, set 
\begin{equation*}
\chi_D(\cdot) = \left(\frac{D}{\cdot}\right).
\end{equation*}
 Twisting by $\chi_D$ changes the sign of the functional equation for $f$ by $\chi_D(-Np)$ (see \cite[(5.9)]{Bump}).  

We first seek a twist of $f$ having $+1$ as the sign of its functional equation and having $+p^{k/2-1}$ as its $U_p$-eigenvalue.  Said another way, we need a $D$ co-prime to $N$ such that
\begin{equation}
\label{eqn:sign}
\chi_D(-Np)=w_f \text{~~and~~} \chi_D(p) =\text{sign~of~} a_p(f).
\end{equation}
For any $N$, we can find infinitely many $D$ that satisfy (\ref{eqn:sign}), and for any such $D$, the twist $f_{\chi_D}$ has positive $a_p$. Then, among the infinitely many choices of $D$, there is one for which $L_\infty(f_{\chi_D},k/2) \neq 0$ by the main theorem of \cite{BFH}.
\end{proof}

In practice, to calculate $\L$-invariants, we first  calculate the modular symbols $\lambda_f$ for $\Gamma_0$-eigenforms $f$. This is a rather expensive step.  Luckily, if we ever need to apply Proposition \ref{prop:twist-our-worries-away}, we do not need to re-do an expensive calculation. Indeed, the modular symbol for $f_{\chi}$ can be readily computed from the symbol for $f$, by following formula (see \cite[(8.5)]{MTT}):\ if $\chi$ has conductor $D$, then
\begin{equation}\label{eqn:birch-stevens-twist}
\lambda_{f_\chi}(P(Dz),a,m) = \frac{1}{\tau(\chi)}  \sum_{b \bmod D} \chi(b) \cdot \lambda_f(P,Da-mb,Dm).
\end{equation}
This incidentally explains how to search for a $\chi$ satisfying Proposition \ref{prop:twist-our-worries-away}:\  calculate $\lambda_{f_{\chi}}(z^{k/2},0,1)$ for $D$ satisfying (\ref{eqn:sign}) until it does not vanish.

\section{Estimating the $p$-adic derivative  $L_p'(f,k/2)$}\label{sec:estimate-derivative}

In this section, we construct a sequence $\{L_n\}$ of explicit $p$-adic approximations for $L_p'(f,k/2)$. See also \cite[Proposition 3.1]{SW}, which treats the ordinary ($k=2$) case.

\begin{thm}
\label{thm:error_bound}
There exists an explicit sequence $\{L_n\}$  of linear combinations of period integrals of $f$ such that
$$
v_p(L'_p(f,k/2)-L_{n}) \geq \frac{nk}{2} - \left\lfloor \frac{\log (k-1)}{\log(p)} \right\rfloor. 
$$
\end{thm}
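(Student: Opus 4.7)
The plan is to derive an explicit integral formula for $L_p'(f,k/2)$, partition $\Zpx$ into balls of radius $p^{-n}$, expand the integrand in a Taylor series on each ball, and control the resulting truncation error via Lemma \ref{lemma:betterMTT}.

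I would first differentiate under the integral in \eqref{eqn:derivative-derivation}. Because $\omega(x)$ is a root of unity, $\log_p(\langle x\rangle) = \log_p(x)$, and so
$$L_p'(f,k/2) = \int_{\Zpx} x^{k/2-1} \log_p(x) \d\mu_f(x).$$
Next I would partition $\Zpx = \bigsqcup_a (a + p^n\Zp)$ where $a$ ranges over a choice of lifts of the units in $\Z/p^n\Z$. On each such ball, $v_p((x-a)/a) \geq n \geq 1$, so the series
$$\log_p(x) = \log_p(a) + \sum_{m \geq 1} \frac{(-1)^{m-1}}{m a^m}(x-a)^m$$
converges uniformly, allowing an interchange of integration and summation against $\mu_f$.

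I would then \emph{define}
$$L_n := \sum_a \log_p(a) \int_{a + p^n\Zp} x^{k/2-1}\d\mu_f(x) + \sum_a \sum_{m=1}^{k-2} \frac{(-1)^{m-1}}{m a^m} \int_{a + p^n\Zp} x^{k/2-1}(x-a)^m\d\mu_f(x),$$
as the truncation of the above expansion at $m \leq k - 2$. Each integrand is a polynomial in $x$ of degree at most $3k/2 - 3$, so each integral is a finite $\Z$-linear combination of modular symbol values via \eqref{eqn:periods}, while $\log_p(a)$ is a $p$-adic number computable to any desired accuracy. The error is the tail
$$L_p'(f,k/2) - L_n = \sum_a \sum_{m \geq k-1} \frac{(-1)^{m-1}}{m a^m}\int_{a + p^n\Zp} x^{k/2-1}(x-a)^m \d\mu_f(x).$$

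To bound each tail term I would expand $x^{k/2-1} = \sum_{j=0}^{k/2-1}\binom{k/2-1}{j} a^{k/2-1-j}(x-a)^j$, making $x^{k/2-1}(x-a)^m$ a $\Z$-linear combination of $(x-a)^{j+m}$ with $0 \leq j \leq k/2-1$. Lemma \ref{lemma:betterMTT} then gives $v_p(\int (x-a)^{j+m}\d\mu_f) \geq n(j+m - (k-2)/2)$, minimized at $j=0$ and equal to $n(m - (k-2)/2)$. Combined with $v_p(1/(m a^m)) = -v_p(m)$ (since $a \in \Zpx$), each $m$-th tail term has valuation at least $nm - n(k-2)/2 - v_p(m)$.

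Finally, the remaining ingredient is the elementary minimization: for every $m \geq k-1$,
$$nm - v_p(m) \geq n(k-1) - \lfloor \log(k-1)/\log(p)\rfloor,$$
which combined with the previous bound yields the theorem. When $m = k-1$, this follows immediately from $p^{v_p(k-1)} \leq k-1$. When $m \geq k$, it reduces to $v_p(m) - \lfloor \log(k-1)/\log(p)\rfloor \leq n(m-(k-1))$, which I would verify from $v_p(m) \leq \log(m)/\log(p)$ using that $p^{\lfloor \log(k-1)/\log(p)\rfloor + 1} \geq k$ and that the relevant quantity is monotone in $m$. The main obstacle is really the choice of the truncation level $M = k-2$: it must be small enough that $L_n$ is a tractable finite sum of modular symbol values and large enough that the invocation of Lemma \ref{lemma:betterMTT} governs the tail — this is exactly the threshold balancing $n(m-(k-2)/2)$ against the desired bound $nk/2-\lfloor \log(k-1)/\log(p)\rfloor$.
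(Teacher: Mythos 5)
Your strategy is the paper's: rewrite $L_p'(f,k/2)=\int_{\Zpx}x^{k/2-1}\log_p\langx\dmuf$, Taylor-expand on each ball $a+p^n\Zp$, truncate, and control the tail by combining Lemma \ref{lemma:betterMTT} with a bound on the denominators introduced by $\log_p$. However, there is a real gap in how you define $L_n$. You truncate only the logarithm's series and leave the factor $x^{k/2-1}$ outside the truncation, so your approximant involves the integrals $\int_{a+p^n\Zp}x^{k/2-1}(x-a)^m\dmuf$ for $m$ up to $k-2$; as soon as $m > k/2 - 1$, the integrand has degree $k/2-1+m > k-2$. But \eqref{eqn:periods} applies only to polynomials of degree at most $k-2$, so for those $m$ the integral is \emph{not} a modular symbol value: it is a value of the extended locally analytic distribution, which is itself defined by the truncate-and-take-limits process. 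Your assertion that ``each integral is a finite $\Z$-linear combination of modular symbol values via \eqref{eqn:periods}'' is therefore false for those terms, and consequently your $L_n$ is not an explicit finite linear combination of period integrals of $f$ — which is exactly the thing the theorem asserts you have.

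The correct construction, which is the paper's, is to Taylor-expand the \emph{entire} product $g(x)=x^{k/2-1}\log_p\langx$ around each $a$ and truncate \emph{that} series at degree $k-2$, defining $L_n = \sum_a \mu_f(\TS_a(g),a,p^n)$. Then every polynomial fed to $\mu_f$ genuinely has degree $\leq k-2$ and the period-integral claim holds. Everything else in your argument transfers verbatim: the product's Taylor coefficients $c_{m,a}$ are $\Zp$-linear combinations of your $b_{\ell,a}$ with $\ell\leq m$, so Lemma \ref{lemma:taylor_coefs} gives $v_p(c_{m,a}) \geq -\lfloor \log m/\log p\rfloor$, playing precisely the role of your bound $-v_p(m)$; the tail is now $\sum_a\sum_{m\geq k-1} c_{m,a}\int_{a+p^n\Zp}(x-a)^m\dmuf$; and the minimization over $m\geq k-1$ that you sketch is the paper's final step. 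In short, your error estimate is sound, but the object being estimated against needs to be repaired by truncating the whole product rather than just the logarithm.
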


Note that taking $n=1$ in Theorem \ref{thm:error_bound} yields a bound around $\frac{k}{2}$. Thus, for all but the smallest weights, $L_1$ is already a good approximation of $L_p'(f,k/2)$. 

The remainder of the section is devoted to constructing the $L_n$ and proving Theorem \ref{thm:error_bound}.  We begin with a lemma which expresses $L_p'(f,k/2)$ in terms of $\mu_f$.

\begin{lemma}
\label{lemma:Lderiv}
We have
\begin{equation*}
L_p'(f,k/2) = \int_{\Zpx} x^{\frac{k}{2}-1} \cdot \log_p \langx \dmuf.
\end{equation*}
\end{lemma}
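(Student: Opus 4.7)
The plan is to start from the definition of the derivative in equation \eqref{eqn:derivative-derivation}, differentiate under the integral sign, and then simplify using the decomposition $x = \omega(x) \langle x \rangle$ that holds for $x \in \Zpx$.

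More precisely, the paper sets
\begin{equation*}
L_p'(f,k/2) = \frac{d}{ds}\bigg\vert_{s=k/2} \left( \int_{\Zpx} \omega(x)^{\frac{k}{2}-1} \langx^{s-1} \d\mu_f(x) \right).
\end{equation*}
Since $k$ is even, $\frac{k}{2}-1$ is an integer, so $\omega(x)^{\frac{k}{2}-1}$ is a genuine finite-order character of $\Zpx$, independent of $s$. The first step is to justify pulling the derivative inside the integral. Because $\mu_f$ is a locally analytic distribution on $\Zpx$ and the function $s \mapsto \langx^{s-1}$ is locally analytic in $s$ with values in locally analytic functions of $x$, this interchange is standard (one may also verify it on Riemann sums via the defining moment relations \eqref{eqn:periods}). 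With that granted, we have $\frac{d}{ds}\langx^{s-1} = \langx^{s-1} \log_p \langx$.

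Second, evaluating at $s = k/2$ gives
\begin{equation*}
L_p'(f,k/2) = \int_{\Zpx} \omega(x)^{\frac{k}{2}-1} \langx^{\frac{k}{2}-1} \log_p\langx \d\mu_f(x).
\end{equation*}
Third, using the identity $x = \omega(x)\langx$ on $\Zpx$, the product $\omega(x)^{\frac{k}{2}-1} \langx^{\frac{k}{2}-1}$ collapses to $x^{\frac{k}{2}-1}$, yielding the claimed formula.

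The only nontrivial point is the justification of differentiation under the integral sign. I would handle it by noting that for any locally analytic distribution $\mu$ on a compact open $U \subseteq \Zp$ and any two-variable locally analytic function $F(s,x)$ on $D \times U$ (with $D$ an open disk of $s$-values), $s \mapsto \int_U F(s,x)\d\mu(x)$ is locally analytic and its derivative is computed by differentiating $F$ in $s$; this follows from expanding $F$ in a power series in $s$ and applying $\mu$ term-by-term. Alternatively, one can reduce to a statement about moments and invoke continuity of $\mu_f$ on bounded sets of locally analytic functions. Either route is routine, so I do not expect any real obstacle beyond writing it carefully.
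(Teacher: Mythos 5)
Your proof is correct and takes essentially the same approach as the paper's. The paper simply packages the computation slightly differently: rather than differentiating $\langle x\rangle^{s-1}$ under the integral sign and then collapsing $\omega(x)^{\frac{k}{2}-1}\langle x\rangle^{\frac{k}{2}-1}$ to $x^{\frac{k}{2}-1}$ at the end, it first rewrites the integrand as $x^{\frac{k}{2}-1}\langle x\rangle^{s-k/2}$, expands $\langle x\rangle^{s-k/2}=\exp\bigl((s-k/2)\log_p\langle x\rangle\bigr)$ as a power series in $(s-k/2)$, moves the sum outside the integral, and reads off the linear coefficient; this makes the interchange you flag as the "only nontrivial point" explicit at the level of a concrete power-series expansion rather than an appeal to a general differentiation-under-the-integral-sign principle. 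The two routes are computationally identical.
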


\begin{proof}
By \eqref{eqn:derivative-derivation} we need to calculate the derivative of $L_p(f,\omega^{\frac{k-2}{2}},s)$ at $s = k/2$. We compute
\begin{align*}
L_p(f,\omega^{\fk},s) 
&= \int_{\Zpx} \omega(x)^{\frac{k}{2}-1} \langx^{s-1} \dmuf\\
&= \int_{\Zpx} x^{\frac{k}{2}-1} \langx^{s-k/2} \dmuf\\
&= \int_{\Zpx} x^{\frac{k}{2}-1} \exp((s-k/2)\log_p \langx) \dmuf\\
&= \int_{\Zpx} x^{\frac{k}{2}-1} \sum_{n=0}^\infty \frac{1}{n!} (\log_p \langx)^n (s-k/2)^n \dmuf\\
&= \sum_{n=0}^\infty \frac{1}{n!} \left( \int_{\Zpx} x^{\frac{k}{2}-1} (\log_p \langx)^n \dmuf \right) (s-k/2)^n.
\end{align*}
Taking the $n = 1$ term completes the proof.
\end{proof}

By Lemma \ref{lemma:Lderiv}, 
$$
L_p'(f,k/2)=\ds \int_{\Zpx} g(x) \dmuf
$$ 
with $g(x)$ a locally analytic function on $\Zpx$.  To understand how to approximate $L_p'(f,k/2)$, let's recall how one extends $\mu_f$ from a functional on the space of locally polynomial functions of degree at most $k-2$ (as in (\ref{eqn:periods})) to a distribution on all locally analytic functions.

To this end, assume that $n$ is large enough so that $g(x)$ is analytic on $a + p^n\Zp$ for all $a$.  Then decompose $\Zpx$ as a disjoint union of balls of the form $a_i + p^n\Zp$ with each $a_i \in \Z$ (depending on $n$).  Write $\TS_{a_i}(g)$ for the Taylor series expansion of $g$ around $x=a_i$ {\em truncated} to terms of degree at most $k-2$.  One defines
\begin{align*}
\mu_f(g) 
&:= \lim_{n \to \infty} \sum_{i} \int_{a_i+p^n\Zp} \TS_{a_i}(g) \dmuf\\
&= \lim_{n \to \infty} \sum_{i} \mu_f(\TS_{a_i}(g),a_i,p^n)
\end{align*}
which indeed converges and the limit is independent of the choice of $a_i$.

Returning to the start of this section, we simply define
\begin{equation}\label{eqn:Ln-definition}
L_n = \sum_{\substack{a=1 \\ p \nmid a}}^{p^n-1} \mu_f(\TS_{a}(x^{\frac{k}{2}-1} \cdot \log_p \langx),a,p^n)
\end{equation}
which by Lemma \ref{lemma:Lderiv} and the above discussion converges to $L_p'(f,k/2)$ as $n \to \infty$. Since $L_n$ is defined by values of $\mu_f$ on polynomials of degree at most $k-2$, we see from Section \ref{subsec:modular-symbols} that $L_n$ is a linear combination of period of integrals of $f$, and thus each $L_n$ can be computed from the modular symbol attached to $f$.

Before giving a proof of Theorem \ref{thm:error_bound}, we need a technical lemma $p$-adically bounding the coefficients of the Taylor expansion of $x^{\frac{k}{2}-1} \cdot \log_p \langx$.

\begin{lemma}
\label{lemma:taylor_coefs}
If $a \in \Zp^\times$, then
$$
\ds x^{\frac{k}{2}-1} \cdot \log_p \langx = \sum_{m \geq 0} c_{m,a} (x-a)^m,
$$
where $c_{m,a} \in \Qp$ and $\ds v_p(c_{m,a}) \geq -\left\lfloor \frac{\log m}{\log p} \right\rfloor$.
\end{lemma}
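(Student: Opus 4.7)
My plan is to expand both factors of $x^{k/2-1}\log_p\langle x\rangle$ in powers of $(x-a)$ separately, multiply, and then track the $p$-adic valuations of the resulting coefficients. The key simplifying observation is that $\log_p\langle x\rangle = \log_p(x)$ for $x \in \Zp^\times$: writing $x = \langle x\rangle \omega(x)$, we have $\log_p(x) = \log_p\langle x\rangle + \log_p \omega(x)$, and the second term vanishes because $\omega(x)$ has finite order dividing $p-1$ (or $2$, if $p=2$). So we just need to Taylor-expand $x^{k/2-1}\log_p(x)$ around $a$.

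For the polynomial factor, the binomial theorem gives
\[
x^{k/2-1} = \sum_{j=0}^{k/2-1}\binom{k/2-1}{j}\,a^{k/2-1-j}(x-a)^j,
\]
whose coefficients lie in $\Zp$ since $a \in \Zp^\times$. For the logarithm factor, writing $x = a(1+(x-a)/a)$ yields
\[
\log_p(x) = \log_p(a) + \sum_{n\geq 1} \frac{(-1)^{n-1}}{n\,a^n}(x-a)^n,
\]
where $\log_p(a) \in \Zp$ (indeed $\log_p(a) = \log_p\langle a\rangle \in p\Zp$ for $p$ odd, and $\log_p(a) = \log_2\langle a\rangle$ for $p=2$).

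Multiplying the two expansions and collecting the coefficient of $(x-a)^m$ gives
\[
c_{m,a} = \log_p(a)\binom{k/2-1}{m}a^{k/2-1-m} + \sum_{\substack{n+j=m\\ n\geq 1,\,0\leq j\leq k/2-1}} \binom{k/2-1}{j}\frac{(-1)^{n-1}}{n}\,a^{k/2-1-j-n},
\]
with the convention that $\binom{k/2-1}{m} = 0$ for $m > k/2-1$. Every factor besides $1/n$ is a $p$-adic integer: the binomial coefficients are integers, the powers of $a$ are units, and $\log_p(a) \in \Zp$. The only source of a denominator is $1/n$ for $1 \leq n \leq m$, so each summand has $v_p \geq -v_p(n) \geq -\lfloor \log m /\log p\rfloor$.

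The ``main obstacle'' is really just a bookkeeping point: making sure no cancellation-of-denominators sleight-of-hand is needed, which is exactly why we separate the polynomial and logarithmic expansions rather than, say, try to use the product's derivatives at $a$ directly. Once the factorization is in hand, the valuation bound follows immediately from the ultrametric inequality applied to the explicit sum above.
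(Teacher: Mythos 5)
Your proposal is correct and follows essentially the same route as the paper: expand $\log_p\langle x\rangle$ around $a$ (using that $\log_p\langle x\rangle = \log_p(x)$ on $\Zp^\times$, which the paper uses implicitly via the formula for the $n$-th derivative), multiply by the binomial expansion of $x^{k/2-1}$, and observe that the only denominators come from the $1/n$ terms in the logarithm series. The explicit display of $c_{m,a}$ is a helpful bookkeeping touch, but the underlying argument matches the paper's.
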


\begin{proof}
The $n$-th derivative of $\log_p\langle x\rangle$ is $(-1)^{n-1} (n-1)! x^{-n}$. So, on $a+p^n\Zp$, we have the following Taylor expansion:
\begin{align*}
\log_p\langx 
&= \log_p\langle a \rangle + \sum_{j\geq1} \frac{(-1)^{j-1}}{j} a^{-j} (x-a)^j
\end{align*}
and thus
\begin{align*}
x^{\frac{k}{2}-1} \log_p\langx 
&= ((x-a)+a)^{\frac{k}{2}-1} \cdot \left( \log_p\langle a \rangle + \sum_{j\geq1} \frac{(-1)^j}{j} a^{-j} (x-a)^j \right).
\end{align*}
Since $a^{-j} \in \Zp^\times$ for any $j$, the only denominators in the above expression are the denominators $1/j$ appearing in the expansion of the logarithm.  In particular, expanding the above expression shows that its $m$-th coefficient is a sum of terms whose denominators are integers with magnitude at most $m$.  This observation implies the lemma.
\end{proof}

\begin{proof}[Proof of Theorem \ref{thm:error_bound}]
For $a \in \Zp^\times$, write $x^{\frac{k}{2}-1} \log_p \langx = \sum_{m \geq 0} c_{m,a} (x-a)^m$.  Then we have
\begin{align*}
L'_p(f,k/2)-L_n
&=
\int_{\Zpx} x^{\frac{k}{2}-1} \log_p \langx \dmuf - \sum_{\substack{a=1 \\ p \nmid a}}^{p^n-1} \mu_f(\TS_{a}(x^{\frac{k}{2}-1} \cdot \log_p \langx),a,p^n)\\
&=
\sum_{\substack{a=1 \\ p \nmid a}}^{p^n-1}
\int_{a+p^n\Zp} \sum_{m=k-1}^\infty c_{m,a} (x-a)^m \dmuf \\ 
&=
\sum_{\substack{a=1 \\ p \nmid a}}^{p^n-1}
 \sum_{m=k-1}^\infty c_{m,a} \int_{a+p^n\Zp} (x-a)^m \dmuf.
\end{align*}
Thus, using Lemma \ref{lemma:betterMTT} to estimate the integrals $\int (x-a)^m \dmuf$ and Lemma \ref{lemma:taylor_coefs} to estimate the $c_{m,a}$, we see
\begin{align*}
v_p(L'_p(f,k/2)-L_{n}) 
&\geq \min_{m \geq k-1} \left\{ n\left(m - \fk\right) - \left\lfloor \frac{\log m}{\log p} \right\rfloor \right\}\\
&=   \frac{nk}{2} - \left\lfloor \frac{\log(k-1)}{\log p} \right\rfloor 
\end{align*}
as desired.
\end{proof}

\section{Summary of algorithm to calculate $\L$-invariants}\label{sec:summary}

In this section, we summarize practical considerations for calculating valuations of $\L$-invariants. A practical version of the algorithm was implemented in Magma \cite{magma}. The code is posted to the github repository \cite{Linv-github}.

As explained in Section \ref{sec:background}, we use modular symbols rather than modular forms. Fix a sign $\varepsilon$ for complex conjugation. Let 
\begin{equation*}
H_k \subseteq H^1(Y_0(Np),\Sym^{k-2}(\Q^2))^{\varepsilon}
\end{equation*}
be the subspace that is both new of level $\Gamma_0$ and cuspidal. The outcome of the algorithm will be the $p$-adic valuations $v_p(\L)$ of the $\L$-invariants for the newforms of level $\Gamma_0$, detected through their realization in $H_k$.

\begin{enumerate}[label=(\roman*)]
\item Perform a decomposition 
$$
H_k = \bigoplus_{i=1}^d A_i
$$ 
into modules simple for the Hecke algebra. We then fix $A = A_i$, and from this point forward the calculation depends just on $A$. \label{alg:first-step}
\item The eigenvalues of $U_p$ acting on $A$ must all be of the form $\pm p^{\frac{k}{2}-1}$ with a fixed sign. We calculate the $p$-th Hecke polynomial and use it to determine the sign of $a_p = a_p(A)$. Since $A$ is new of level $\Gamma_0$, the sign $w = w(A)$ of the functional equation can extracted from the sign of the eigenvalue for the Atkin--Lehner operator $w_{Np}$ on $A$. Having determined these two signs, we find a quadratic discriminant $D$ as in Proposition \ref{prop:twist-our-worries-away}. Write $\chi=\chi_D$ for the quadratic character of conductor $D$.
\item Let $K$ be the number field generated by $A$. We construct a non-zero Hecke eigenvector $\lambda \in A \otimes_{\Q} K$. This is tantamount to choosing a modular form $f$ from the Galois orbit of eigenforms corresponding $A$. So, write $\lambda = \lambda_f$. We then pre-compute two sets of data.\label{alg:second-step}
\begin{enumerate}[label=(\alph*)]
\item We pre-compute the values\label{alg:second-step-S}
$$
\mathcal S = \{\lambda_f(z^i,x,y) \mid 0 \leq i \leq k-2\},
$$
for a collection of rational numbers $x/y$ such that $\{\infty\}-\{x/y\}$ are $\Z[\Gamma_0]$-generators of all unimodular paths.
\item We pre-compute\label{alg:second-step-est}
$$
\lambda_f(z^i, Da-pb,Dp) \in K
$$
where $1 \leq a \leq p-1$ and $1 \leq b \leq D$ is co-prime to $D$. 
\end{enumerate}\label{step:3rd-step}
\item In calculating the values of $\lambda_f$, a computer algebra system implicitly uses {\em some} choice of periods. However, the bound in Theorem \ref{thm:error_bound} depends on  fixing normalized periods as in Section \ref{sec:background}. We impose this normalization as follows. For each prime $\mathfrak p$ dividing $p$ in $K$, we consider the natural $\mathfrak p$-adic valuation $v_{\mathfrak p}$ on $K$, normalized so that $v_{\mathfrak p}(p) = 1$. Then, use \ref{alg:second-step}\ref{alg:second-step-S} to define
$$
\nu_{\mathfrak p} =  \min_{s \in \mathcal S} v_{\mathfrak p}(s).
$$
If normalized  periods were used, we would have $\nu_{\mathfrak p} = 0$. In general, $\nu_{\mathfrak p}$ represents the shift in valuation required to replace the (unknown) periods with the normalized ones.
\item \label{alg:final-step} Based on \ref{alg:second-step}\ref{alg:second-step-est}, we can calculate any summation $\lambda_{f_\chi}(-,-,-)$ as in \eqref{eqn:birch-stevens-twist}. In particular, we determine
\begin{equation*}
L_\infty (f_\chi,k/2)= \lambda_{f_{\chi}}(z^{\frac{k}{2}-1},0,1).
\end{equation*}
along with the $p$-adic estimate $L_1 \in K$ of $L_p'(f_\chi,k/2)$ in \eqref{eqn:Ln-definition}. Now fix a prime $\mathfrak p$ dividing $p$ in $K$. If
\begin{equation}\label{eqn:bound-needed}
v_{\mathfrak p}(L_1) - \nu_{\mathfrak p} > \frac{k}{2} - \left\lfloor \frac{\log (k-1)}{\log(p)} \right\rfloor,
\end{equation}
then, by Theorem \ref{thm:error_bound}, we see that
$$
v_{\mathfrak p}(L_1)-v_{\mathfrak p}(L_\infty(f_\chi,k/2))
$$
is equal to $v_{\mathfrak p}(\L)$ for $[K_{\mathfrak p}:\Q_p]$-many eigenforms occurring in $A$. So, we store this value, with multiplicity $[K_{\mathfrak p}:\Qp]$. If \eqref{eqn:bound-needed} fails for some $\mathfrak p$, then return to step \ref{alg:second-step-est} to compute at a deeper level, eventually replacing $L_1$ by $L_2$, and so on.
\end{enumerate}

\begin{remark}
At the start of this section, we said we made a {\em practical} implementation of the code. This means two things. First, the code assumes that $N$ is not a square. When $N$ is a square, we cannot control the value of $\chi(-1)$ for $\chi$ arising from Proposition \ref{prop:twist-our-worries-away}, and, in particular, we do not have {\it a priori} knowledge about the sign of the space of modular symbols we need to compute. We did make a custom computation to calculate in level $Np = 9\cdot 5 = 45$, where the data became useful to write Section \ref{subsec:galois}, by simply computing both the plus and minus spaces of modular symbols. Second, we did not implement the ``...and so on" step in \ref{alg:final-step}. Indeed, the initial data we gathered convinced us that 100\% of the data would pass step \ref{alg:final-step} from the start, so we simply throw away the thin set of data where \ref{alg:final-step} fails.
\end{remark}

\section{Raw data and observations}\label{sec:data}

In this section, we present and discuss data collected on $\L$-invariants. Section \ref{subsec:data-raw} contains the raw data plots in tame level one, for odd primes up to $p=11$. Their striking triangular nature is the topic of Section \ref{subsec:thresholds}. The remaining two sections discuss filtering the data, first by the sign of $a_p$ in Section \ref{subsec:signs} and then by associated Galois representations modulo $p$ in Section \ref{subsec:data-galois}. Phenomena related to the sign of $a_p$ has previously been noted in \cite{ABGT}, and our larger data collection efforts reinforce that article's observations. The discussion on filtering by Galois representations is novel to our approach.

We provide plots of only some of the data we collected. The complete set of raw data, and many other visualizations can be found in the github repository \cite{Linv-github}. The vast majority of the calculations were performed on a computing server at the Max Planck Institut f\"ur Mathematik. They have our gratitude once again for their hospitality and use of their machines.

\subsection{Initial plots in tame level one}\label{subsec:data-raw}

The next four figures present the data we gathered on valuations of $\L$-invariants in level $\Gamma_0(p)$ for primes $p=3,5,7,11$. (The prime $p=2$ is included as Figure \ref{fig:2_2_raw_huge} in the article's introduction.) The scatter plots below and similar plots throughout Section \ref{sec:data} are constructed with the following parameters.
\begin{itemize}
\item The horizontal axis represents {\em weights} of eigenforms. 
\item The vertical axis represents the {\em $p$-adic valuations} of $\L$-invariants, often called their {\em slopes}.
\item The size of a scatter point represents the datum's multiplicity. For instance if $v_p(\L) = -8$ occurs twice in weight $k$ and $v_p(\L) = -10$ occurs four times, then the scatter point at $(k,-10)$ is a magnitude larger than the scatter point at $(k,-8)$.
\end{itemize}
For each figure, the caption indicates the level at which the data was collected, along the total number of data represented ({\it i.e.}\ the total number of eigenforms).

\begin{figure}[htbp]
\centering
\includegraphics[scale=.75]{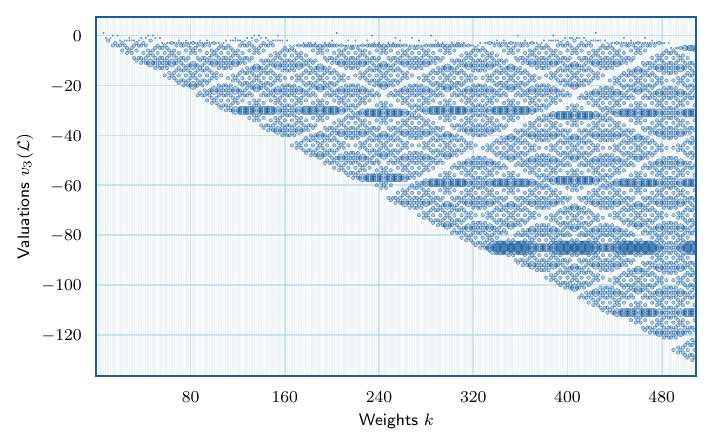}
\caption{$3$-adic slopes of $\L$-invariant slopes in level $\Gamma_0(3)$ with respect to weights $6 \leq k \leq 508$. (Total data:\ 10752 eigenforms.)}
\label{fig:3_3_raw}
\end{figure}

\begin{figure}[htbp]
\centering
\includegraphics[scale=.75]{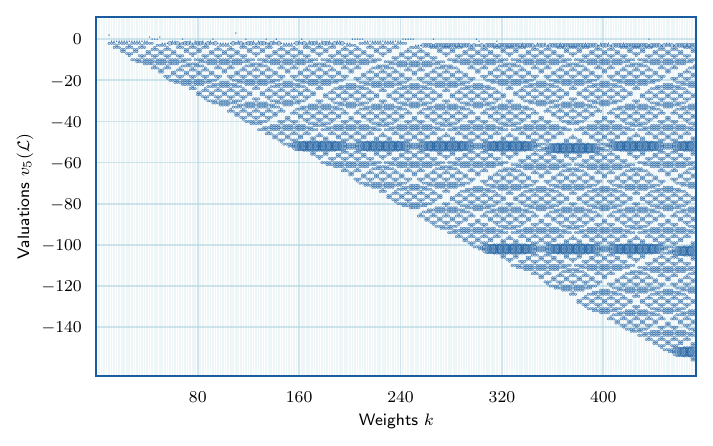}
\caption{$5$-adic slopes of $\L$-invariant slopes in level $\Gamma_0(5)$ with respect to weights $10 \leq k \leq 472$. (Total data:\ 18560 eigenforms.)}
\label{fig:5_5_raw}
\end{figure}

\begin{figure}[htbp]
\centering
\includegraphics[scale=.75]{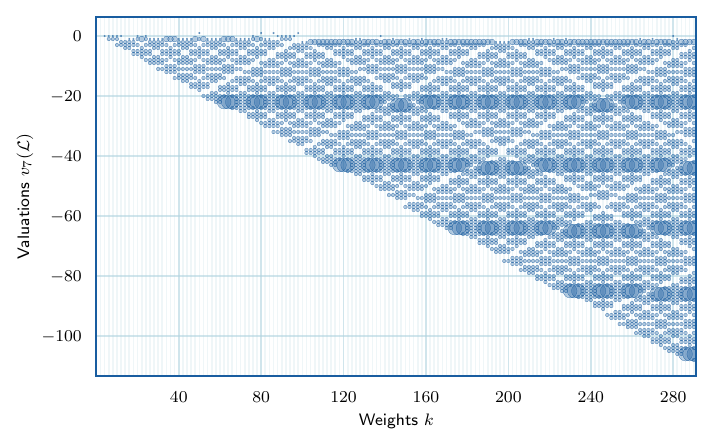}
\caption{$7$-adic slopes of $\L$-invariant slopes in level $\Gamma_0(7)$ with respect to weights $4 \leq k \leq 290$. (Total data:\ 10511 eigenforms.)}
\label{fig:7_7_raw}
\end{figure}

\begin{figure}[htbp]
\centering
\includegraphics[scale=.75]{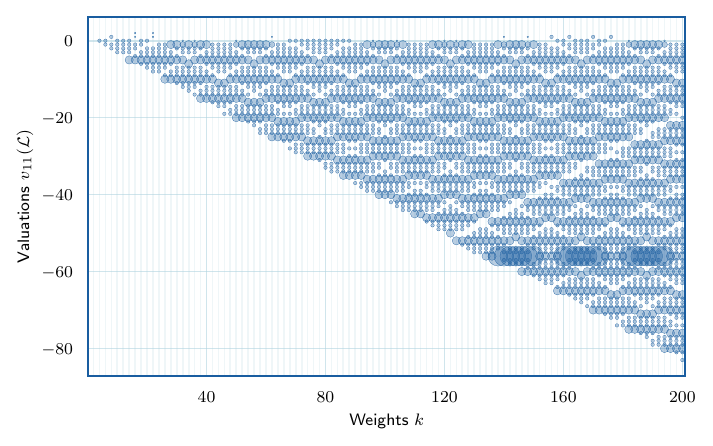}
\caption{$11$-adic slopes of $\L$-invariant slopes in level $\Gamma_0(11)$ with respect to weights $4 \leq k \leq 200$. (Total data:\ 8332 eigenforms.)}
\label{fig:11_11_raw}
\end{figure}

\subsection{Thresholds}\label{subsec:thresholds}

The triangular nature of the plots indicate two thresholds, the lower sloped side decreasing with respect to the weight and the upper horizontal threshold.

Let us first focus on the lower threshold. It seems there is a constant $C > 0$ such that the vast majority of $\L$-invariants satisfy $v_p(\L) \geq -Ck$. Using Figure \ref{fig:3_3_raw}, one can estimate that $C = \frac{1}{4}$ for level $\Gamma_0(3)$. Based on Figure \ref{fig:5_5_raw}, it seems that $C = \frac{1}{3}$ for level $\Gamma_0(5)$. Continuing the pattern, the data we have gathered supports, for all primes $p$ and all fixed levels $N$, the value
$$
C = \frac{p-1}{2(p+1)}
$$
as the relevant threshold.

Note that we are {\em not} proposing that  $v_p(\L) \geq -\frac{(p-1)k}{2(p+1)}$ for all eigenforms, as that is false. Rather, we are saying the number of times the bound is violated is statistically insignificant as $k\rightarrow +\infty$. A broader conjecture will be made later, in Conjecture \ref{conj}.

At the moment, there is no theoretical justification for the lower linear threshold. However, there are two kinds of evidence we can offer. First, there is {\em some} linear threshold for the data shown in these plots, though that threshold may not generalize to other primes or tame levels. See Theorem \ref{thm:BLL-intext} below, a theorem that will make more sense after discussing Galois representations in \ref{sec:theory}. The second evidence is that the precise form of $C$ predicted based on a heuristic (see Section \ref{subsec:heuristic}) and then verified from the scatter plots.

What about the upper threshold in our plots? Why should $v_p(\L)$ be negative almost all the time? Again, there is no theoretical evidence this happens, although it follows from Conjecture \ref{conj} once more. We can also say that this phenomenon (and likely the other threshold) is a by-product of studying $\L$-invariants for eigenforms in a fixed tame level with unbounded weights. For instance, among roughly 400 isogeny classes of rational elliptic curves that have split multiplicative reduction at $p=3$, and with conductor less than $10^3$, there are only 2 with $\L$-invariant having {\it negative} valuation.

\subsection{The sign of $a_p$}
\label{subsec:signs}

The $\L$-invariant data can be filtered into two parts based on the sign of $a_p = \pm p^{\frac{k}{2}-1}$. The overlap between the two resulting data sets is impossible not to notice. 

For instance, in level $\Gamma_0(3)$ and weight $k = 36$, the $3$-adic $\L$-invariants have valuations
$$
v_3(\L) = -9,-9,-4,-4,-2.
$$
These split as evenly as they could among the two signs:\ the $-9$ occurs once for each sign and the $-4$ occurs once for each sign. The $-2$ happens to occur for an eigenform with $a_3<0$. For a second example, in weight $k=28$ and level $\Gamma_0(3)$, there are an even number of eigenforms and their $\L$-invariants have slopes
$$
v_3(\L) = -6,-6,-2,-1.
$$
In this case, the $\{-6,-1\}$ appears with a positive $a_p$-sign and $\{-6,-2\}$ appears with a negative $a_p$-sign. Anni, B\"ockle, Gr\"af, and Troya also observed the high number of coincidences between slopes of $\L$-invariants appearing with $a_p = +p^{\frac{k}{2}-1}$ versus $a_p = -p^{\frac{k}{2}-1}$. See \cite[Section 6.3]{ABGT}, but note that the example of $k=28$ shows Conjecture 6.3(b) in {\em loc.\ cit.} is slightly misphrased. 

Synthesizing the observation, we propose two principles.
\begin{enumerate}[label=(\roman*)]
\item The slopes of $\L$-invariants, with multiplicity, occurring for eigenforms of each $a_p$-sign is more or less independent of the sign.\label{enum:sd-same-prin}
\item The slopes that are imbalanced between the two signs are those that are nearest to zero.\label{enum:sd-bias-prin}
\end{enumerate}
We systematically tested these principles  as follows. For each prime $p$ and each tame level $N$ we define
\begin{equation*}
\mathbf v_k^{\pm} = \{v_p(\L_f) \mid f \in S_k^{\pm}(\Gamma_0)\}
\end{equation*}
as a multi-set. For each $v$ equal to a slope of an $\L$-invariant we determined whether $v$ occurred with equal multiplicity in $\mathbf v_k^{+}$ versus $\mathbf v_k^{-}$. If so, we call $v$ sign independent in weight $k$, otherwise we call $v$ sign dependent.

To illustrate this, the sign dependent slopes in levels $\Gamma_0(2)$ and $\Gamma_0(7)$ are plotted in Figures \ref{fig:sign_bias_2_2_raw} and \ref{fig:sign_bias_7_7_raw}. In each figure, the upper plot is a scatter plot of {\em all} the sign {\em dependent} slopes. The bottom plot is a bar chart tabulating the total number of sign dependent slopes weight-by-weight. The (blue) triangles pointing up represent slopes with higher multiplicity in $\mathbf v_k^+$ than $\mathbf v_k^-$, while the downward pointing (red) triangles represent the opposite.

\begin{figure}[htbp]
\centering
\includegraphics[scale=.75]{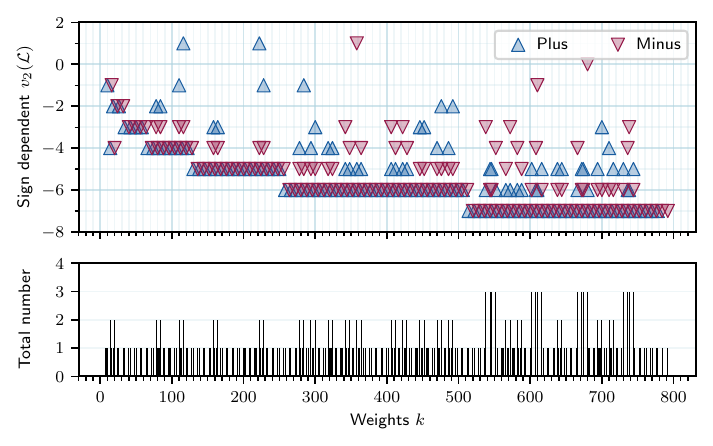}
\caption{Sign bias for $\L$-invariants in level $\Gamma_0(2)$ with respect to weights $10 \leq k \leq 730$. Upper: Scatter plot of sign dependent slopes.
Lower: Frequencies of sign dependent slopes.}
\label{fig:sign_bias_2_2_raw}
\end{figure}

\begin{figure}[htbp]
\centering
\includegraphics[scale=.75]{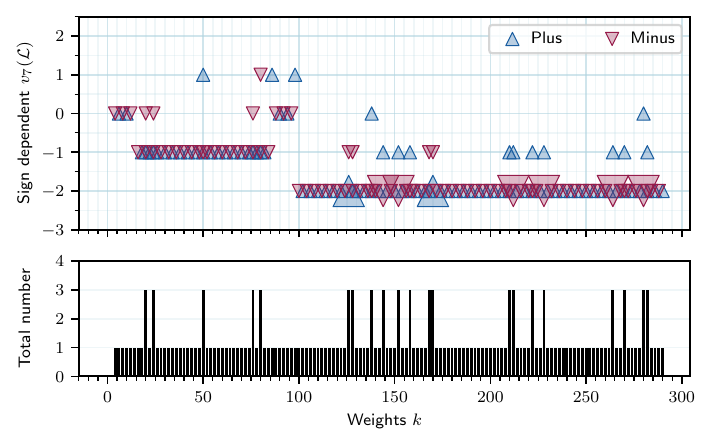}
\caption{Sign bias for $\L$-invariants in level $\Gamma_0(7)$ with respect to weights $4 \leq k \leq 290$. Upper: Scatter plot of sign dependent slopes.
Lower: Frequencies of sign dependent slopes.}
\label{fig:sign_bias_7_7_raw}
\end{figure}

Both plots support our two principles above. For instance, in level $\Gamma_0(2)$ the most negative sign dependent $\L$-invariant slope we found was $-7$. This is extremely close to zero, once we note the most negative $\L$-invariant slope over the same data range is less than $-120$ (see Figure \ref{fig:2_2_raw_huge}). It seems plausible to expect sign dependent slopes $v$ to satisfy $v \geq -c \log(k)$ where $c$ is a positive constant. (Depending on what? Perhaps just $p$.) It also seems reasonable that the {\em total number} of sign dependent slopes in weights $k \leq T$ is $O(T \log(T))$, which is a magnitude smaller than roughly $T^2$-many eigenforms of weight $k \leq T$.

As further evidence that sign dependent slopes are scarce, we provide Table \ref{table:sign-dependent-slopes}, which we explain now. In it, we  gather the total amount of sign dependent slopes in two stages. For each fixed $p$ and $N$, we determined the midpoint $k_{\mathrm{mid}}$ of the weights for which data was gathered. (For instance, in level $\Gamma_0(2)$ we gathered data for weights $10 \leq k \leq 730$ and so the midpoint is roughly $370$.) Then, we determined the percentage of sign dependent slopes occurring in weights $k\leq k_{\mathrm{mid}}$ versus all $k$. The first percentage occurs in the fifth column of Table \ref{table:sign-dependent-slopes} while the second is reported in the final column. The ratio of the percentages is roughly $2:1$, which is consistent with the number of sign dependent slopes in weights $k \leq T$ being a magnitude less than $T^2$.

\begin{table}[htp]
\renewcommand{\arraystretch}{1.1}
\begin{tabularx}{\textwidth}{|C|C|C|C|C|C|C|}
\hline $p$ & $Np$ & \# weights & \# data up $k_{\mathrm{mid}}$ & \% sign dependent up to $k_{\mathrm{mid}}$ & \# total data & \% sign dependent \\ \hline\hline
$2$ & $2$ & $391$ & $3434$ & $4.14$ & $13002$ & $2.37$ \\
 $2$ & $6$ & $165$ & $1291$ & $9.37$ & $4603$ & $5.02$ \\
 $2$ & $10$ & $136$ & $1680$ & $7.5$ & $6264$ & $3.64$ \\
 $2$ & $14$ & $126$ & $2245$ & $8.06$ & $7947$ & $3.49$ \\
 $2$ & $22$ & $103$ & $2340$ & $4.79$ & $9013$ & $2.34$ \\
 $3$ & $3$ & $252$ & $2730$ & $4.25$ & $10752$ & $2.18$ \\
 $3$ & $6$ & $164$ & $1291$ & $6.89$ & $4546$ & $4.4$ \\
 $3$ & $15$ & $72$ & $962$ & $12.47$ & $3594$ & $6.12$ \\
 $3$ & $21$ & $70$ & $1296$ & $9.57$ & $5040$ & $4.88$ \\
 $3$ & $33$ & $42$ & $735$ & $14.83$ & $2941$ & $7.17$ \\
 $5$ & $5$ & $243$ & $5371$ & $3.18$ & $20341$ & $1.61$ \\
 $5$ & $10$ & $135$ & $1680$ & $5.24$ & $6173$ & $2.64$ \\
 $5$ & $15$ & $72$ & $962$ & $8.94$ & $3594$ & $5.29$ \\
 $5$ & $35$ & $41$ & $883$ & $10.31$ & $3363$ & $6.87$ \\
 $5$ & $70$ & $20$ & $280$ & $9.29$ & $960$ & $5.62$ \\
 $7$ & $7$ & $144$ & $2664$ & $3.38$ & $10512$ & $1.77$ \\
 $7$ & $14$ & $90$ & $1301$ & $7.76$ & $4085$ & $4.38$ \\
 $7$ & $21$ & $70$ & $1296$ & $6.64$ & $5040$ & $3.37$ \\
 $7$ & $35$ & $41$ & $883$ & $10.76$ & $3363$ & $5.68$ \\
 $7$ & $70$ & $19$ & $280$ & $10.0$ & $874$ & $6.41$ \\
 $11$ & $11$ & $99$ & $2166$ & $5.08$ & $8332$ & $2.66$ \\
 $13$ & $13$ & $85$ & $1935$ & $3.05$ & $7433$ & $1.57$ \\
\hline
\end{tabularx}
\caption{Summary statistics for sign dependent slopes of {\em newforms} in level $\Gamma_0(Np)$ for various tame levels $N$ and primes $p$.}
\label{table:sign-dependent-slopes}
\end{table}

\subsection{Galois representations}\label{subsec:data-galois}
We use the following notations for Galois representations associated to an eigenform $f$. Its global $p$-adic representation is denoted by
$$
\rho_f : \Gal(\Qbar/\Q) \rightarrow \GL_2(\Qpbar).
$$
The semi-simplification modulo $p$ is denoted by $\rhobar_f$. 

Our method to extract $\L$-invariant data diagonalizes spaces of modular symbols as one of its steps. So, alongside $\L$-invariant data, we are able to tabulate eigensystems. In particular, for each eigenform $f$ we simultaneously determined both $v_p(\L_f)$ and the global mod $p$ Galois representation $\rhobar_f$. The main benefit is that we can filter the previous data according to one of the finitely many $\rhobar$ at level $\Gamma_0$.

We illustrate this first in a small example. Write $\omega$ for the cyclotomic character modulo $p$. In levels $\Gamma_0(2)$ and $\Gamma_0(3)$ the only $\rhobar$ is $1 \oplus 1$ and $1\oplus \omega$, respectively. Therefore, the data in Figures \ref{fig:2_2_raw_huge} and \ref{fig:3_3_raw} represent data for eigenforms with fixed mod $p$ Galois representation, already. In level $\Gamma_0(5)$, there are four distinct global representations:\ $1 \oplus \omega$ and its three non-trivial cyclotomic twists $\omega \oplus \omega^2$, $\omega^2 \oplus \omega^3$, and $1 \oplus \omega^3$. Thus in Figure \ref{fig:5_5_Galois_representations_apart} we re-plot the $\Gamma_0(5)$-data in four separate plots each corresponding to a fixed $\rhobar$, and in Figure \ref{fig:5_5_Galois_representations} we re-overlay the data but keep the color and shape codings intact.  (We also took the liberty to reduce the maximum weight in Figure \ref{fig:5_5_Galois_representations}, in order to make the plot easier to study ``by-eye''.) 

\begin{figure}[htbp]
\centering
\includegraphics[scale=.75]{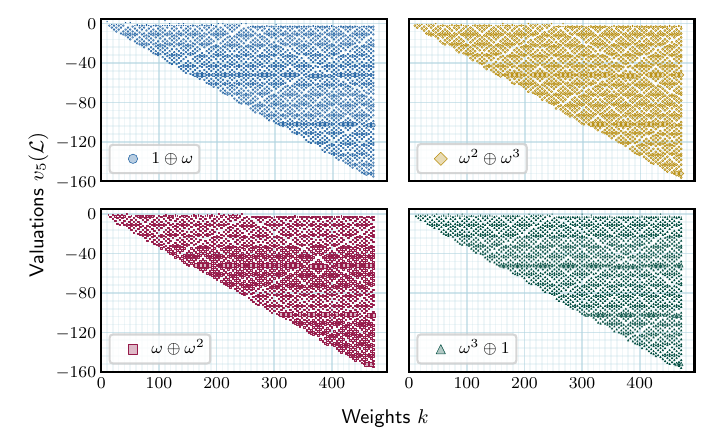}
\caption{Slopes of $\L$-invariants in level $\Gamma_0(5)$ with respect to weights $10 \leq k \leq 472$, filtered according to their Galois representations. (Total data:\ 18560 eigenforms.)}
\label{fig:5_5_Galois_representations_apart}
\end{figure}

\begin{figure}[htbp]
\centering
\includegraphics[scale=.75]{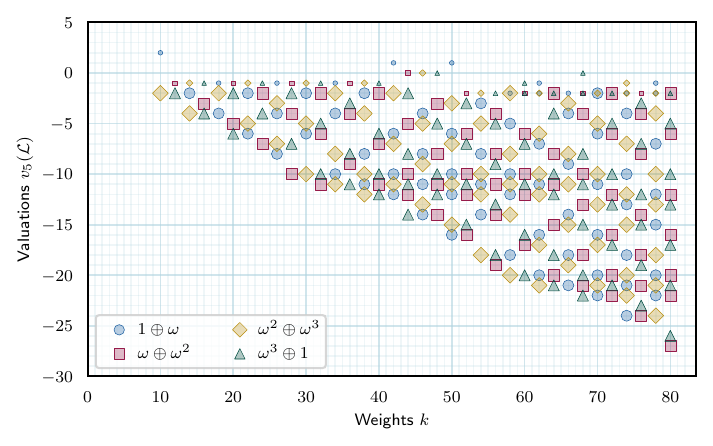}
\caption{Slopes of $\L$-invariants in level $\Gamma_0(5)$ with respect to weights $10 \leq k \leq 80$, filtered according to their Galois representations. (Total data:\ 287 eigenforms.)}
\label{fig:5_5_Galois_representations}
\end{figure}

The plots are rather astounding. To be clear, the literal data in the four plots of Figure \ref{fig:5_5_Galois_representations_apart} are not the same (as illustrated in Figure \ref{fig:5_5_Galois_representations}). After all, the data is not even  supported on the same weights because each $\rhobar$ lifts to only half even weights, depending on whether $k \bmod 4$. Even if you fix the congruence class $k\bmod 4$, the data are not the same. Rather, they interweave with one another while the plots for a fixed $\rhobar$ maintain the basic triangular shape and cellular structures we have seen in all our data thus far.

Figure \ref{fig:11_11_raw_highlight} offers a second plot to visualize how $\rhobar$-filtered data sits within an unfiltered dataset. Namely, let $\rhobar$ be the mod $11$ representation associated with the elliptic curve $X_0(11)$. Then we consider all eigenforms $f$ for which $\rhobar_f \simeq \rhobar \otimes \omega^j$ for some $j=0,1,\dotsc,10$. We call these eigenforms ``twists of $X_0(11)$ modulo $11$''. Each such $\rhobar_f$ is surjective, which makes the context slightly different than the Eisenstein examples above. 

Now, here is what we are showing in Figure \ref{fig:11_11_raw_highlight}.
\begin{enumerate}
\item The black dots represent data gathered from eigenforms that are twists of $X_0(11)$ modulo $11$, including with multiplicity.
\item The larger blue dots represent {\em all} the data gathered in level $\Gamma_0(11)$.
\end{enumerate}

\begin{figure}[htbp]
\centering
\includegraphics[scale=.75]{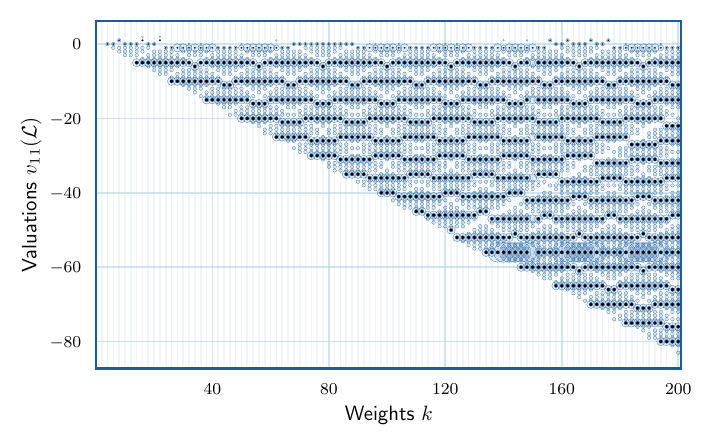}
\caption{$11$-adic slopes of $\L$-invariants in level $\Gamma_0(11)$ for eigenforms of weight $4 \leq k \leq 200$ underneath those eigenforms that lift a twist of $X_0(11)$. (Total data:\ 8332 eigenforms. $X_0(11)$ data:\ 1655 eigenforms.)}
\label{fig:11_11_raw_highlight}
\end{figure}

The plot is consistent with what we have already seen. First, there is no major qualitative difference between the total data at level $\Gamma_0(11)$ and the data for twists of $X_0(11)$ modulo $11$. The basic triangular shape and its somewhat cellular structure remains for the filtered data. If you look vertically above a weight $k$, you will, however, notice space where the full $\Gamma_0(11)$-data is supported, but not data for twists of $X_0(11)$ modulo $11$. We will make more sense of that in Section \ref{subsec:galois} below.

\section{Galois-theoretic perspectives}
\label{sec:theory}

This section is an interlude describing some underlying theories related to Galois representations. In the next section, we re-analyze the raw data in light of the theory described here.

We begin by recalling the Fontaine--Mazur definition of $\L$-invariants. Then, we discuss perspectives on local and global deformations of Galois representations.

\subsection{$\L$-invariants and local Galois representations}\label{subsec:Linvs-local-Galois}

Fix $k\geq 2$, a sign $\pm$, and $\L \in \Qpbar$. We write  $D = \Qpbar e_1 \oplus \Qpbar e_2$ and define $\Qpbar$-linear operators $\varphi$ and $N$ on $D$ by
\begin{align*}
\varphi &= \begin{pmatrix} \pm p^{\frac{k}{2}} & 0 \\ 0 & \pm p^{\frac{k}{2}-1} \end{pmatrix}\\
N &= \begin{pmatrix} 0 & 0 \\ 1 & 0 \end{pmatrix},
\end{align*}
in the ordered basis $(e_1,e_2)$.  We endow $D$ with a filtration $\Fil^{\bullet}D$ by
\begin{equation*}
\mathrm{Fil}^{j}D = \begin{cases}
D & \text{if $j\leq 0$;}\\
\Qpbar\langle e_1 + \L e_2 \rangle & \text{if $1 \leq j\leq k-1$;}\\
(0) & \text{otherwise.}
\end{cases}
\end{equation*}
In this way, $D$ is a two-dimensional $\Qpbar$-linear weakly-admissible filtered $(\varphi,N)$-module in the sense of Fontaine's $p$-adic Hodge theory of local Galois representations \cite{Fontaine-RepresentationSemiStable}. A theorem of Colmez and Fontaine (\cite[Th\'eorm\`eme A]{CF}) implies there exists a two-dimensional local representation
\begin{equation*}
r : \Gal(\Qpbar/\Qp) \rightarrow \GL_{\Qpbar}(V) \simeq \GL_2(\Qpbar)
\end{equation*}
that is semi-stable and non-crystalline, since $N\neq 0$ on $D$, for which $D_{\mathrm{st}}(r) = D$. The Hodge--Tate weights of $r$ equal $\{0,k-1\}$ and $\det(r) = \omega^{k-1}$.  We write $r_{k,\L}^{\pm}$ for this representation, or we write $V_{k,\L}^{\pm}$ when it is more convenient to reference the vector space upon which the Galois groups acts. Any two-dimensional, semi-stable, and non-crystalline, $\Qpbar$-linear  $\Gal(\Qpbar/\Qp)$-representation with distinct Hodge--Tate weights is isomorphic to a twist of some such $r_{k,\L}^{\pm}$. The representations $r_{k,\L}^{\pm}$ are irreducible if and only if $k > 2$. See \cite[Exemple 3.1.2.2]{BM} for a similar summary and references.

The connection with eigenforms is as follows. Let $f$ be a $\Gamma_0$-eigenform. Write 
\begin{equation*}
\rho_f : \Gal(\Qbar/\Q) \rightarrow \GL_2(\Qpbar)
\end{equation*}
for its global $p$-adic Galois representation, as before. Now, set
\begin{equation*}
r_f : \Gal(\Qpbar/\Qp) \rightarrow \GL_2(\Qpbar)
\end{equation*}
for the corresponding local restriction at $p$. A foundational theorem of Saito \cite{Saito} implies that
\begin{equation*}
r_f \simeq r_{k,\L_f}^{\pm},
\end{equation*}
where the sign $\pm$ is given by the sign of $a_p(f) = \pm p^{\frac{k}{2}-1}$. (This is the {\em definition} of the Fontaine--Mazur $\L$-invariant for the eigenform $f$, except Mazur originally chose the negative of our convention.)

In particular, from this perspective, the $\L$-invariant can be conceptualized as follows. When $f$ is a $\Gamma_0$-eigenform, its local Galois representation $r_f$ is completely determined by discrete data $(k,\pm)$ and a {\em continuous} parameter $\L$. 

To avoid complications below, it is convenient here to include a definition of $r_{k,\L}^{\pm}$ for $\L = \infty$. Namely, $r = r_{k,\infty}^{\pm}$ is the unique two-dimensional, irreducible, {\em crystalline} representation of $\Gal(\Qpbar/\Qp)$ such that $D_{\mathrm{crys}}(r) = \Qpbar e_1 \oplus \Qpbar e_2$ is the weakly-admissible filtered $(\varphi,N)$-module, with $N = 0$, and for which
\begin{equation*}
\varphi = \begin{pmatrix} \pm p^{\frac{k}{2}} & 0 \\ 0 & \pm p^{\frac{k}{2}-1}\end{pmatrix} \;\;\;\;\;\; \Fil^j D_{\mathrm{crys}}(r) = \begin{cases}
D_{\mathrm{crys}}(r) & \text{if $j\leq 0$;}\\
\Qpbar\langle e_1 + e_2 \rangle & \text{if $1 \leq j \leq k-1$;}\\
(0) & \text{otherwise}.
\end{cases}
\end{equation*}
This representation is irreducible if and only if $k > 2$, just like the $r_{k,\L}^{\pm}$ with $\L \neq \infty$ above.

Note that it is impossible that there exists an eigenform $f$ (of any level) such that $r_f \simeq r_{k,\infty}^{\pm}$. Indeed, Saito's theorem would then imply $f$ has  level prime-to-$p$ and
\begin{equation*}
a_p(f) = \tr(\varphi|_{D_{\mathrm{st}}(r)}) = \pm(p^{\frac{k}{2}} + p^{\frac{k}{2}-1}),
\end{equation*}
which violates the Ramanujan--Petersson bound.

\subsection{Loci with fixed reductions}\label{subsec:rbar-loci}
Here we push the idea of $\L$ as a continuous parameter further. Now consider a local mod $p$ representation
\begin{equation*}
\rbar : \Gal(\Qpbar/\Qp) \rightarrow \GL_{\Fpbar}(\overline V) \simeq \GL_2(\Fpbar).
\end{equation*}
Given $k$ and $\pm$, we write
\begin{equation*}
X_{k}^{\pm}(\rbar) =  \left\{ \L \in \P^1(\Qpbar) ~\Big|~ \substack{\ds   \text{~there exists a $\Zpbar$-linear $\Gal(\Qpbar/\Qp)$-stable~}  \\ \ds \text{~lattice $T \subseteq V_{k,\L}^{\pm}$ such that $T/\mathfrak m_{\Zpbar}T \simeq \overline V$} }\right\}.
\end{equation*}
The sets $X_k^{\pm}(\rbar)$ are closely related to deformation spaces of Galois representations, and they can be described quite concretely. 

To start, for any representation $V$ of a group, let $V^{\mathrm{ss}}$ denotes its semi-simplification. If $\rbar$ has scalar endomorphisms, we  define $\rbar_0 := \rbar$. Otherwise, assume $\rbar = \chi_1 \oplus \chi_2$ with $\chi_1 \neq \chi_2$. Without loss of generality, we insist that
\begin{equation*}
\dim_{\Fpbar} H^1(\Gal(\Qpbar/\Qp),\Fpbar(\chi_1\chi_2^{-1})) = 1.
\end{equation*}
Given $\chi_1\neq \chi_2$, the order of $\chi_1$ and $\chi_2$ matters only if $\chi_1\chi_2^{-1} = \omega^{\pm 1}$. Then, we define $\rbar_0$ as the reducible, but non-split, representation
\begin{equation*}
\rbar_0 \simeq \begin{pmatrix} \chi_1 & \ast \neq 0 \\ 0 & \chi_2 \end{pmatrix}
\end{equation*}
representing the unique-up-to-scalar class in $H^1$. Therefore, in all cases, $\rbar_0$ has scalar endomorphisms and $\rbar_0^{\mathrm{ss}} = \rbar^{\mathrm{ss}}$.

\begin{prop}\label{prop:ribets-lemma}
Assume $k > 2$ and $\rbar$ is not scalar, so that $\rbar_0$ is well-defined. Then, $X_k^{\pm}(\rbar) = X_k^{\pm}(\rbar_0)$.
\end{prop}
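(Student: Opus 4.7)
The plan is to translate the identity of two subsets of $\P^1(\Qpbar)$ into a question about which mod-$p$ reductions are realized by $\Gal(\Qpbar/\Qp)$-stable lattices in the irreducible representation $V := V_{k,\L}^{\pm}$, and then resolve that question with Ribet's lemma plus the Bruhat--Tits tree structure of $G$-stable lattices in an absolutely irreducible two-dimensional representation.

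First I would unpack the definitions. By construction, $\L \in X_k^{\pm}(\rbar)$ (respectively $\L \in X_k^{\pm}(\rbar_0)$) means precisely that $V$ admits a $\Gal(\Qpbar/\Qp)$-stable $\Zpbar$-lattice $T$ with $T/\m_{\Zpbar} T \simeq \rbar$ (respectively $\simeq \rbar_0$). Since $k > 2$, the representation $V$ is absolutely irreducible, and Brauer--Nesbitt together with the integrality of trace and determinant implies that the semi-simplification of $T/\m_{\Zpbar} T$ is an invariant of $V$ independent of the lattice $T$. Because $\rbar$ and $\rbar_0$ share the semi-simplification $\chi_1 \oplus \chi_2$, both $X_k^{\pm}(\rbar)$ and $X_k^{\pm}(\rbar_0)$ are contained in the set $\Sigma \subseteq \P^1(\Qpbar)$ of $\L$ for which the lattice-reduction semi-simplification of $V_{k,\L}^{\pm}$ equals $\chi_1 \oplus \chi_2$. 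It then suffices to show the reverse containments $\Sigma \subseteq X_k^{\pm}(\rbar)$ and $\Sigma \subseteq X_k^{\pm}(\rbar_0)$.

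For $\Sigma \subseteq X_k^{\pm}(\rbar_0)$, I would invoke Ribet's lemma:\ an absolutely irreducible two-dimensional $V$ with reducible semi-simplified lattice reduction $\chi_1 \oplus \chi_2$ and $\chi_1 \neq \chi_2$ admits a $G$-stable lattice $T'$ whose reduction fits into a short exact sequence $0 \to \chi_1 \to T'/\m T' \to \chi_2 \to 0$, necessarily non-split because $V$ is irreducible. The prior normalization $\dim H^1(\Gal(\Qpbar/\Qp),\Fpbar(\chi_1\chi_2^{-1})) = 1$ forces this extension to be isomorphic to $\rbar_0$, witnessing $\L \in X_k^{\pm}(\rbar_0)$.

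For the reverse containment $\Sigma \subseteq X_k^{\pm}(\rbar)$, I would start from a stable lattice $T$ with non-split reduction $\rbar_0$ and construct a second stable lattice $T' \subsetneq T$ whose reduction splits. Concretely, choose a basis $e_1, e_2$ of $T$ (over the ring of integers $\O$ of a sufficiently large finite extension, with uniformizer $\pi$) so that $\bar e_1$ generates the $\chi_1$-subspace of $T/\pi T$, and set $T' := \O e_1 + \pi T$. A short check shows $T'$ is $G$-stable and that $T'/\pi T'$ sits in an exact sequence $0 \to \chi_2 \to T'/\pi T' \to \chi_1 \to 0$ whose class lies in $H^1(\chi_2 \chi_1^{-1})$; that class is split exactly when the off-diagonal cocycle of the $G$-action on $T$ has image in $\pi \O$. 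More conceptually, the set of $G$-stable lattices in $V$ up to homothety forms a non-empty connected segment in the Bruhat--Tits tree of $\GL_2$ over $\O$, endpoints of the segment correspond to non-split reductions and interior vertices correspond to split reductions; whenever the segment has length at least two, a split reduction $\simeq\rbar$ is realized at an interior vertex.

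The main obstacle is therefore verifying that the stable-lattice segment in $V$ has length at least two. I would carry this out by the standard counting argument --- the $G$-stable neighbors of a vertex $T$ in the Bruhat--Tits tree correspond bijectively to the $G$-stable lines in $T/\pi T$, so non-split reductions contribute exactly one and split reductions exactly two --- combined with one more application of Ribet's lemma (or a dual argument to the contragredient $V^{\vee}$) showing that the segment cannot consist of a single vertex. With the segment length $\geq 2$ established, an interior vertex exists, providing a lattice with split reduction $\rbar$ and completing the proof.
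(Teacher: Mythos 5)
Your plan decomposes the statement in essentially the same way the paper does --- define the intermediate set $\Sigma$ of $\L$ for which the semi-simplified lattice reduction is $\chi_1 \oplus \chi_2$, show $\Sigma \subseteq X_k^{\pm}(\rbar_0)$ via Ribet's lemma, and then show $\Sigma \subseteq X_k^{\pm}(\rbar)$ by producing a lattice with \emph{split} reduction. The first half is fine and matches the paper's argument. The gap is in the second half, and it is a real one.

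You reduce $\Sigma \subseteq X_k^{\pm}(\rbar)$ to showing that the segment of homothety classes of $G$-stable $\O$-lattices in $V$ has length at least two (so that there is an interior vertex, which has split reduction). But your justification only rules out the segment being a single vertex. That is automatic once the residual representation is reducible, and it does \emph{not} rule out a segment of length exactly one: two endpoints, no interior. Your own neighbor-count shows this is a consistent configuration --- a length-one segment has two vertices each with exactly one $G$-stable neighbor (namely, the other), so both reductions are non-split, and no stable lattice over $\O$ has split reduction. Such length-one segments genuinely occur for a fixed coefficient ring $\O_E$. Ribet's lemma applied to $V$ and to $V^\vee$ gives lattices with the two ``opposite'' non-split reductions, but these can simply be the two endpoints of a length-one segment, so the extra application of Ribet does not force length $\geq 2$.

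The missing idea, which the paper supplies, is to enlarge the coefficient field: after passing from $E$ to a ramified (e.g.\ quadratic, as in $E(\sqrt p)$) extension, each edge of the tree is subdivided and the old midpoint of a length-one segment becomes an honest vertex --- and it carries a split reduction. This is legitimate because $X_k^{\pm}(\rbar)$ is defined using $\Zpbar$-lattices, so you are free to shrink to a large enough finite sub-extension. Adding this scalar-extension step to your argument closes the gap and makes your Bruhat--Tits reformulation equivalent to the paper's proof.
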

\begin{proof}
If $\rbar = \rbar_0$ there is nothing to prove. Otherwise, we assume $\rbar = \chi_1 \oplus \chi_2$ for distinct characters $\chi_j$, ordered as above. The rest of the proof follows from a famous observation of Ribet \cite[Section 2]{Ribet-Invent76}.

Suppose that $\L \in X_k^{\pm}(\rbar)$. Since $k > 2$, the representation $V_{k,\L}^{\pm}$ is irreducible and defined over some finite extension $E \supseteq \Qp$. By Ribet's \cite[Proposition 2.1]{Ribet-Invent76} there exists $\mathcal O_E$-linear and $\Gal(\Qpbar/\Qp)$-stable lattice $\Lambda \subseteq V_{k,\L}^{\pm}$ for which 
\begin{equation}\label{eqn:lattice-Lambda}
\Lambda/\mathfrak m_{\mathcal O_E}\Lambda \simeq \begin{pmatrix} \chi_1 & \ast \neq 0 \\ 0 & \chi_2 \end{pmatrix}.
\end{equation}
We have chosen the $\chi_j$ so that there is only one such representation, and thus $\Lambda/\mathfrak m_{\mathcal O_E}\Lambda \simeq \rbar_0$. Therefore, we have $X_{k}^{\pm}(\rbar) \subseteq X_{k}^{\pm}(\rbar_0)$.

Now suppose $\L \in X_k^{\pm}(\rbar_0)$. Then, there is a lattice $\Lambda \subseteq V_{k,\L}^{\pm}$ as in \eqref{eqn:lattice-Lambda}. After replacing $E$ by $E(\sqrt{p})$, if necessary, we can find a new $\mathcal O_E$-linear and Galois-stable lattice $T$ that contains $\Lambda$ and for which $T/\mathfrak m_{\mathcal O_E}T$ is semi-simple as a $\Gal(\Qpbar/\Qp)$-representation. In particular, $T/\mathfrak m_{\mathcal O_E}T \simeq \rbar$ and thus we have $X_{k}^{\pm}(\rbar_0) \subseteq X_{k}^{\pm}(\rbar)$.
\end{proof}

Now suppose that $\rbar$ has only scalar endomorphisms. In this case, there is a universal deformation ring $R_{\rbar}$, going back to Mazur \cite{Mazur-Deformations}. We will replace this ring with a certain quotient we call $R_{\rbar}^{k,\pm}$. 

To define $R_{\rbar}^{k,\pm}$, Kisin first constructed (see \cite{Kisin-PST}), for each $k$, a natural quotient $R_{\rbar} \twoheadrightarrow R_{\rbar}^{k,\mathrm{st}}$ that parametrizes semi-stable deformations of $\rbar$, with Hodge--Tate weights $0 < k-1$ and determinant $\omega^{k-1}$. The semi-stable condition is partially encoded in Kisin's ring by means of the inertial type. For comparison, we are taking the trivial inertial type. Let $\mathrm X_{\rbar}^{k,\mathrm{st}} = \Spf(R_{\rbar}^{k,\mathrm{st}})^{\mathrm{rig}}$ be the rigid generic fiber. The rigid space $\mathrm X_{\rbar}^{k,\mathrm{st}}$ is always reduced and equidimensional. Conceptually, $\mathrm X_{\rbar}^{k,\mathrm{st}}$ breaks up into a union of the crystalline locus and two loci that contain semi-stable and non-crystalline representations with one of the two signs $\pm$. For most $\rbar$, these three loci do not intersect. For $k > 2$, the only case where intersection is possible is when $\rbar \simeq \mathrm{ind}(\omega_2^{k-1})$. In that case, the issue is that, for either $\pm$, we have
\begin{equation*}
\rbar_{k,\infty}^{\pm} \simeq \mathrm{ind}(\omega_2^{k-1}),
\end{equation*}
and there are three smooth irreducible components meeting at $r_{k,\infty}^{\pm} \in \mathrm{X}_{\rbar}^{k,\mathrm{st}}(\Qpbar)$.

The ring $R_{\rbar}^{k,\pm}$ picks out the semi-stable and non-crystalline deformations with a fixed sign $\pm$. To define it, one relies on extending the trivial inertial type to a Galois type, in the terminology of Rozensztajn \cite[Definition 5.1.1(3)]{Roz}. Specifically, let $W_{\Qp}$ be the Weil group and denote $\tau_{\pm} = \unr(\pm 1) \oplus \unr(\pm p)$ (a smooth $W_{\Qp}$-representation). Then, $R_{\rbar}^{k,\pm}$ is defined as the maximal reduced quotient of $R_{\rbar}^{k,\mathrm{st}}$ supported on the union of components in $\Spec(R_{\rbar}^{k,\mathrm{st}})$ that contain at least one semi-stable lift $r$ of $\rbar$ such that the associated Weil--Deligne representation $\mathrm{WD}(r)$ has non-trivial monodromy and Frobenius acting by $\tau_{\pm}$.  See \cite[Section 2.3.3]{Rozensztajn-Pst}. 

We then define $\mathrm X_{\rbar}^{k,\pm} = \Spf(R_{\rbar}^{k,\pm})^{\mathrm{rig}}$ to be the rigid space associated with $R_{\rbar}^{k,\pm}$. Each $\mathrm X_{\rbar}^{k,\pm}$ is a union of components in $X_{\rbar}^{k,\mathrm{st}}$. The rigid space $\mathrm X_{\rbar}^{k,\pm}$ is smooth. The function $\L \mapsto r_{k,\L}^{\pm}$ defines a bijection 
\begin{equation*}
\L : X_{k}^{\pm}(\rbar) \rightarrow \mathrm{X}_{\rbar}^{k,\pm}(\Qpbar)
\end{equation*}
by \cite[Theorem 5.3.1]{Roz} and Section 7.6 of {\em op.\ cit.}.  Rozensztajn also proves in \cite[Corollary 5.3.2]{Roz} that this makes $X_{k}^{\pm}(\rbar)$ a {\em standard} subset of $\P^1(\Qpbar)$. This means it is built as follows. You begin with either $\P^1(\Qpbar)$ or a disc $v_p(\L - x) > r$ defined by an open inequality. From that, you remove a finite union of discs $v_p(\L - y) \geq s$ defined by closed inequalities. The outcome is called a {\em connected} standard subset. A finite union of connected standard subsets is called a standard subset (\cite[Definition 3.3.1-3.3.2]{Roz}). 

Based on this discussion, we have two (equivalent) notions of a {\em component} of $X_k^{\pm}(\rbar)$. For one, we can take $\L(U(\Qpbar))$ where $\L$ is the rigid analytic map above and $U \subseteq \mathrm X_{\rbar}^{k,\pm}$ is a rigid analytic component. A second would be that a {\em component} is a connected standard subset in $X_{k}^{\pm}(\rbar)$. The latter definition also makes sense when $\rbar$ has non-scalar endomorphisms, by Proposition \ref{prop:ribets-lemma} (at least if $k > 2$ and $\rbar$ is non-scalar).

\subsection{A local-global principle}
\label{sec:global}

The prior subsections concerned purely local questions on Galois representations. The data gathered in the research underlying this article is global. So, here, we recall how to perceive the $\L$-invariant data for  $\Gamma_0$-eigenforms as a sampling of the components on $X_k^{\pm}(\rbar)$.

Fix
\begin{equation*}
\rhobar : \Gal(\Qbar/\Q) \rightarrow \GL_2(\Fpbar)
\end{equation*}
that is modular of level $\Gamma_0(N)$. Similar to Section \ref{subsec:data-galois}, we will focus on eigenforms $f$ for which $\rhobar_f \simeq \rhobar$. Namely, define $\rbar = \rhobar|_{\Gal(\Qpbar/\Qp)}$, and then define 
\begin{equation*}
X_k^{\pm}(\rhobar) \subseteq X_k^{\pm}(\rbar)
\end{equation*}
as the set of $\L$ such that there exists a $\Gamma_0$-eigenform $f$ of weight $k$ for which $\rhobar_f \simeq \rhobar$ and $\L = \L_f$. (It is automatic then that $a_p(f) = \pm p^{\frac{k}{2}-1}$.) The {\em finite} set of points $X_k^{\pm}(\rhobar)$ is referred to as the {\em modular} points on $X_k^{\pm}(\rbar)$. (Note that this implicitly assumes we have fixed the level $N$ and $\rhobar$ to begin with.)

In situations where modularity lifting theorems are proven via patching methods of Taylor--Wiles and Kisin, we have the following theorem on modular points realizing local components.

\begin{thm}[Kisin]\label{thm:local-global}
Suppose $p > 2$ and $\rhobar|_{\Gal(\Qbar/\Q(\zeta_p))}$ is irreducible, and suppose that $\rbar$ has only scalar endomorphisms. Then, every component of $X_k^{\pm}(\rbar)$ contains a modular point.
\end{thm}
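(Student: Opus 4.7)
The plan is to apply Kisin's method of Taylor--Wiles--Kisin patching to the natural global deformation problem cut out by the hypotheses. I would fix the determinant $\omega^{k-1}$, impose minimal level structure at primes dividing $N$, and at $p$ use the local condition corresponding to the quotient $R_{\rbar}^{k,\pm}$ of Section \ref{subsec:rbar-loci}. The assumption that $\rbar$ has only scalar endomorphisms guarantees an unframed universal local deformation ring, and the assumption that $\rhobar|_{\Gal(\Qbar/\Q(\zeta_p))}$ is irreducible, together with $p>2$, is the standard Taylor--Wiles condition enabling the construction of auxiliary sets of primes. Patching the cohomology of the associated modular curves, localized at $\rhobar$, then produces a nonzero patched module $M_\infty$ over a formal power series ring
\begin{equation*}
R_\infty = R_{\rbar}^{k,\pm}[[x_1,\dotsc,x_h]]
\end{equation*}
for an appropriate number of auxiliary variables $h$.

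The key structural property delivered by the method is that $M_\infty$ is a maximal Cohen--Macaulay $R_\infty$-module. Combined with the equidimensionality of $R_{\rbar}^{k,\pm}$ established by Kisin and Rozensztajn, a numerical dimension count forces the support of $M_\infty$ to be all of $\Spec R_\infty$. In particular, passing to the rigid generic fiber, every irreducible component of $\mathrm X_{\rbar}^{k,\pm}$ lies in the support of the rigidification $M_\infty^{\mathrm{rig}}$.

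To finish I would invoke Zariski density of classical points: a $\Qpbar$-point of the generic fiber of $R_\infty$ projects to a modular point on $\mathrm X_{\rbar}^{k,\pm}$ precisely when the patched Hecke action records a classical system of eigenvalues, and such classical points are dense in each component of the support of $M_\infty^{\mathrm{rig}}$ by the existence of plentiful classical $\Gamma_0$-newforms of the prescribed weight, level, and inertial type $\tau_{\pm}$. Transporting this density back along the bijection $\L : X_k^{\pm}(\rbar) \to \mathrm X_{\rbar}^{k,\pm}(\Qpbar)$ from Section \ref{subsec:rbar-loci} yields a modular point on every component of $X_k^{\pm}(\rbar)$. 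The main obstacle I anticipate is confirming that the patching respects the sign refinement: one must check that $M_\infty$ is built from data sensitive to the distinction between the inertial types $\tau_+$ and $\tau_-$, which in the exceptional case $\rbar \simeq \mathrm{ind}(\omega_2^{k-1})$ requires Rozensztajn's explicit analysis of how the two sign-components of $R_{\rbar}^{k,\mathrm{st}}$ meet at crystalline points.
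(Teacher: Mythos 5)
Your overall plan — Taylor–Wiles–Kisin patching, a Cohen--Macaulay/dimension argument to get full support, then descend to modular points — is the right framework and is also the engine behind the paper's proof, which cites \cite{Kisin-FM}, \cite{Paskunas-BM}, and the argument of \cite[Proposition 3.7]{Calegari-EvenGalois-2}. But there is a genuine gap in your final step, and a structural complication you have created for yourself that the paper sidesteps.

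The gap is in the passage from ``$M_\infty$ has full support on $\Spec R_\infty$'' to ``every component of $\mathrm X_{\rbar}^{k,\pm}$ contains a modular point.'' You assert Zariski density of classical points in each component of the support of $M_\infty^{\mathrm{rig}}$, justified by ``the existence of plentiful classical $\Gamma_0$-newforms of the prescribed weight, level, and inertial type.'' This is circular: the existence of a $\Gamma_0$-newform lifting $\rhobar$ \emph{whose local representation lies on a specified component} of the local deformation space is precisely what is being proved. Moreover, the classical points of $\Spec R_\infty[1/p]$ are not dense in any useful sense — they form a codimension-$h$ locus cut out by the patching variables, i.e.\ they live in the image of $\Spec R^{\mathrm{glob}}[1/p]$. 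The work in Calegari's argument is exactly the commutative algebra of un-patching: using that $M_\infty$ is maximal Cohen--Macaulay over $R_\infty$ \emph{and} finite free over the power series ring $S_\infty$ of the same dimension, one shows that quotienting by the augmentation ideal of $S_\infty$ still leaves a module whose support over $R_{\rbar}^{\rm loc}$ meets every irreducible component. That step is the whole content of the result; ``density of classical points'' does not substitute for it.

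On the structural point: you patch directly with $R_{\rbar}^{k,\pm}$ and then flag the sign-refinement compatibility as the main obstacle. The paper avoids this entirely. It patches with $R_{\rbar}^{k,\mathrm{st}}$, for which the Breuil--M\'ezard conjecture is proven (Kisin and Paskunas), obtains a modular point on every component of $\mathrm X_{\rbar}^{k,\mathrm{st}}$ via Calegari's argument, and then simply observes that $\mathrm X_{\rbar}^{k,\pm}$ is by construction a union of components of $\mathrm X_{\rbar}^{k,\mathrm{st}}$, so the conclusion for the sign-refined ring is immediate. You should adopt this reduction rather than trying to make patching see the $\tau_{\pm}$ refinement directly.
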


The proof of the theorem is based on Kisin's approach to proving the Fontaine--Mazur conjecture via the Breuil--M\'ezard conjecture \cite{Kisin-FM} and is implicit in the proof of \cite[Proposition 3.7]{Calegari-EvenGalois-2}. 
Under the assumptions on $\rhobar$ and $\rbar$, the Breuil--Me\'zard conjecture for $R_{\rbar}^{k,\mathrm{st}}$ was proven by Kisin and Paskunas \cite{Kisin-FM,Paskunas-BM}. Then, the argument in \cite[Proposition 3.7]{Calegari-EvenGalois-2} shows that every component of $\mathrm X_{\rbar}^{k,\mathrm{st}}$ contains a modular point. Since $\mathrm X_{\rbar}^{k,\pm}$ is a union of components in $\mathrm X_{\rbar}^{k,\mathrm{st}}$, Theorem \ref{thm:local-global} follows.
(We learned this argument from the crystalline version of Theorem \ref{thm:local-global} in  \cite[Proposition 5.1]{BG-Survey}.)

 The analysis of Theorem \ref{thm:local-global} relies on the deformation ring $R_{\rbar}^{k,\pm}$. Regardless of $\rbar$, but still making the assumptions on $\rhobar$, the logic above shows that each component of the framed deformation ring $R_{\rbar}^{k,\pm,\square}$ supports a modular point, but it is not clear how to connect such components to what we have defined to be $X_{k}^{\pm}(\rbar)$ (since it is not completely clear how to relate components of $X_{k}^{\pm}(\rbar)$ with those of $\mathrm{Spf}(R_{\rbar}^{k,\pm,\square})^{\mathrm{rig}}$). So, when $\rbar = \chi_1 \oplus \chi_2$, it is not completely clear what to say.

To summarize the impact of Theorem \ref{thm:local-global}, it means that when we look at the scatter plots, their structure is being {\em constrained} in a fairly faithful way by the purely local spaces $X_k^{\pm}(\rbar)$. In the situations where the assumptions on $\rhobar$ are satisfied, the data is a representation of the location of the components in $X_k^{\pm}(\rbar)$ with respect to the $\L$-coordinate on $X_k^{\pm}(\rbar)$. Note, we are not always in the irreducible case (especially for small primes and small tame levels) but in any case the global data is sampling some collection of the components.

\section{The data and Galois representations}
\label{sec:reanalysis}

The goal of this section is to re-discuss the raw data within the framework of Galois representations. What properties definitely reflect theory? Which might we {\em hope} to develop into reflections of theory? And which are perplexing? Answers to these questions are interspersed throughout the discussion that follows.

\subsection{Thresholds}\label{subsec:thresholds-galois}

Here, we revisit the thresholds previously described in Section \ref{subsec:thresholds}. Recall, the data seems to indicate that as $k \rightarrow +\infty$, for all but a thin set of eigenforms, we have
\begin{equation}\label{eqn:threshold-bounds}
-\frac{(p-1)k}{2(p+1)} \leq v_p(\L_f) \leq 0.
\end{equation}
We cannot prove this at the moment, but we can use Galois representations to explain why {\em some} kind of bounds should exist. The main benefit is that it shifts the primary question away from global arithmetic and toward local arithmetic.

First, we can justify why the data in Figure \ref{fig:2_2_raw_huge} and Figures \ref{fig:3_3_raw}-\ref{fig:11_11_raw} all satisfy some lower linear threshold. For terminology, among all
\begin{equation*}
\rbar : \Gal(\Qpbar/\Qp) \rightarrow \GL_2(\Fpbar),
\end{equation*}
the {\em irregular} ones are those of the form $\rbar = \ind(\omega_2^s)$ with $s \in \Z$. All other representations $\rbar$ are called regular. When considering eigenforms of even weight, we have an exact equivalence that $\rbar_f$ is irregular if and only if $\rbar_f$ is irreducible. In that case we call the eigenform $f$ irregular and otherwise call the eigenform regular. These definition are given in \cite{BP-Ghost2,BP-Ghost3}, building on a definition by Buzzard related to eigenforms \cite{Buzzard-SlopeQuestions}. The recent breakthrough of Liu, Truong, Xiao, and Zhao \cite{LTXZ-Ghost,LTXZ-Proof} is focused on regular eigenforms (of certain generic weights modulo  $p$). Here is a theorem on $\L$-invariants of regular eigenforms, based on a theorem of Levin, Liu, and the first author.
\begin{thm}[{\cite{BLL}}]\label{thm:BLL-intext}
If $f \in S_k(\Gamma_0)$ is a regular eigenform, then 
\begin{equation}\label{eqn:threshold-BLL}
 -\frac{(p+1)k}{2(p-1)} + \frac{p+3}{2(p-1)} \leq v_p(\L_f).
\end{equation}
\end{thm}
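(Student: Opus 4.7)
The plan is to prove the contrapositive:\ if $v_p(\L_f)$ violates \eqref{eqn:threshold-BLL}, then $\rbar_f|_{\Gal(\Qpbar/\Qp)}$ must be irreducible, so $f$ is irregular. Since both $\L_f$ and $\rbar_f|_{\Gal(\Qpbar/\Qp)}$ are recovered from the local representation $r_f = r_{k,\L_f}^{\pm}$ by Saito's theorem (see Section \ref{subsec:Linvs-local-Galois}), the theorem reduces to a purely local statement about the mod-$p$ reductions of the Fontaine representations $r_{k,\L}^{\pm}$. Thus everything below is local, depending only on $k$, the sign $\pm$, and $a := v_p(\L)$.

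First I would set up an integral model for $V = V_{k,\L}^{\pm}$ using integral $p$-adic Hodge theory. After a change of basis on $D = D_{\mathrm{st}}(r_{k,\L}^{\pm})$ that absorbs $\L$ into the matrices of $\varphi$ and $N$ (so that the filtration becomes $\Fil^{k-1} D = \langle e_1 + e_2\rangle$, independent of $\L$, at the cost of modifying the Frobenius eigenbasis by a factor of $\L$), one applies Breuil's theory of strongly divisible lattices, or equivalently the Breuil--Kisin module formalism, to construct a $\Gal(\Qpbar/\Qp)$-stable $\Zpbar$-lattice $T \subseteq V$. The correct choice of lattice depends on $a$:\ one scales $e_1$ and $e_2$ by powers of $p$ determined by $a$ so that the resulting matrices for $\varphi$ and $N$ are integral and not simultaneously divisible by $p$.

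Second, I would reduce $T$ modulo the maximal ideal and semi-simplify. The reduction is explicitly computable as a function of $a$, $k$, and the sign $\pm$. The regularity dichotomy translates to whether the reduced Frobenius matrix is conjugate to an upper-triangular matrix (regular) or to an anti-diagonal matrix (irregular, inducing a character from the unramified quadratic extension). Tracking the transition between these two regimes, the $\varphi$-slopes $\tfrac{k}{2}$ and $\tfrac{k}{2}-1$, together with the Hodge--Tate weight $k-1$ and the size $a$ of $\L$, combine to produce the critical value. The factor $(p+1)/(p-1)$ is forced by the fact that the relevant fundamental character $\omega_2$ on inertia has order $p^2-1 = (p-1)(p+1)$, so the comparison of slopes in the two regimes is made with a weight of $p+1$ on one side and $p-1$ on the other; the correction term $\tfrac{p+3}{2(p-1)}$ encodes the integrality rounding in the strongly divisible lattice.

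The main obstacle is the second step:\ an explicit mod-$p$ reduction calculation in the Breuil module as $a$ crosses the critical threshold. This requires case analysis for the two signs, careful handling of the monodromy operator $N$ (which is nontrivial here, unlike the crystalline setting), and a delicate rescaling argument to identify the correct integral lattice precisely at the boundary. It is exactly at the critical value in \eqref{eqn:threshold-BLL} that the reduced Frobenius matrix switches from Borel-type to anti-diagonal, triggering the irreducibility of $\rbar$. This integral $p$-adic Hodge-theoretic computation is, I expect, the technical core of \cite{BLL}.
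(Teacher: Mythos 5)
Your opening reduction is right and matches the paper's logic: by Saito's theorem, regularity of $f$ and the $\L$-invariant both depend only on the local representation $r_f \simeq r_{k,\L_f}^{\pm}$, so the theorem is purely a statement about the mod-$p$ reduction $\rbar_{k,\L}^{\pm}$ as a function of $v_p(\L)$. But after that, you depart from the paper's route. The paper does not re-derive anything in $p$-adic Hodge theory: it simply invokes \cite[Theorem 1.1]{BLL}, which is the explicit local statement that $\rbar_{k,\L}^{\pm} \simeq \ind(\omega_2^{k-1})$ (hence irregular) whenever
\[
v_p(\L) < 2 - \frac{k}{2} - v_p\bigl((k-2)!\bigr),
\]
and then applies a Legendre-formula bound on $v_p((k-2)!)$ to convert this into the stated linear-in-$k$ threshold. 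The factor $\tfrac{p+1}{p-1}$ is thus pure arithmetic:\ $\tfrac{k}{2}+\tfrac{k}{p-1} = \tfrac{(p+1)k}{2(p-1)}$, where $\tfrac{k}{2}$ comes from the Frobenius/Hodge--Tate contribution and $\tfrac{k}{p-1}$ from $v_p((k-2)!)$.

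Your proposal, by contrast, tries to re-prove the local result from scratch via strongly divisible/Kisin modules, and that is where the gap lies. The sketch ends with a qualitative claim (``the reduced Frobenius switches from Borel-type to anti-diagonal at the critical value'') rather than producing the actual cutoff $2-\tfrac{k}{2}-v_p((k-2)!)$, so the specific constants in \eqref{eqn:threshold-BLL} are never established. Moreover, the proposed source of the constant --- the order $p^2-1=(p+1)(p-1)$ of $\omega_2$ --- is not how the arithmetic works; as noted above, the $(p+1)$ and $(p-1)$ arise independently from the Hodge--Tate slope and from Legendre's formula, and their ratio is an accident of combining them over a common denominator, not a feature of the fundamental character. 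The general direction of your sketch (integral $p$-adic Hodge theory, lattice rescaling, explicit mod-$p$ reduction of semi-stable $(\varphi,N)$-modules) is plausibly the content of the cited theorem in \cite{BLL}, as the paper itself remarks, but it is a citation in this paper, not a proof, and your sketch does not fill that gap.
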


It is a basic fact (and a finite computation) that if $p < 59$, then every eigenform of level $\Gamma_0(p)$ is regular. Therefore, Theorem \ref{thm:BLL-intext} implies that the data in Figure \ref{fig:2_2_raw_huge} and Figures \ref{fig:3_3_raw}-\ref{fig:11_11_raw} must all, {\em a priori}, lie above a linearly decreasing threshold. 

However, the threshold provided by Theorem \ref{thm:BLL-intext} is {\em not} the same as the one predicted in \eqref{eqn:threshold-bounds}. The numerators and denominators are in fact nearly swapped between Theorem \ref{thm:BLL-intext} and \eqref{eqn:threshold-bounds}. The discrepancy is not surprising as the method to prove Theorem \ref{thm:BLL-intext} is likely suboptimal. Indeed, it is a corollary of \cite[Theorem 1.1]{BLL}, which is the purely local theorem that
\begin{equation*}
\rbar_{k,\L}^{\pm} = \rbar_{k,\infty}^{\pm} \simeq \ind(\omega_2^{k-1})
\end{equation*}
whenever
\begin{equation}\label{eqn:BLL-local-bound}
v_p(\L) < 2 - \frac{k}{2} - v_p((k-2)!).
\end{equation}
The argument of {\em loc.\ cit.}\ is based on analysis in $p$-adic Hodge theory, and theorems (\cite{BergdallLevin, BLZ}) proven via a similar $p$-adic Hodge theory approach, in the crystalline cases, also produce bounds that are a factor away from realities. (Tight bounds, in crystalline cases and for regular eigenforms are proven in \cite[Theorem 1.12]{LTXZ-Proof}. For a purely local approach to tight bounds, using $p$-adic local Langlands, see \cite[Theorem M]{Arsovski}.)	

Let us summarize the discussion in terms of Section \ref{subsec:rbar-loci}. First, $\rbar_{k,\infty}^{\pm} = \ind(\omega_2^{k-1})$, and therefore $X_k^{\pm}(\ind(\omega_2^{k-1}))$ always contains a component $X_\infty$ around $\L = \infty$. The theorem in \cite{BLL} is a result proving $X_\infty$ contains a disc of some explicit radius. This implies a bound like \eqref{eqn:threshold-BLL} for regular eigenforms, and it makes improving the result of {\em op.\ cit.}\ an interesting project. 

An equally interesting project would be determining whether this framework {\em also} provides evidence toward \eqref{eqn:threshold-bounds} for irregular eigenforms. Note that any eigenform giving a modular point on $X_\infty$ will be an irregular eigenform, and Theorem \ref{thm:local-global} forces such modular points to exist (up to global assumptions). How many points might there be in a fixed global context?  This seems like a tricky question. According to experts we wrote to and spoke with, a version of Theorem \ref{thm:local-global} that quantifies the number of modular points would require new ideas and analysis. Nonetheless, a very weak bound, such as a claim that $X_\infty$ contains at most a constant multiple of $\log(k)$-many modular points, would be enough to imply that Theorem \ref{thm:BLL-intext} holds for 100\% of the eigenforms, as the weight $k \rightarrow +\infty$. (This conclusion would follow also from the distribution conjecture in Section \ref{conj}.)

 And why stop there? Suppose
\begin{equation*}
\rhobar : \Gal(\Qbar/\Q) \rightarrow \GL_2(\Fpbar)
\end{equation*}
is a fixed Galois representation satisfying the assumptions in Theorem \ref{thm:local-global}. For a component $X \subseteq X_{k}^{\pm}(\rbar)$, let $X(\rhobar) = X \cap X_k^{\pm}(\rhobar)$. So, $X(\rhobar)$ is the finite set of $\L$-invariants coming from a global modular weight $k$ lift of $\rhobar$ (with a fixed sign $\pm$) and which belong to the specified local component $X$, i.e.\ $|X(\rhobar)|$ is the number of modular points on $X$. Then, might we expect there is a constant $c = c(\rhobar)$ such that for all $k$ and any component $X \subseteq X_k^{\pm}(\rbar)$,
\begin{equation}\label{eqn:component-bound}
|X(\rhobar)| \leq c \log(k)?
\end{equation}
A stricter bound such as $|X(\rhobar)| \leq c$ would be better from a statistical perspective and obviously would be the best case scenario. But, we have actually seen data and computations of Rozensztajn (private communication) that leads us to believe $|X(\rhobar)|$ is not bounded ranging over all $k$ and all $X$.

To end, we now return to the upper portion of the threshold \eqref{eqn:threshold-bounds}, for we can make sense of it in light of the hope \eqref{eqn:component-bound} and the preceding discussion. Indeed, for each even weight $k$, and each sign $\pm$, we can consider the $p$-adic representation $r_{k,0}^{\pm}$ occurring at $\L=0$. We do not have an explicit formula for $\rbar_{k,0}^{\pm}$, even up to semi-simplification. Nevertheless, there is {\em some} component $X_0 \subseteq X_k^{\pm}(\rbar_{k,0}^{\pm})$ containing $\L = 0$ and that component naturally contains an open disc $D_0 \subseteq X_0$. Therefore, some upper threshold in the style of \eqref{eqn:threshold-bounds} would follow from effective estimates for $D_0$, in the fashion of \cite{BLL}, along with the component-by-component bounds \eqref{eqn:component-bound} at a global level.

Determining the disc $D_0$ presents an interesting challenge. The Breuil--M\'ezard calculations \cite{BM} already show that $k \mapsto \rbar_{k,0}^{\pm}$ is not a simple function of $k$, in contrast to $k \mapsto \rbar_{k,\infty}^{\pm} \simeq \ind(\omega_2^{k-1})$. It is not transparent how to adjust the strategy in \cite{BLL} to hit that kind of moving target. Perhaps one might consider the recent work of Chitrao, Ghate, and Yasuda \cite{CitraoGhateYasuda-Limit} on comparing reductions modulo $p$ of crystalline representations to semi-stable ones as an alternative point of a view. In any case, we end this discussion on thresholds by just saying that the global data suggests that perhaps $D_0$ already contains a disc $v_p(\L) > c_0^{\pm}(k)$ where $c_0^{\pm}(k) \approx -\frac{\log(k)}{\log(p)}$ --- an entire hemisphere of $\L$-invariants in $\P^1$ seems to have constant reduction modulo $p$.

\subsection{Sign dependence and Galois representations}

We now turn to the sign (in)dependence of slopes of $\L$-invariants (as in Section \ref{subsec:signs}). First, we make a positive observation in favor of sign independence. Then, we do the opposite, explaining how to construct situations that {\em must} have sign dependent slopes of $\L$-invariants.

The positive observation is the following.

\begin{prop}\label{prop:irred-sign-independence}
If $\rbar$ is irregular, then $X^+_k(\rbar) = X^-_k(\rbar)$ for all $k\geq 2$.
\end{prop}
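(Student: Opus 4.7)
The strategy is to show that $V_{k,\L}^-$ and $V_{k,\L}^+$ differ by an unramified quadratic twist, and then to exploit the self-duality of $\ind(\omega_2^s)$ under that twist.

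First, I would unwind the definition of $V_{k,\L}^{\pm}$ from Section \ref{subsec:Linvs-local-Galois}. The filtered $(\varphi,N)$-modules $D_{\mathrm{st}}(V_{k,\L}^+)$ and $D_{\mathrm{st}}(V_{k,\L}^-)$ have identical underlying vector spaces, the same filtration, and the same monodromy operator $N$; they differ only in that $\varphi^{-} = -\varphi^{+}$. Under the Colmez--Fontaine equivalence, multiplying $\varphi$ by a scalar $\alpha \in \Qpbar^{\times}$ on an admissible filtered $(\varphi,N)$-module corresponds to tensoring the associated Galois representation by the unramified character $\eta = \unr(\alpha)$ of $\Gal(\Qpbar/\Qp)$. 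Taking $\alpha = -1$, I obtain
\[
V_{k,\L}^{-} \simeq V_{k,\L}^{+} \otimes \eta, \qquad \eta = \unr(-1).
\]

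Second, I would show that if $\rbar = \ind(\omega_2^s)$ is irregular, then $\rbar \otimes \bar\eta \simeq \rbar$, where $\bar\eta$ denotes the reduction of $\eta$ mod $p$. Indeed, $\bar\eta$ is unramified, so its restriction to $\Gal(\Qpbar/\Q_{p^2})$ sends the Frobenius of $\Q_{p^2}$ (which is the square of the Frobenius of $\Qp$) to $(-1)^2 = 1$ and is therefore trivial. By the projection formula,
\[
\ind(\omega_2^s) \otimes \bar\eta \;\simeq\; \ind\bigl(\omega_2^s \cdot \bar\eta|_{\Gal(\Qpbar/\Q_{p^2})}\bigr) \;\simeq\; \ind(\omega_2^s) \;=\; \rbar.
\]
(Note that when $p=2$ the character $\bar\eta$ is automatically trivial, so the conclusion holds tautologically for any $\rbar$; this is a reassuring sanity check.)

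Third, the equality $X_k^+(\rbar) = X_k^-(\rbar)$ then falls out by the obvious lattice argument: if $T^+ \subseteq V_{k,\L}^{+}$ is a $\Gal(\Qpbar/\Qp)$-stable $\Zpbar$-lattice whose reduction is $\rbar$, then $T^+$ is also a Galois-stable lattice inside $V_{k,\L}^{-} \simeq V_{k,\L}^{+} \otimes \eta$ (because $\eta$ takes values in $\Zpbar^{\times}$), and its reduction inside $V_{k,\L}^{-}$ is $\rbar \otimes \bar\eta \simeq \rbar$ by the previous step. So $X_k^+(\rbar) \subseteq X_k^-(\rbar)$, and the reverse inclusion is symmetric.

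The only step requiring real care is the first: tracking the precise convention in which ``change the sign of $\varphi$'' corresponds to ``twist by $\unr(-1)$'' under the Colmez--Fontaine dictionary. This is harmless (both conventions $\unr(-1)$ and $\unr(-1)^{-1}$ agree here since $-1$ is its own inverse), but it is the one place where I would double-check normalizations before publishing.
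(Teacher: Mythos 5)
Your proof is correct and follows essentially the same route as the paper's: the paper's two ``points'' are exactly your unramified-twist identity $V_{k,\L}^- \simeq V_{k,\L}^+ \otimes \unr(-1)$ and the self-duality $\rbar \simeq \rbar \otimes \unr(-1)$, with the paper leaving the $(\varphi,N)$-module verification, the projection-formula step, and the lattice argument implicit where you spell them out. One tiny wrinkle: in your first step, the scalar $\alpha$ should be taken in $\Zpbar^{\times}$ rather than all of $\Qpbar^{\times}$ (otherwise $\unr(\alpha)$ is not a continuous character of the profinite Galois group), but this is harmless for $\alpha = -1$.
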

\begin{proof}
Let $\unr(a)$ denote the unramified character taking value $a$. The first point is that if $\rbar$ is irregular, then $\rbar \simeq \rbar \otimes \unr(-1)$. The second point is that, given any $\L \in \Qpbar$, we also have
\begin{equation*}
r_{k,\L}^+ \simeq r_{k,\L}^- \otimes \unr(-1).
\end{equation*}
This follows from the definitions outlined in Section \ref{subsec:Linvs-local-Galois} along with basic properties of calculations in $p$-adic Hodge theory. From these observations, it follows that $\L \in X_k^+(\rbar)$ if and only if $\L \in X_k^-(\rbar)$.
\end{proof}

Proposition \ref{prop:irred-sign-independence} does not {\em itself} have global implications to eigenforms, but you can still say something. Assume $\rhobar$ is a global mod $p$ representation and $f$ is an irregular $\Gamma_0$-eigenform with $\rhobar_f = \rhobar$. Assume as well that $p > 2$ and $\rhobar|_{\Gal(\Qbar/\Q(\zeta_p))}$ is irreducible. Write $a_p(f) = \pm p^{\frac{k}{2}-1}$. Thus $\L_f \in X_{k}^{\pm}(\rbar) = X_k^{\mp}(\rbar)$ (by Proposition \ref{prop:irred-sign-independence}). So, by Theorem \ref{thm:local-global}, there exists {\em another} $\Gamma_0$-eigenform $g$ with the following properties:
\begin{enumerate}[label=(\roman*)]
\item We have an equality of Galois representations $\rhobar_g = \rhobar = \rhobar_f$.
\item The sign of $a_p$ is $a_p(g) = \mp p^{\frac{k-2}{2}}$.
\item The $\L$-invariants $\L_g$ and $\L_f$ lie on a common component of $X_k^+(\rbar)=X_k^-(\rbar)$.
\end{enumerate}
In particular, if $\L_f$ happens to lie on a component of $X_{k}^{\pm}(\rbar)$ for which $v_p(-)$ is constant, then 
\begin{equation*}
v_p(\L_f) = v_p(\L_g),
\end{equation*}
 producing $\L$-invariants with sign independent slopes. (An example of such a component would be a rational disc containing neither $0$ or not $\infty$.)

In contrast with Section \ref{subsec:thresholds-galois}, this meta-argument here is focused on irregular eigenforms as opposed to regular ones, the ones that our data most commonly represent. And, it certainly does not explain why the sign independence for $\L$-invariants occurs even up to multiplicity.

Now we turn to constructing situations where $\L$-invariants {\em must} exhibit sign dependence. We introduce the following notation. Let $\rhobar$ be a fixed Galois representation modulo $p$. Define 
\begin{equation*}
d_k^{\pm}(\rhobar) = \# \left\{ f ~\Big|~ \ds f  \text{~a~$p$-new~eigenform~in~}  S_k(\Gamma_0) \text{~with $a_p(f) = \pm p^{\frac{k}{2}-1}$~and~$\rhobar_f \simeq \rhobar$} \right\}.
\end{equation*}
Thus, $d_k^{\pm}(\rhobar)$ is equal to the number of eigenforms lifting $\rhobar$ in weight $k$, with a fixed $\pm$-sign. The next lemma clarifies whether or not there exists such eigenforms in weight $k = 2$.

\begin{lemma}\label{lemma:sign-disc-2}
Assume $p > 2$. Suppose $\rhobar$ is modular of level $\Gamma_0(N)$. Then, there is at most one choice of $\pm$ for which $d_2^{\pm}(\rhobar) \neq 0$. More specifically, $d_2^{\pm}(\rhobar) \neq 0$ if and only if 
\begin{equation}\label{eqn:rhobar-local-descr}
\rhobar|_{\Gal(\Qpbar/\Qp)} \simeq \begin{pmatrix} 
\omega & \ast \\ 0 & 1
\end{pmatrix} \otimes \unr(\pm 1).
\end{equation}
\end{lemma}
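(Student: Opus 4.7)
The plan is to combine Saito's theorem on local Galois representations of newforms with a short character-counting comparison. For a newform $f \in S_2(\Gamma_0)$ with $a_p(f) = \pm 1$, Saito's theorem gives $r_f \simeq r_{2,\L_f}^\pm$, and unraveling the filtered $(\varphi,N)$-module recipe of Section \ref{subsec:Linvs-local-Galois} in weight two (with $\varphi = \mathrm{diag}(\pm p, \pm 1)$, nontrivial $N$, and Hodge--Tate weights $\{0,1\}$) identifies this representation with the ordinary, semi-stable non-crystalline extension
\[
r_f \simeq \begin{pmatrix} \omega & \ast \\ 0 & 1 \end{pmatrix} \otimes \unr(\pm 1), \qquad \ast \neq 0.
\]
The ``$+$'' case is the familiar Tate curve picture and the ``$-$'' case is its unramified quadratic twist. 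Reducing modulo $p$ preserves the shape, so $\rhobar_f|_{\Gal(\Qpbar/\Qp)}$ takes the asserted form with the same sign. This proves the ``only if'' direction of the characterization.

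For the ``at most one $\pm$'' clause I would suppose that both $d_2^+(\rhobar)$ and $d_2^-(\rhobar)$ are nonzero and compare semi-simplifications of $\rhobar|_{\Gal(\Qpbar/\Qp)}$. The two shapes produced by the first paragraph would then force
\[
\{\omega,\ 1\} \;=\; \{\omega\cdot\unr(-1),\ \unr(-1)\}
\]
as multisets of characters of $\Gal(\Qpbar/\Qp)$. Because $p > 2$, the character $\omega$ is ramified while $\unr(\pm 1)$ are distinct unramified characters; matching ramified-with-ramified forces $\omega = \omega\cdot\unr(-1)$, i.e.\ $-1 \equiv 1 \pmod p$, contradicting $p$ odd.

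The ``if'' direction is the more substantive step. Given $\rhobar$ modular of level $\Gamma_0(N)$ with $\rhobar|_{\Gal(\Qpbar/\Qp)}$ of the stated form, one must produce a $p$-new eigenform $g \in S_2(\Gamma_0)$ with $\rhobar_g \simeq \rhobar$. The local hypothesis on $\rhobar|_{\Gal(\Qpbar/\Qp)}$ is precisely the one that appears in classical level-raising theorems of Ribet (and in their refinements due to Diamond and Taylor), which supply exactly such a $p$-new lift. Applying the first paragraph to the resulting $g$ then forces $a_p(g) = \pm 1$ with the sign matching the unramified twist in the hypothesis, completing the argument.

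The main obstacle is the level-raising input of the third paragraph; the Saito-based description of $r_f$ and the character-counting argument for uniqueness are both immediate once the setup of Section \ref{subsec:Linvs-local-Galois} is in hand.
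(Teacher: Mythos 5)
Your first two paragraphs are correct and essentially parallel the paper's opening. Where the paper invokes Deligne's theorem on ordinary forms (as recorded by Wiles) to read off the local mod $p$ shape directly, you instead go through Saito's theorem and the $(\varphi,N)$-module description from Section \ref{subsec:Linvs-local-Galois}; both routes land on the same ordinary, upper-triangular picture, so this is a harmless variant (though note that passing from the characteristic zero extension to the reduction requires a sentence about choosing a Galois-stable lattice adapted to the unramified subrepresentation). Your uniqueness argument via matching the two multisets $\{\omega,1\}$ and $\{\omega\cdot\unr(-1),\unr(-1)\}$ is exactly the implicit content of the paper's ``at most one sign'' observation.

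The gap is in the ``if'' direction, and it is not just a matter of filling in a citation. Ribet's level-raising theorem (and the Diamond--Taylor refinements you mention) produce a $p$-new weight $2$ eigenform of level $Np$ starting from a \emph{weight $2$ eigenform of level $\Gamma_0(N)$} with the right congruence on $a_p$. That input does not always exist: when the extension class $\ast$ in \eqref{eqn:rhobar-local-descr} is tr\`es ramifi\'ee, Serre's weight recipe places the minimal lift of $\rhobar$ at level $\Gamma_0(N)$ in weight $p+1$, not weight $2$, so there is nothing for classical level-raising to act on. The paper's proof splits into two cases precisely here: in the peu ramifi\'ee case it applies Ribet level-raising as you suggest, but in the tr\`es ramifi\'ee case it takes the weight $p+1$ ordinary eigenform, runs it through a $p$-adic Hida family, and argues that the weight $2$ specialization must be $p$-new because $\rhobar$ admits no weight $2$ lift at level $N$. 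Your proposal collapses both cases into a single appeal to level-raising and so does not produce the required $p$-new lift when $\ast$ is tr\`es ramifi\'ee. To repair it, you would need either to make the peu/tr\`es ramifi\'ee distinction explicit and supply the Hida-family argument, or to cite a level-raising result that demonstrably starts from a modular $\rhobar$ of arbitrary weight rather than from a weight $2$ form.
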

\begin{proof}
First, if $f$ is a $p$-new eigenform in $S_2(\Gamma_0)$, then $f$ is {\em ordinary} at $p$. Deligne proved in the 1970's that, when $f$ is ordinary of weight $k$, we have
\begin{equation*}
\rhobar_f|_{\Gal(\Qpbar/\Qp)} \simeq \begin{pmatrix} 
\omega^{k-1} & \ast \\ 0 & 1
\end{pmatrix} \otimes \unr(a_p(f)).
\end{equation*}
(See \cite[Theorem 2]{Wiles-OrdinaryModular}.) Therefore, setting $k = 2$ and $a_p(f) = \pm p^{\frac{k}{2}-1} = \pm 1$, we see that if $d_2^{\pm}(\rhobar)$ is non-zero, then $\rhobar$ has the claimed shape. We have proven that there is at most one sign $\pm$ for which $d_2^{\pm}(\rhobar) \neq 0$, and we have identified the action of $\Gal(\Qpbar/\Qp)$ in that case. 

To finish the proof, suppose that $\rhobar$ has the shape given in \eqref{eqn:rhobar-local-descr}. We will show $d_2^{\pm}(\rhobar) \neq 0$. Since $\rhobar$ is modular of level $\Gamma_0(N)$, the weight part of Serre's conjecture offers two possibilities depending on the extension class $\ast$. We argue with either.
\begin{enumerate}[label=(Case \roman*)]
\item Suppose $\ast$ is a tr\`es ramif\'ee extension. Then, $\rhobar$ lifts to a level $\Gamma_0(N)$-eigenform $f$ of weight $p+1$ but {\em not} to any level $\Gamma_0(N)$-eigenform of weight $2$. 

Since $\rbar_f$ is reducible, the eigenform $f$ is ordinary. Therefore, it lives in a $p$-adic Hida family of $\Gamma_0$-eigenforms with weights satisfying $k \equiv 2 \bmod p-1$ and all lifting $\rhobar$. The fiber at $k = 2$ in this family must contain a classical eigenform that is {\em new} at $p$, since $\rhobar$ does not lift to level $N$ in weight $2$. Thus $d_2^{\pm}(\rhobar)$ is non-zero for some $\pm$, and we can tell which by looking at $\rhobar$.
\item Now suppose $\ast$ is a peu ramif\'ee extension. Then, there exists an eigenform $f$ of weight $2$ and level $\Gamma_0(N)$ for which $\rhobar_f = \rhobar$. 

In this case, we see $a_p(f) = \pm 1 \bmod p$, By Ribet's level-raising theorem (see \cite[Theorem 1, point 3.]{Ribet-RaisingTheLevels}), we conclude that there exists a {\em $p$-new} eigenform lifting $\rhobar$, which witnesses $d_{2}^{\pm}(\rhobar) \neq 0$.\qedhere
\end{enumerate}
\end{proof}
The argument of the prior lemma in which we showed there is  at most one sign for which $d_2^{\pm}(\rhobar)$ is non-zero can also be proven purely locally. In fact, the spaces $X_2^{\pm}(\rbar)$ can be read off from \cite[Proposition 4.2.1.1]{BM}. In such a reading, one can see immediately how the $\L$-invariant interacts with whether or not $\rbar$ is peu or tr\'es ramif\'ee.

Lemma \ref{lemma:sign-disc-2} gives a {\em sufficient} condition for there to be sign dependent $\L$-invariants slopes in weight $k = 2$. Namely, if $\rhobar$ satisfies the assumptions in that lemma, then there are sign dependent $\L$-invariant slopes because in fact $X_2^{\pm}(\rhobar)$ is empty for one choice of sign but not both.

To handle questions of higher weight, we introduce the notation
\begin{equation*}
\Delta_k(\rhobar) = d_k^{+}(\rhobar) - d_k^-(\rhobar).
\end{equation*}
Clearly, if $\Delta_k(\rhobar)$ is non-zero, then there {\em must} exist sign dependent slopes of $\L$-invariants for $\Gamma_0$-eigenforms of weight $k$. The next theorem of Anni, Ghitza, and Medvedovsky reduces whether or not $\Delta_k(\rhobar)$ is non-zero in weight $k$, to the same question for twists of $\rhobar$ in smaller weights.

\begin{thm}[{\cite{{AGM}}}]\label{thm:medvedetal}
Fix a prime $p>2$. We have
\begin{equation*}
\Delta_4(\rhobar \otimes \omega) = \begin{cases}
-\Delta_2(\rhobar) + 1 & \text{if $\rhobar = 1 \oplus \omega$;}\\
-\Delta_2(\rhobar) & \text{otherwise}.
\end{cases}
\end{equation*}
If $k\geq 4$, then 
\begin{equation*}
\Delta_{k+2}(\rhobar \otimes \omega) = -\Delta_{k}(\rhobar).
\end{equation*}
\end{thm}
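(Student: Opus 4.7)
The plan is to reinterpret $\Delta_k(\rhobar)$ as a trace of the Atkin--Lehner involution $W_p$ and then use Serre's theory of mod $p$ modular forms---especially the theta operator---to transport traces between the weights and twists appearing in the statement. The starting point is that for any $p$-new normalized eigenform $f \in S_k(\Gamma_0)$ one has $a_p(f) = -w_p(f)\,p^{k/2-1}$, where $w_p(f) \in \{\pm 1\}$ is the Atkin--Lehner eigenvalue, so
\begin{equation*}
\Delta_k(\rhobar) = -\tr\bigl( W_p \,\bigm|\, S_k^{p\text{-new}}(\Gamma_0)_{\rhobar} \bigr),
\end{equation*}
where the subscript denotes localization at the maximal Hecke ideal attached to $\rhobar$. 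The theorem then becomes a relation between two Atkin--Lehner traces, in different weights and with twisted Galois representations on the $\rhobar$-localizations.

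The first substantive step is to construct a correspondence between mod $p$ Hecke eigensystems of weight $k$ attached to $\rhobar$ and those of weight $k+2$ attached to $\rhobar \otimes \omega$. The natural tool is Serre's theta operator $\theta = q\,d/dq$, which raises weight by $p+1$ while twisting the attached Galois representation by $\omega$, combined with division by the Hasse invariant $A$ of weight $p-1$. Alternatively one can bypass the operators themselves and work directly with the eigensystems: if $\{a_\ell\}$ is a mod $p$ system in weight $k$ attached to $\rhobar$, then $\{\ell\cdot a_\ell\}$ is a system in weight $k+2$ attached to $\rhobar \otimes \omega$, and Jochnowitz's theta cycle theorem controls precisely in which weights a given system appears. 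Applying Ribet's level-raising theorem to pass to $\Gamma_0(Np)$ should transport this matching to one between $p$-new eigensystems on the two sides.

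The core technical step, and the one I expect to be the main obstacle, is to show that this matching \emph{anti-commutes} with $W_p$, i.e.\ that it flips the Atkin--Lehner sign. This is what produces the overall minus sign in the theorem. The interaction between the theta operator and $W_p$ is subtle: both operate at $p$, and the relation must be pinned down by a local calculation. I would attempt this via $p$-adic Hodge theory, tracking how the Weil--Deligne parameter---in particular the monodromy and the Frobenius eigenvalue at $p$---changes under a twist by $\omega$ combined with a shift of Hodge--Tate weight by $2$. Equivalently, through the mod $p$ local Langlands correspondence one should verify directly that the local $\mathrm{GL}_2(\Qp)$-representation attached to a $p$-new eigenform changes Atkin--Lehner sign under the matching.

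Finally, the $k=2$ correction for $\rhobar = 1 \oplus \omega$ would be handled as an explicit exception. The ``missing'' eigenform on the weight $2$ side is the weight $2$ Eisenstein series at level $\Gamma_0(p)$: its attached mod $p$ representation is exactly $1 \oplus \omega$, but it is non-cuspidal and hence contributes nothing to $\Delta_2(1 \oplus \omega)$. Its image under the correspondence, however, is a genuine $p$-new cuspform in weight $4$ with Galois representation $\omega \oplus \omega^2$, contributing $+1$ to $\Delta_4(\omega \oplus \omega^2)$ once its sign of $a_p$ is computed. A parallel route worth pursuing is via the Jacquet--Langlands transfer to the definite quaternion algebra ramified at $p$ and $\infty$, where the $p$-new cuspforms of level $Np$ become finite-dimensional spaces of automorphic functions on ideal class sets and both operations---``twist by $\omega$'' and ``shift of weight by $2$''---admit direct combinatorial descriptions, potentially giving the cleanest explicit bijection.
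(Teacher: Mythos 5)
Your reinterpretation of $\Delta_k(\rhobar)$ as $-\tr\bigl(W_p \mid S_k^{p\text{-new}}(\Gamma_0)_{\rhobar}\bigr)$ via $a_p(f) = -w_p(f)\,p^{k/2-1}$ is correct and a sensible start. But the proposal has a genuine gap at exactly the spot you flag as ``the main obstacle'': you never establish that your weight-raising/twist correspondence flips the Atkin--Lehner sign, and this sign flip \emph{is} the theorem. Saying you would ``attempt this via $p$-adic Hodge theory'' or ``mod $p$ local Langlands'' is a research plan, not an argument; without it you have not produced a proof of either of the two displayed identities.

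There are also two concrete obstructions to the route as described. First, the theta operator raises the filtration by $p+1$, not $2$; the ``divide by the Hasse invariant'' step that would reduce this to a weight-$2$ shift requires $\theta f$ to be divisible by $A$, which is false generically (Jochnowitz's theta-cycle analysis shows the filtration of $\theta f$ equals $k+p+1$ except at special weights where it drops). So the passage from an eigensystem in weight $k$ to the \emph{same} mod $p$ eigensystem in minimal weight $k+2$ is not automatic and would itself require a weight-by-weight filtration argument. Second, even granting a bijection of mod $p$ eigensystems, $\Delta_k(\rhobar)$ counts classical $p$-new eigenforms, and a bijection of mod $p$ eigensystems between weights $k$ and $k+2$ does not by itself preserve multiplicities of classical lifts, let alone anti-commute with $W_p$; some control on congruence multiplicities is needed.

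It is also worth noting that the approach you sketch differs in kind from what the paper cites. The reference [AGM] proves this by the Eichler--Selberg trace formula: one writes $\Delta_k(\rhobar)$ in terms of $\tr(T_\ell W_p)$ on $\rhobar$-localizations and establishes $p$-power congruences between these traces across the weight shift $k \mapsto k+2$ and the twist $\rhobar \mapsto \rhobar\otimes\omega$, which yields isomorphisms of semi-simplified virtual mod $p$ Hecke modules whose signed dimensions encode $\Delta_k$. That method works directly with trace identities and never needs a form-by-form bijection, sidestepping exactly the filtration and sign issues that your theta-cycle approach must confront head-on. If you want to pursue your route, the Jacquet--Langlands avenue you mention at the end is probably the most promising way to make the $W_p$-sign computation tractable, since on the definite quaternion algebra the Atkin--Lehner involution has a clean combinatorial description; but as written the proposal does not constitute a proof.
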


The proof of this theorem uses the trace formula to establish $p$-power congruences between traces of Hecke operators to deduce isomorphisms between semi-simple (virtual) mod $p$ Hecke-modules whose dimensions encode the $\Delta_k(\rhobar)$.

\begin{cor}
Assume $\rhobar$ is modular of level $\Gamma_0(N)$ but not isomorphic to $1 \oplus \omega$. Then $\Delta_k(\rhobar) = 0$ for all $k \geq 2$ if and only if
\begin{equation*}
\rhobar|_{\Gal(\Qpbar/\Qp)} \not\simeq \begin{pmatrix} \omega & \ast \\ 0 & 1 \end{pmatrix} \otimes \unr(\pm 1) \otimes \omega^j,
\end{equation*}
for any choice of $\pm$ and any integer $j$.
\end{cor}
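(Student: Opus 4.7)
The plan is to combine Lemma \ref{lemma:sign-disc-2} with a repeated application of Theorem \ref{thm:medvedetal}. My first step is to reformulate the right-hand condition. The shape $\begin{pmatrix} \omega & * \\ 0 & 1 \end{pmatrix} \otimes \unr(\pm 1) \otimes \omega^j$ is exactly $\omega^j$-twisted from the shape appearing in Lemma \ref{lemma:sign-disc-2}, so applying the lemma to every $\rhobar \otimes \omega^{-j}$ shows that the right-hand condition of the corollary is equivalent to the statement
\[
\Delta_2(\rhobar \otimes \omega^{-j}) = 0 \qquad \text{for every integer } j.
\]
(Indeed, at most one sign witnesses non-vanishing, by Lemma \ref{lemma:sign-disc-2}, so $\Delta_2(\rhobar')$ is non-zero precisely when $\rhobar'|_{\Gal(\Qpbar/\Qp)}$ lies in the shape for some choice of $\pm$.)

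Next I unfold $\Delta_k(\rhobar)$. Iterating the second clause of Theorem \ref{thm:medvedetal} gives, for every even $k\geq 4$,
\[
\Delta_k(\rhobar) = (-1)^{(k-4)/2}\,\Delta_4(\rhobar \otimes \omega^{-(k-4)/2}),
\]
and substituting into the first clause of the theorem turns this into
\[
\Delta_k(\rhobar) = (-1)^{(k-2)/2}\,\Delta_2(\rhobar \otimes \omega^{-(k-2)/2}) + (-1)^{(k-4)/2}\,\bigl[\rhobar \simeq \omega^{(k-2)/2} \oplus \omega^{k/2}\bigr].
\]
(For $k=2$, $\Delta_2(\rhobar)$ is one of the terms already accounted for by the reformulated right-hand condition.)

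For the ``if'' direction, assume the right-hand condition of the corollary. The reformulation above forces the first term on the right-hand side of the unfolded formula to vanish for every $k$. For the correction, the key point is that any semi-simple representation of the form $\omega^a \oplus \omega^{a+1}$ has $\rhobar|_{\Gal(\Qpbar/\Qp)}$ in the excluded shape with $\ast=0$, $\pm=+1$, and $j=a$; by hypothesis $\rhobar$ is not of this local shape for any twist, so in particular $\rhobar \not\simeq \omega^{(k-2)/2}\oplus \omega^{k/2}$ globally, and the bracket also vanishes. Hence $\Delta_k(\rhobar)=0$ for all $k\geq 2$.

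For the converse, suppose the right-hand condition fails and choose an integer $j_0$ for which $\alpha := \Delta_2(\rhobar \otimes \omega^{-j_0})$ is non-zero. If $j_0\equiv 0 \pmod{p-1}$ then $\Delta_2(\rhobar)\neq 0$ and we are done, so assume $j_0 \not\equiv 0$. Taking $k = 2j_0 + 2$ (shifting by a multiple of $2(p-1)$ if needed to ensure $k \geq 4$, which is harmless because the unfolded formula is periodic in $k$ of period $2(p-1)$), the formula collapses to
\[
\Delta_k(\rhobar) = \pm\bigl(-\alpha + \bigl[\rhobar \simeq \omega^{j_0}\oplus \omega^{j_0+1}\bigr]\bigr).
\]
Generically---whenever $\rhobar$ is irreducible, a non-trivial extension, or has an unramified twist distinguishing it from $\omega^{j_0}\oplus \omega^{j_0+1}$---the bracket is zero and $\Delta_k(\rhobar) = \mp\alpha \neq 0$. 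The main obstacle is the degenerate case $\rhobar \simeq \omega^{j_0}\oplus \omega^{j_0+1}$ globally with $j_0 \not\equiv 0$, where the bracket contributes $+1$ and the expression collapses to $\pm(1-\Delta_2(1\oplus \omega))$. Here one must show $\Delta_2(1\oplus \omega) \neq 1$ at the same tame level $N$. The expected route is to leverage the modularity of $\rhobar$ along with the hypothesis $\rhobar \not\simeq 1\oplus \omega$ to produce, via Ribet-style level raising from a weight-$k_0$ form lifting $\omega^{j_0}\oplus \omega^{j_0+1}$, enough independent $p$-new weight-$2$ forms lifting $1\oplus \omega$ to guarantee $\Delta_2(1\oplus \omega) \geq 2$. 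I expect this degenerate subcase to be the hardest part to nail down rigorously; every other case falls out cleanly from the two-step combinatorial unfolding above.
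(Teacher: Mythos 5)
Your two-step unfolding of Theorem \ref{thm:medvedetal} is correct, and it is in fact more careful than the paper's own one-line derivation. The paper asserts that $\rhobar \not\simeq 1\oplus\omega$ lets one write $\Delta_k(\rhobar) = (-1)^{k/2-1}\Delta_2(\rhobar\otimes\omega^{1-k/2})$ for all $k\geq 2$ and then invokes Lemma \ref{lemma:sign-disc-2}; your formula makes explicit that the ``$+1$'' correction in the first clause of Theorem \ref{thm:medvedetal} fires at every weight $k$ with $\rhobar\otimes\omega^{1-k/2}\simeq 1\oplus\omega$ --- i.e., whenever $\rhobar$ is \emph{any} cyclotomic twist $\omega^a\oplus\omega^{a+1}$ of $1\oplus\omega$, not only $\rhobar\simeq 1\oplus\omega$ itself. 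The hypothesis of the corollary does not exclude, say, $\rhobar\simeq\omega\oplus\omega^2$, so the bracket term you keep is genuinely there; the paper's displayed formula simply omits it. Your reformulation of the right-hand condition via Lemma \ref{lemma:sign-disc-2} and your ``if'' direction are both correct.

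The gap is in the degenerate case of the converse, and you are right that it is genuine rather than a formality. When $\rhobar\simeq\omega^{j_0}\oplus\omega^{j_0+1}$ globally with $j_0\not\equiv 0$, your formula gives $\Delta_{2j_0+2}(\rhobar)=\pm\bigl(1-\Delta_2(1\oplus\omega)\bigr)$, and switching to the other admissible residue class $k\equiv 2j_0+2+(p-1)\pmod{2(p-1)}$ does not rescue you: there the relevant twist is $\omega^{(p-1)/2}\oplus\omega^{(p+1)/2}$, whose ramified quadratic factor $\omega^{(p-1)/2}$ cannot be absorbed into the unramified $\unr(\pm 1)$ appearing in the shape of Lemma \ref{lemma:sign-disc-2}, so $\Delta_k=0$ in that class as well. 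Hence the converse really does reduce to proving $\Delta_2(1\oplus\omega)\geq 2$ at the given tame level. Lemma \ref{lemma:sign-disc-2} only yields $\Delta_2(1\oplus\omega)\geq 1$, and your level-raising sketch is a hope, not an argument. Until that inequality is supplied, the converse is not established in this case --- and since the paper's own proof appears to elide the same correction term, this is a point that would be worth raising with the authors rather than attempting to paper over.
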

\begin{proof}
Since $\rhobar \not\simeq 1 \oplus \omega$, we can re-write Theorem \ref{thm:medvedetal} to see that
\begin{equation*}
\Delta_k(\rhobar) = (-1)^{\frac{k}{2}-1}\Delta_2\left(\rhobar \otimes \omega^{1-\frac{k}{2}}\right)
\end{equation*}
for all integers $k\geq 2$. The corollary then follows from Lemma \ref{lemma:sign-disc-2}.
\end{proof}

\subsection{Galois, II}\label{subsec:galois}

The final goal of this section is to discuss the periodic patterns that appear in the plot of Figure \ref{fig:5_5_Galois_representations}.  Looking at that figure (where $p=5$), one sees that along vertical lines, the plot alternates between $1 \oplus \omega$ and $\omega^2 \oplus \omega^3$ in weights $k \equiv 2 \pmod{4}$ and between $1 \oplus \omega^3$ and $\omega \oplus \omega^2$ in weights $k \equiv 0 \pmod{4}$.  What is causing this behavior?

From the perspective of Sections \ref{subsec:rbar-loci} and \ref{sec:global}, our lists of $\L$-invariants are a sampling of components of the various $\rbar$-loci.  Thus patterns in the local representations that appear as one fixes the weight and varies the valuation of the $\L$-invariant reflect how these components sit together within the entire $\P^1$ of $\L$-invariants.  

However, the sampling arising from the data in Figure \ref{fig:5_5_Galois_representations} misses many components for two reasons.  First, the data set used to create this figure only contains regular eigenforms (that is, forms whose residual representation at $p$ is reducible).  Thus, all $\rbar$-components with $\rbar$ irreducible are missed.  Second, all of the forms in this data set have {\it globally} reducible residual representations.  In particular, Theorem \ref{thm:local-global} does not apply and this could lead to further components being missed.  

To really understand how the local residual representations vary as one varies the slope of the $\L$-invariant, one needs to consider $\L$-invariants of a larger set of eigenforms.  Namely, we should consider eigenforms with globally irreducible residual representations and to have as much diversity as possible for the local action at $p$ (including both irreducible and reducible representations, split and non-split representations, etc.).

Let's focus on $p=5$ and the case of $k \equiv 2 \pmod{4}$ so that the determinant of our representations is $\omega$.  In this case, there are only three irreducible representations of $G_{\Qp}$ over $\F_5$, namely:\ $\ind(\omega_2)$, $\ind(\omega_2^9)$, and $\ind(\omega_2^{13})$. In the reducible case, there are infinitely many such representations as the image of Frobenius itself has infinitely many possibilities.  But if we instead consider the restriction of these reducible representations to the inertia group $I_p$, then the only options are $1 \oplus \omega$ and $\omega^2 \oplus \omega^3$, up to semi-simplification.

By varying the tame level $N$, we can find eigenforms representing each local possibility. Namely, we consider $\rhobar$ listed in Table \ref{table:rhobar-simple}. Their corresponding modular lifts is given by the LMFDB labels.

\begin{table}[htp]
\renewcommand{\arraystretch}{1.2}
\caption{A specification of three global Galois representations modulo 5.}
\begin{center}
\begin{tabular}{|c|c|c|c|c|}
\hline
$\rhobar$-label & $N$ & $k$ & LMFDB label & local at $5$ representation \\
\hline
$\rhobar_1$  & $14$ & 4 & 14.4.a.b &  $\begin{pmatrix} \unr(2)\omega^3 & \neq 0 \\ 0 & \unr(3) \end{pmatrix}$\\
$\rhobar_2$  & $14$ & 2 & 14.2.a.a & $\ind(\omega_2)$\\
$\rhobar_3$  & $9$ & 4 & 9.4.a.a & $\ind(\omega_2^3)$\\
\hline
\end{tabular}
\end{center}
\label{table:rhobar-simple}
\end{table}

Then, we consider five global representations:
\begin{equation}
\label{eqn:global_ordered}
\rhobar_2,\rhobar_1(1),\rhobar_3(1),\rhobar_1(3),\rhobar_2(2).
\end{equation}
The corresponding local representations up to inertia and semi-simplification are given by
\begin{equation}
\label{eqn:local_ordered}
\ind(\omega_2),1 \oplus \omega,\ind(\omega_2^9),\omega^2 \oplus \omega^3,\ind(\omega_2^{13}).
\end{equation}
We intentionally list the representations in this order, for reasons that will become clear soon.

Examining the $5$-adic $\L$-invariants in level $\Gamma_0(70)$ and $\Gamma_0(45)$, we compiled $\rhobar$-data for each $\rhobar$ in the list \eqref{eqn:global_ordered}, up to the weight $k = 44$. Since the tame level is varying, we ignore any multiplicity in the data and plot simply the valuations of the possible $\L$-invariants, in Figure \ref{fig:5_interweaving}.

\begin{figure}[htbp]
\centering
\includegraphics[scale=.75]{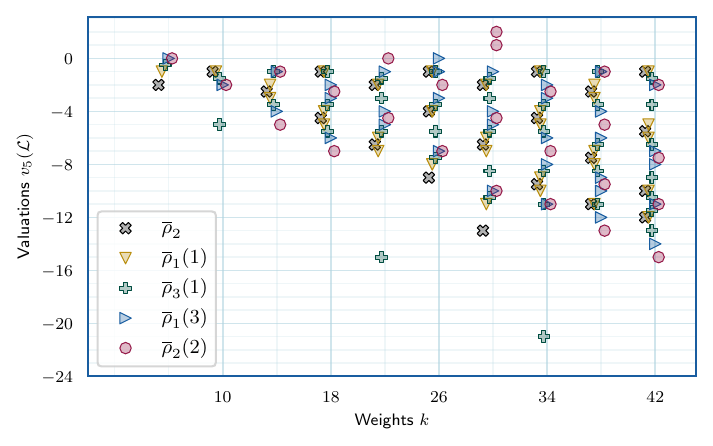}
\caption{Values of $v_p(\L_f)$ for $f$ an eigenform of weight $6\leq k \leq 44$, with $k \equiv 2 \bmod 4$, representing an eigenform lifting some $\rhobar$, up to twist, from Table \ref{table:rhobar-simple}.}
\label{fig:5_interweaving}
\end{figure}

The above plot shows incredible regularity in each fixed weight.  (Note, we offset data over a fixed weight, to make everything as easy as possible to read.) Let's focus on the particular weight $k=18$.  Starting at the most negative $v_5(\L)$ and moving upwards, the data follows the exact order in which the plot's legend is constructed. In terms of the local $\rbar$, we see the pattern
\begin{multline}
\label{eqn:k18}
\ind(\omega_2^{13}), \omega^2 \oplus \omega^3, \ind(\omega_2^9), 1 \oplus \omega, \ind(\omega_2),
1 \oplus \omega, \ind(\omega_2^9), \omega^2 \oplus \omega^3,\\ \ind(\omega_2^{13}), \omega^2 \oplus \omega^3, \ind(\omega_2^9), 1 \oplus \omega, \ind(\omega_2).
\end{multline}
First, note that the representations listed above strictly alternate between reducible and irreducible.  Second, note that $\ind(\omega_2^9)$ is always surrounded by $1 \oplus \omega$ and $\omega^2 \oplus \omega^3$ while $\ind(\omega_2)$ (resp.\ $\ind(\omega_2^{13})$) is surrounded on both sides by  $1 \oplus \omega$ (resp.\  $\omega^2 \oplus \omega^3$). Such patterns continue throughout Figure \ref{fig:5_interweaving}. Why should such regular behavior occur?

\setlength{\intextsep}{2pt}
\begin{wrapfigure}{r}{0.5\textwidth}
\centering
\includegraphics[scale=1.2]{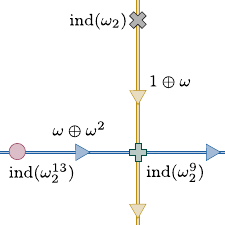}
\caption{Illustration of Emerton and Gee's stack, collapsed up to semi-simplification.}
\label{fig:EG-stack}
\end{wrapfigure}

We can start to explain this phenomenon using Figure \ref{fig:EG-stack}. What we are seeing in the data is that the $\rbar$ are arranging themselves in exactly the way they should, given the geometry of the moduli stack of 2-dimensional $\Gal(\Qpbar/\Qp)$-representations over $\Fp$, constructed by Emerton and Gee \cite{EG}.   To explain, we continue to fix $p=5$ and consider mod $p$ Galois representations with determinant equal to $\omega$ (so the modular weights are $k \equiv 2 \bmod 4$). Emerton and Gee's stack has a geometry governed by Serre weights of mod $p$ Galois representations \cite[Theorem 1.2.1]{EG}. If we collapse that geometry by considering everything up to semi-simplification we arrive at a geometric configuration with only two components.\footnote{We are not giving a moduli interpretation of this construction. This is only an illustration.} One component corresponds to mod $p$ representations that generically match $1 \oplus \omega$ (on inertia and up to semi-simplification), while the other matches $\omega^2 \oplus \omega^3$.  These two components meet at the irreducible $\ind(\omega_2^9)$.  The representation $\ind(\omega_2)$ lies on the first component while $\ind(\omega_2^{13})$ is on the second.  The Figure \ref{fig:EG-stack} illustrates the stack's geometry after the collapsing is invoked.

This geometric configuration matches the patterns in our data perfectly. The representation $\ind(\omega_2)$ is surrounded by representations that generically look like $1 \oplus \omega$, while $\ind(\omega_2^{13})$ is generically surrounded by representations that look like $\omega^2 \oplus \omega^3$. And, of course, $\ind(\omega_2^9)$ is the one sitting in between the two reducible possibilities. Returning to Theorem \ref{thm:local-global}, recall that our global data samples the $\rbar$-loci in the $\P^1$ of $\L$-invariants. In short, we are proposing (and seeing in reality) that the $\rbar$-components abut each other in a fashion perfectly mirrored by the stack. The issue of collapsing everything up to semi-simplification is explained, more or less, by Proposition \ref{prop:ribets-lemma}.

This discussion was motivated by patterns found in Figure \ref{fig:5_5_Galois_representations}. In that case, our data was data at level $\Gamma_0(5)$, where all the eigenforms lift globally {\em reducible} representations. If we ignore the irreducible $\rbar$ in the preceding discussion and remove them from \eqref{eqn:k18}, we would expect Figure \ref{fig:5_5_Galois_representations} to reveal a pattern
\begin{equation*}
\dots 1 \oplus \omega, 1 \oplus \omega, \omega^2 \oplus \omega^3, \omega^2 \oplus \omega^3, 1 \oplus \omega, 1 \oplus \omega, \omega^2 \oplus \omega^3, \omega^2 \oplus \omega^3, \dots
\end{equation*}
in the general weight fiber of the scatter plot. If you look closely, you will see this does not happen. In fact, Figure \ref{fig:5_5_Galois_representations} shows a {\em strict} alternation between $1 \oplus \omega$ and $\omega^2 \oplus \omega^3$.  We resolve this issue as follows. What we are seeing is a concrete manifestation of the failure of Theorem \ref{thm:local-global} to hold in the context of globally reducible Galois representations. What is fascinating to us is that this failure is {\em completely systematic} --- according to our global data we might suspect that the $\Gamma_0(5)$-eigenforms hit roughly {\em half} the components on the $\rbar$-loci and the half that they miss are systematically arranged.

\begin{remark}
While writing the above discussion, we noticed a simple heuristic for why Theorem \ref{thm:local-global} fails for globally reducible Galois representations.  To explain, let us start by observing that
\begin{equation*}
\dim_{\C} S_k(\Gamma_0(5),\C)^{\new} \approx \frac{k}{3}.
\end{equation*}
Here we write $(-)^{\new}$ to mean the span of newforms. Switching to $p=5$ and the $p$-adics, we use $(-)_{\rhobar}$ to denote the $\rhobar$-component. In these notations, if $\rhobar = \omega^j \oplus \omega^{j+1}$, and $\rhobar$ appears in weight $S_k(\Gamma_0(5))$, then we have
\begin{equation*}
\dim_{\Qpbar} S_k(\Gamma_0(5),\Qpbar)^{\new}_{\rhobar} \approx \frac{k}{6}.
\end{equation*}

Now suppose instead that $\rhobar$ is modular of level $\Gamma_0(N)$ with $N$ prime to $5$, and $\rhobar$ satisfies the assumptions of Theorem \ref{thm:local-global} (and so, in particular, is globally irreducible), but still $\rhobar|_{\Gal(\Qpbar/\Qp)} = \omega^j \oplus \omega^{j+1}$. Then,
\begin{equation*}
\dim_{\Qpbar} S_k(\Gamma_0(5N),\Qpbar)_{\rhobar} \approx \frac{\mu k}{6},
\end{equation*}
where $\mu = \dim_{\Qpbar} S_2(\Gamma_0(N),\Qpbar)_{\rhobar}  + \dim_{\Qpbar} S_{p+1}(\Gamma_0(N),\Qpbar)_{\rhobar}$. For this dimension count, see \cite[Corollaries 6.11 and 6.17]{BP-Ghost2}. In any case, Serre's conjecture implies $\mu \geq 1+1=2$ and so  $\dim_{\Qpbar} S_k(\Gamma_0(5N),\Qpbar)_{\rhobar}$ is asymptotically at least $k/3$. 

To summarize, given the same local Galois-theoretic setup and assuming minimal possible global multiplicities, the number of eigenforms lifting a globally irreducible $\rhobar$ will be roughly twice the number you would find lifting a globally reducible $\rhobar$. In terms of deformation spaces, one option is that only ``half'' the local components are hit by modular points in the globally reducible situations. The other option would be the modular points in the globally irreducible case overrepresent the local components. The second option is ruled out by explicit computations in low weights, lending credence to the first option. Of course, this kind of numerology does nothing to explain why the missing ``half'' arranges itself so systematically with respect to weight variation, in the case where the first option is valid.
\end{remark}

\begin{remark}
There is no reason to limit the preceding discussion to $\L$-invariants and semi-stable Galois representations. In fact, we could have also carried out the same analysis of slope data for $v_p(a_p(f))$ with $f$ an eigenform of level prime to $p$. We in fact did that, as we worked through the discussion here and our conclusions are the same.

We are also not the only ones to notice this kind of cyclic nature. For instance, Ghate's zig zag conjecture \cite{Ghate-ZigZag} seems to be a highly-tuned and precise version of the systematic behavior we are explaining here. As far as we know, Ghate has not expressed his conjecture in terms of the geometry of the moduli stack of Galois representations, but the pattern he proposes (at a qualitative level) {\em is} explained by the geometry.
\end{remark}

\section{The distribution of $\L$-invariants}\label{sec:conjecture}

The primary goal of this section is to formulate a conjecture on the distribution of $v_p(\L_f)$ as $f$ ranges over all $\Gamma_0$-eigenforms of weight $k \rightarrow +\infty$. After formulating the $\L$-invariant conjecture, we provide numerical {\em and} heuristic evidence, which links the distribution of slopes of $\L$-invariants to Gouv\^ea's distribution conjecture.

\subsection{Gouv\^ea's distribution conjecture}

To frame the discussion, we first formally recall the $\rhobar$-version of Gouv\^ea's conjecture on slopes of $a_p$ from Section \ref{subsec:nonarch-dist}. Recall $\Gamma_0 = \Gamma_0(Np)$ with $p \nmid N$. Assume
\begin{equation*}
\rhobar : \Gal(\Qbar/\Q) \rightarrow \GL_2(\Fpbar)
\end{equation*}
is modular of level $\Gamma_0(N)$. Then for each number $T$ we can look at
\begin{equation*}
\mathbf x_T(\rhobar) = \left\{\frac{p+1}{k} \cdot v_p(a_p(f)) ~\Big|~ \parbox{7cm}{\centering $f$ is a $\Gamma_0(N)$-newform of weight  $k \leq T$ such that $\rhobar_f \simeq \rhobar$} \right\}.
\end{equation*}

\begin{conj}[Gouv\^ea's distribution conjecture]\label{conj:gouveas}
The sets $\mathbf x_T(\rhobar)$ are equidistributed on $[0,1]$ for Lebesgue measure as $T\rightarrow \infty$.
\end{conj}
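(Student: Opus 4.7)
The plan is to follow the strategy of Liu–Truong–Xiao–Zhao, reducing the conjecture to a purely combinatorial statement about the Newton polygon of the authors' ghost series, and then carrying out an asymptotic analysis on that combinatorics. First I would invoke (or establish, in cases where it is not yet known) the ghost conjecture of \cite{BP-Ghost,BP-Ghost2}: when $\rhobar$ is regular, the multi-set of slopes $\{v_p(a_p(f))\}$ over weight-$k$ $\Gamma_0(N)$-newforms $f$ with $\rhobar_f \simeq \rhobar$ equals the slopes of the Newton polygon of a combinatorially defined series $G_{\rhobar}(w,t) = \sum_n g_n(w) t^n$ specialized at the weight-disk parameter $w = w_k$. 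This is precisely the mechanism underlying Theorem \ref{thm:LTXZ-application}.

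Next I would analyze the ghost Newton polygon asymptotically. Each $g_n(w)$ is an explicit polynomial whose zeros $w_{k'}$ occur with combinatorially defined multiplicities $m(k',n)$, and after specialization at $w = w_k$ the $n$th slope decomposes as $\sum_{k'} m(k',n)\, v_p(w_k - w_{k'})$. Almost all contributions vanish unless $k'$ is $p$-adically close to $k$, so slopes are governed by the multiplicity pattern rather than by the specific choice of weight. The total length of the Newton polygon grows asymptotically like $\tfrac{k\, d_k}{2(p+1)}$, where $d_k$ is the dimension of the $\rhobar$-isotypic component of $S_k(\Gamma_0(N))^{\new}$; this is the quantitative origin of both the normalization factor $\tfrac{p+1}{k}$ and the support on $[0,1]$.

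Equidistribution itself would then follow from a direct calculation on the ghost multiplicity pattern, as performed in \cite[Corollary 3.3]{BP-Ghost2}. Concretely, the empirical measure of normalized slopes in weight $k$ becomes the pushforward of the uniform measure on $\{1,\dots,d_k\}$ along a piecewise linear function whose graph converges, as $k \to \infty$, to a fixed affine map with image $[0,1]$. Averaging over $k \leq T$ would then yield the equidistribution of $\mathbf x_T(\rhobar)$.

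The principal obstacle is removing the regularity hypothesis on $\rhobar$. When $\rhobar|_{\Gal(\Qpbar/\Qp)}$ is irreducible --- i.e., when $\rhobar$ is irregular --- there is no known ghost-series analogue, no combinatorial model for the Newton polygon, and slope behavior empirically appears substantially more erratic. Establishing the conjecture in this case would seem to demand a genuinely new technique, perhaps combining automorphic density results with a quantitative handle on slopes, or an extension of the ghost philosophy to the irreducible locus of Emerton--Gee's stack. A secondary and more tractable obstacle is removing the genericity condition $2 \leq a \leq p-5$ from the known proof of the ghost conjecture in Theorem \ref{thm:LTXZ-application}, which is expected to require technical rather than conceptual improvements.
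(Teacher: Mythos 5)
The statement you are addressing is an open conjecture, not a theorem: the paper records Gouv\^ea's distribution conjecture (Conjecture~\ref{conj:gouveas}) without proof, and only cites partial progress. Your sketch correctly reproduces the known route to that partial progress --- reduce to the ghost conjecture via \cite[Corollary 3.3]{BP-Ghost2}, then invoke Liu--Truong--Xiao--Zhao's proof of the ghost conjecture for regular $\rhobar$ with generic Serre weights (this is exactly what underlies Theorem~\ref{thm:LTXZ-application}) --- and you correctly identify the two genuine obstacles (the irregular case, where no ghost-series model exists, and the non-generic weights $a \notin [2,p-5]$). So your proposal is not a proof of the conjecture, nor does the paper claim one; it is an accurate summary of the strategy and its current limits, consistent with the paper's own discussion.
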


The history of Conjecture \ref{conj:gouveas} (and in particular, precision as to exactly what Gouv\^ea considered) is contained in Section \ref{subsec:nonarch-dist}, as is an explanation that Conjecture \ref{conj:gouveas} was recently proven by Liu, Truong, Xiao, and Zhao whenever $\rhobar$ is reducible on a decomposition $p$ and has sufficiently generic Serre weights. See Theorem \ref{thm:LTXZ-application}.

We add, given the extra context provided by Section \ref{sec:reanalysis}, that the Liu--Truong--Xiao--Zhao paper also proves a rather strict law.  Under their assumptions, there is an {\it a priori} containment $\mathbf x_T(\rhobar) \subseteq [0,1]$. See \cite[Theorem 1.12]{LTXZ-Proof}.

\subsection{Distribution of $\L$-invariants.}
\label{sec:disL}

In this section, we state our most specific conjecture on the distribution of $\mathcal L$-invariants. Once again, $\Gamma_0 = \Gamma_0(Np)$ with $N$ not divisible by $p$. Let $S_k(\Gamma_0)^{\pnew}$ be the subspace of $S_k(\Gamma_0)$ spanned by eigenforms that are new at the prime $p$ and $S_k(\Gamma_0)^{\pm}$ the subspace of $S_k(\Gamma_0)^{\pnew}$ on which $a_p = \pm p^{\frac{k}{2}-1}$. We then consider a mod $p$ Galois representation 
\begin{equation*}
\rhobar: \Gal(\Qbar/\Q) \rightarrow \GL_2(\Fpbar).
\end{equation*}
We assume that it is modular of level $\Gamma_0(N)$.

\begin{conj}
\label{conj}
For each sign $\pm$, define
\begin{equation*}
\mathbf y_T^{\pm}(\rhobar) = \left\{\frac{2(p+1)}{k(p-1)} \cdot v_p(\L_f) ~\Big|~ \parbox{7cm}{\centering $f \in S_k(\Gamma_0)^{\pm}$ is an eigenform of weight  $k \leq T$ such that $\rhobar_f \simeq \rhobar$} \right\}.
\end{equation*}
Then, the sets $\mathbf y_T^{\pm}$ are equidistributed on $[-1,0]$ for Lebesgue measure as $T \rightarrow \infty$.
\end{conj}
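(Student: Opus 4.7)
The plan is to follow the approach sketched by An, whose proof conditional on the ghost conjecture is mentioned at the end of Section \ref{subsec:intro-ez}. The starting point will be the Greenberg--Stevens formula \eqref{eqn:ap}, together with the identity $v_p(a_p(f)) = \tfrac{k}{2}-1$ for $f \in S_k(\Gamma_0)^{\pm}$, which yields
\[
v_p(\L_f) = v_p\bigl(a_p'(k)\bigr) - \tfrac{k-2}{2}.
\]
The task therefore becomes to analyze the distribution of the $p$-adic derivative $a_p'(k)$ as $f$ ranges over $\Gamma_0$-newforms of weight tending to infinity.

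Next I would interpret $v_p(a_p'(k))$ in terms of the geometry of the Coleman--Mazur eigencurve at $f$. Locally near $f$, the function $a_p$ is a rigid-analytic function of a weight parameter $w$. Because $f$ has slope $\tfrac{k}{2}-1$, the slope of $a_p$ is constant equal to $\tfrac{k}{2}-1$ on some maximal disk around $f$ in weight space, whose $p$-adic radius $L(f)$ I will call the \emph{length} of the constant-slope family through $f$. A standard Newton-polygon estimate for a power series of constant slope on a disk of given radius forces
\[
v_p(a_p'(k)) = v_p(a_p(f)) - L(f) + O(\log k),
\]
and so $v_p(\L_f) = -L(f) + O(\log k)$. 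Thus, after the normalization in Conjecture \ref{conj}, the problem reduces to the equidistribution on $[0,1]$ of the normalized lengths $\tfrac{2(p+1)}{k(p-1)} L(f)$.

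The final step is to prove this equidistribution of lengths. Under the ghost conjecture --- which is known for $\rhobar$ satisfying the hypotheses of Theorem \ref{thm:LTXZ-application} --- the $a_p$-slopes in weight $k$ are the Newton polygon slopes of the ghost series $G_{\rhobar}(\kappa(k), t)$, and $L(f)$ can be extracted combinatorially from the ``halo'' of the ghost Newton polygon around the newform slope $\tfrac{k}{2}-1$. I would adapt the ghost-series argument of \cite[Corollary 3.3]{BP-Ghost2}, which proves Gouv\^ea's distribution conjecture at level $\Gamma_0(N)$, to the $p$-new locus, thereby translating the normalizing factor $\tfrac{p+1}{k}$ there into the factor $\tfrac{2(p+1)}{k(p-1)}$ via a direct computation with ghost zeros. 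This translation is precisely the heuristic computation of Section \ref{subsec:heuristic} and accounts for the specific form of the normalizing constant.

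The principal obstacle is that the ghost conjecture is currently established only for regular $\rhobar$ with generic Serre weights; handling irregular or non-generic cases would require either extending the ghost theory or devising a genuinely global argument that bypasses it altogether. A second, more technical, obstacle is the sign refinement: the ghost-based relation between $\L$-invariants and family lengths does not obviously distinguish the sign of $a_p(f)$, so establishing separate equidistribution for each of $\mathbf y_T^{+}(\rhobar)$ and $\mathbf y_T^{-}(\rhobar)$, rather than merely for their union, will demand a sign-sensitive analysis of which components of $X_k^{\pm}(\rhobar)$ the relevant newform actually meets --- an analysis not directly provided by the reduction to family lengths.
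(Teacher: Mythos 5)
The statement you are addressing is a \emph{conjecture}, and the paper does not prove it. What the paper offers in its place (Section \ref{subsec:heuristic}) is a heuristic derivation of the normalizing constant $\tfrac{2(p+1)}{k(p-1)}$ from Heuristics \ref{heuristic:slopesvsL} and \ref{heuristic:slope-size}, together with numerical evidence, plus a pointer to An's work \cite{An} proving the conjecture (without the sign refinement) conditionally on the ghost conjecture. Your proposal matches that heuristic and An's strategy quite faithfully: you start from the Greenberg--Stevens identity \eqref{eqn:ap}, recast $v_p(\L_f)$ as (minus) the radius of the maximal constant-slope family, and then appeal to ghost-series equidistribution. This is precisely the two-step structure the paper attributes to An in the introduction, and the translation of the normalizing factor from $\tfrac{p+1}{k}$ to $\tfrac{2(p+1)}{k(p-1)}$ by counting $p$-new versus $p$-old dimensions is exactly the computation in Section \ref{sec:disL} and Section \ref{subsec:heuristic}.

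One step in your sketch is stated more strongly than what is actually available. From rigidity (Weierstrass preparation, as the paper itself notes), if $a_p(w)$ is nonvanishing on the disc of constant slope of radius $p^{-L(f)}$, one only obtains the \emph{inequality}
\begin{equation*}
v_p\bigl(a_p'(k_0)\bigr) \;\geq\; v_p\bigl(a_p(k_0)\bigr) + L(f),
\end{equation*}
with the sign of $L(f)$ in the exponent making $v_p(\L_f)\geq -L(f)$, not equality. The near-equality is exactly the heuristic content, and proving it is a real theorem, not a ``standard Newton-polygon estimate.'' In the paper's language this is encoded in Heuristic \ref{heuristic:slopesvsL}, and in An's conditional theorem it is where roughly half the newforms behave ``incorrectly'' (slopes increase rather than decrease as $k$ moves away from $k_0$, as visible in Table \ref{table:slopes-near-52}), controlled up to $O(\log k)$ exceptions. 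You should flag the inequality-vs-equality distinction as part of the gap, not treat it as routine. You have otherwise correctly identified the two genuine obstacles --- the limited range of validity of the ghost conjecture and the sign refinement --- both of which the paper itself acknowledges as open.
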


We first compare and contrast our Conjecture \ref{conj} with Conjecture \ref{conj:gouveas}. Obviously, the data gathered are different numerical quantities over different ranges of eigenforms. Then, there are two more numerical differences.

 First, in Conjecture \ref{conj} we predict an end result supported on $[-1,0]$, while Conjecture \ref{conj:gouveas} predicts an end result supported on $[0,1]$. If we replaced $\L_f$ by $\L_f^{-1}$ in our conjecture, this difference would go away. This replacement may have conceptual advantages, in terms of Section \ref{subsec:rbar-loci}, as $\L^{-1} \approx 0$ is where the semi-stable, non-crystalline, loci in local deformation spaces intersect crystalline loci. But, leaving $\L_f$ the way it is also has the practical advantage that it reminds us that $v_p(\L_f) \leq 0$ almost all the time.

Second, Conjectures \ref{conj:gouveas} and \ref{conj} use different normalizing constants.  Namely, the normalizing constants for the two conjectures are respectively
$$
C_{a_p} = \frac{p+1}{k} \quad \text{and} \quad C_{\L} = \frac{2(p+1)}{k(p-1)}.
$$
These normalizing constants are related by the equation
\begin{equation}\label{eqn:basic-scale}
C_{\L} = \frac{2}{p-1} C_{a_p}.
\end{equation}
This relationship is consistent with the amount of data being gathered. To explain this, we write $S_{\rhobar}$ for the $\rhobar$-isotypic component in a space of cuspforms. Thus, in the $a_p$-case, we range over eigenforms lying $S_k(\Gamma_0(N))_{\rhobar}$, while in the $\L$-invariant case, we range over eigenforms in $S_k(\Gamma_0)^{\pm}_{\rhobar}$. Ignoring the fixed $\rhobar$ and ignoring the sign $\pm$, there is a classical dimension formula that says
\begin{equation}
\label{eqn:dimform}
\dim S_k(\Gamma_0)^{\pnew}  = (p-1)\dim S_k(\Gamma_0(N)) + O(1).
\end{equation}
And, indeed, \eqref{eqn:dimform} continues to hold when one fixes the $\rhobar$-isotypic components, as proven in \cite[Corollary 6.11]{BP-Ghost2}. (The equation relating $Q_{d_t^{\new}}$ to $Q_{d_t}$ is the relevant one in {\em loc.\ cit.}) Even more, forthcoming work of Anni, Ghitza, and Medvedovsky (\cite{AGM}) proves that 
\begin{equation*}
\dim S_k(\Gamma_0)^{\pm}_{\rhobar} = \frac{1}{2}\dim S_k(\Gamma_0)^{\pnew}_{\rhobar} + O(1),
\end{equation*}
as well. Therefore, in simply counting the number of eigenforms for which we gather data, we find that
\begin{equation*}
\dim S_k(\Gamma_0)^{\pm}_{\rhobar} \approx \frac{p-1}{2} \dim S_k(\Gamma_0(N))_{\rhobar},
\end{equation*}
which is the inverse relationship to  \eqref{eqn:basic-scale}. This logic does not explain the {\em a priori} choice of either $C_{a_p}$ or $C_{\L}$, only their ratio.

\subsection{Scatter plots for distributional conjecture}\label{subsec:plot-dist}

To illustrate our evidence toward Conjecture \ref{conj}, we offer normalized scatter plots in level $\Gamma_0(3)$, $\Gamma_0(5)$, $\Gamma_0(7)$, and $\Gamma_0(11)$.

\begin{figure}[htbp]
\centering
\includegraphics[scale=.75]{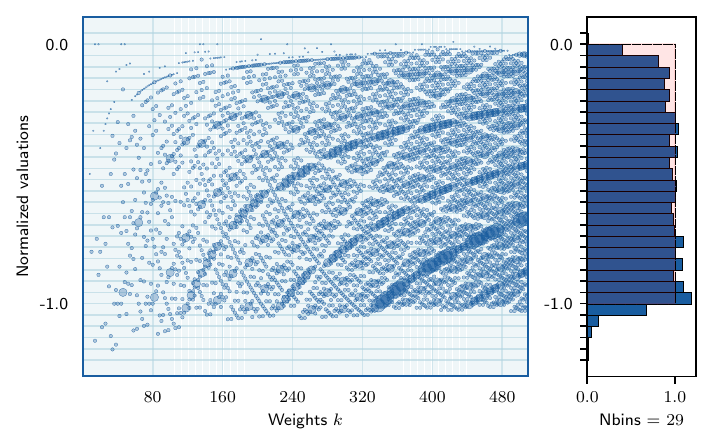}
\caption{Normalized $3$-adic slopes of $\L$-invariants in level $\Gamma_0(3)$ for eigenforms of weight $6 \leq  k \leq 508$. (Total data:\ 10752 eigenforms.)}
\label{fig:3_3_distribution}
\end{figure}

\begin{figure}[htbp]
\centering
\includegraphics[scale=.75]{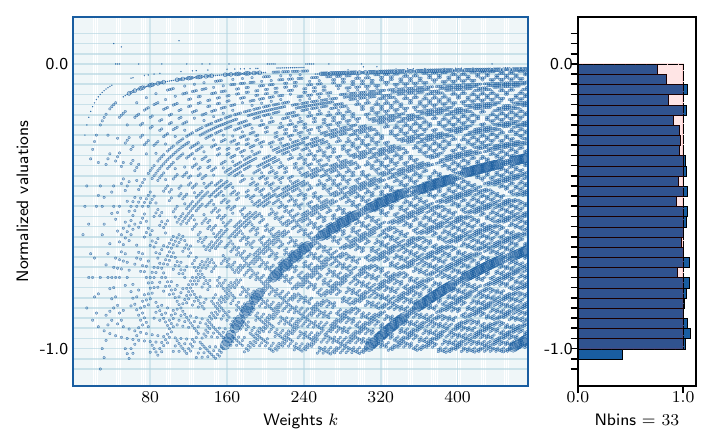}
\caption{Normalized $5$-adic slopes of $\L$-invariants in level $\Gamma_0(5)$ for eigenforms of weight $10 \leq k \leq 472$. (Total data:\ 18560 eigenforms.)}
\label{fig:5_5_distribution}
\end{figure}

\begin{figure}[htbp]
\centering
\includegraphics[scale=.75]{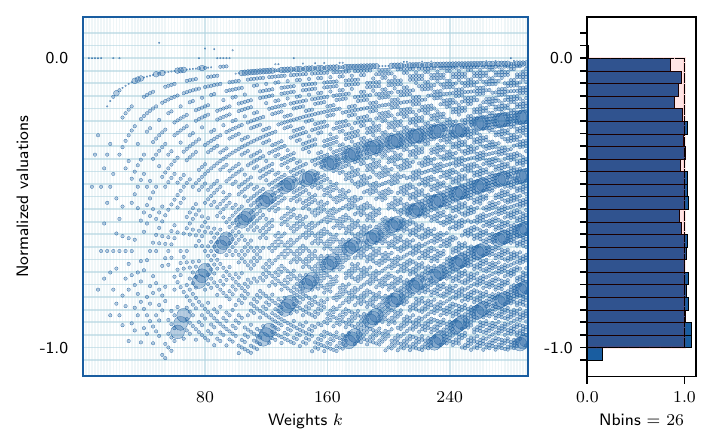}
\caption{Normalized $7$-adic slopes of $\L$-invariants in level $\Gamma_0(7)$ for eigenforms of weight $4 \leq k \leq 290$. (Total data:\ 10511 eigenforms.)}
\label{fig:7_7_distribution}
\end{figure}

\begin{figure}[htbp]
\centering
\includegraphics[scale=.75]{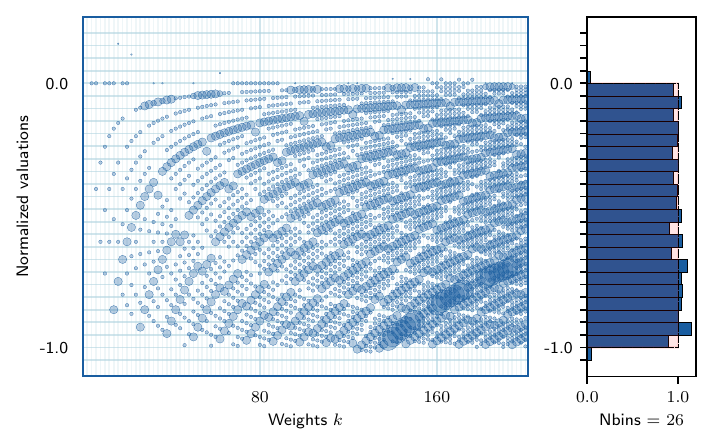}
\caption{Normalized $11$-adic slopes of $\L$-invariants in level $\Gamma_0(11)$ for eigenforms of weight $4 \leq k \leq 200$. (Total data:\ 8322 eigenforms.)}
\label{fig:11_11_distribution}
\end{figure}

\subsection{Heuristic for $\L$-invariants as gradients.}\label{subsec:heuristic}

Conjecture \ref{conj} is supported by data. However, it is also supported by an {\em a priori} prediction. In this section, we explain a heuristic that we used to the predict Conjecture \ref{conj} before extensive data was gathered. There is nothing in this section that should be labeled a theorem, but we hope that this heuristic gives some theoretical underpinnings behind the formulation of Conjecture \ref{conj} and its connection to Gouv\^ea's conjecture Conjecture \ref{conj:gouveas}. The heuristic is based on modeling $\L$-invariants as gradients of the $a_p$-function along the eigencurve.   The logic behind the heuristic can be traced to emails, in 2007, between the second author and Kevin Buzzard. (These emails are also where the second author agreed to smash together computer code to try to calculate some $\L$-invariants.) The logic also motivates arguments on bounding constant slope radii of Coleman families in \cite{Bergdall-bounds}. 

Let $f$ denote a $p$-new eigenform of level $\Gamma_0$ with weight $k_0$ and $a_p(f) = \pm p^{\frac{k_0}{2}-1}$.  Then, $f = \mathbf{f}|_{k=k_0}$ where
\begin{equation*}
\mathbf f = q + a_2(k)q^2 + a_3(k)q^3 + \dotsb
\end{equation*}
is the $q$-expansion of the family of eigenforms arising from a small piece of Coleman and Mazur's eigencurve \cite{ColemanMazur} (see for instance \cite[Corollary B5.7.1]{ColemanBanach}). The function $a_p(k)$ has an analytic expansion
\begin{align*}
a_p(k) &= a_p(k_0) + a_p'(k_0)(k-k_0) + O((k-k_0)^2) \\
 &= \pm p^{\frac{k_0}{2}-1} + a_p'(k_0)(k-k_0) + O((k-k_0)^2),
\end{align*}
and, as mentioned in the introduction as (\ref{eqn:ap}), we have the famous formula
\begin{align}\label{eqn:L-invariant-explicit}
\mathcal L_f &= -2 \frac{a_p'(k_0)}{a_p(k_0)}\\
&= \mp 2 p^{1-\frac{k_0}{2}}a_p'(k_0).
\end{align}
In particular,
\begin{align}
\label{eqn:ap_approx}
a_p(k) &= \pm p^{\frac{k_0}{2}-1}\left( 1 -  \frac{\L_f}{2} \cdot (k-k_0)\right) + O((k-k_0)^2).
\end{align}
From this formula we see that if $k$ is very close to $k_0$, then $v_p(a_p(k))$ will simply equal $v_p(a_p(k_0)) = \frac{k_0}{2}-1$.  However, if $k$ is close but not too close to $k_0$, one could hope that how $v_p(a_p(k))$ varies with $k$ might shed light on $\L_f$.

\begin{remark}
A careful reader will note a slight of hand in the setup. The family $\mathbf f$ is a family of eigenforms of level $\Gamma_0$. At $k_1 \neq k_0$ a classical integer, $f_{k_1} := \mathbf{f}_{k=k_1}$ is an oldform arising from the {\em $p$-refinement} of a level $\Gamma_0(N)$-eigenform $g$. What we are calling $a_p(k_1)$ is actually one of the two roots of the $p$-th Hecke polynomial $x^2 - a_p(g)x + p^{k_1-1}$. Note that Gouv\^ea's distribution conjecture predicts with 100\% certainty that $v_p(a_p(g)) = v_p(\alpha_p)$ where $\alpha_p$ is the root of the smaller valuation. Since the sum of the valuations of the roots is $k_1-1$, once $k_1 \gg k_2$, we would have with near certainty that $v_p(a_p(g)) = v_p(a_p(k_1))$.
\end{remark}

We illustrate the above idea with a numerical example.  Take $p=3$ and $k=52$ in tame level $N = 1$.  Table \ref{table:slopes-near-52} lists the slopes of eigenforms of level $\Gamma_0$ which appear in a sequence of weights $3$-adically approaching 52. The slopes in weight $52$ appear at the bottom of the table. We visually cut the table at weight $52 + 2 \cdot 3^{12}$, as the listed data after that point is constant and matches the data in weight $52$.\footnote{Actually the data presented here is computed via the ghost series as the weights which appear in this table are too large to effectively compute true slopes.  Thus, this data is contingent on the ghost conjecture which is not yet known in this case.}

{\small 
\begin{table}[htp]
\renewcommand{\arraystretch}{1.2}
\caption{List of slopes $v_3(a_3(f))$ (in increasing order) for eigenforms $f \in S_k(\Gamma_0(3))$. The fifth through twelfth slopes are bolded, for illustration purposes.}
\begin{center}
\begin{tabular}{|c|c|}
\hline
$k$ & Slopes\\
\hline
$52+2 \cdot 3^{ 3 }$& 3, 6, 8, 12, \textbf{16, 18, 21, 24, 52, 52, 52, 52}, 52, 52, 52, 52, $\dots$\\
$52+2 \cdot 3^{ 4 }$& 3, 6, 8, 12, \textbf{17, 19, 22, 25, 25, 28, 31, 33}, 39, 43, 45, 48, $\dots$ \\
$52+2 \cdot 3^{ 5 }$& 3, 6, 8, 12, \textbf{18, 20, 23, 25, 25, 27, 30, 32}, 39, 43, 45, 48, $\dots$ \\
$52+2 \cdot 3^{ 6 }$& 3, 6, 8, 12, \textbf{19, 21, 24, 25, 25, 26, 29, 31}, 39, 43, 45, 48, $\dots$ \\
$52+2 \cdot 3^{ 7 }$& 3, 6, 8, 12, \textbf{20, 22, 25, 25, 25, 25, 28, 30}, 39, 43, 45, 48, $\dots$ \\
$52+2 \cdot 3^{ 8 }$& 3, 6, 8, 12, \textbf{21, 23, 25, 25, 25, 25, 27, 29}, 39, 43, 45, 48, $\dots$ \\
$52+2 \cdot 3^{ 9 }$& 3, 6, 8, 12, \textbf{22, 24, 25, 25, 25, 25, 26, 28}, 39, 43, 45, 48, $\dots$ \\
$52+2 \cdot 3^{ 10 }$& 3, 6, 8, 12, \textbf{23, 25, 25, 25, 25, 25, 25, 27}, 39, 43, 45, 48, $\dots$ \\
$52+2 \cdot 3^{ 11 }$& 3, 6, 8, 12, \textbf{24, 25, 25, 25, 25, 25, 25, 26}, 39, 43, 45, 48, $\dots$ \\
\hline\hline
$52+2 \cdot 3^{ 12 }$& 3, 6, 8, 12, \textbf{25, 25, 25, 25, 25, 25, 25, 25}, 39, 43, 45, 48, $\dots$ \\
$52+2 \cdot 3^{ 13 }$& 3, 6, 8, 12, \textbf{25, 25, 25, 25, 25, 25, 25, 25}, 39, 43, 45, 48, $\dots$ \\
$\vdots$ & $\vdots$\\
\hline\hline
$52$ & 3, 6, 8, 12, \textbf{25, 25, 25, 25, 25, 25, 25, 25}, 39, 43, 45, 48\phantom{, $\dots$} \\
\hline
\end{tabular}
\end{center}
\label{table:slopes-near-52}
\end{table}
}

In weight $k_0=52$ we are only interested in the slope $25 = \frac{k_0}{2}-1$ of the eight newforms. These are the fifth through the twelfth slopes, when ordered as in the table. Let's focus our attention on the fifth slope in each weight in the table, which is the first bolded slope.  As we move from the bottom of the table upwards,  this fifth slope is 25 in weights $52 + 2 \cdot 3^j$ for $j \geq 12$ and for smaller $j$ the slope decreases by 1 in each line in the table.   The same phenomenon occurs for the sixth through eighth slopes but now the cutoff points after which the slopes begin to decrease (by one) is $j=10$, $7$ and $4$, respectively. This behavior suggests the analytic expansions in \eqref{eqn:ap_approx} of $a_p(k)$ corresponding to these newforms in weight 52 behaves like a linear function.  Indeed, if $a_p(k)$ were linear, by \eqref{eqn:ap_approx}, we would have 
\begin{align*}
a_p(k) &= \pm p^{\frac{k_0}{2}-1}\left( 1 -  \frac{\L_f}{2} \cdot (k-k_0)\right) 
\end{align*}
and, in particular, as $k$ moves further away from $k_0$ the valuation of $a_p(k)$ would steadily decrease.  Precisely, the linear behavior would result in
\begin{eqnarray}
\label{eqn:changeinL}
v_p(a_p(k)) = \begin{cases} \displaystyle \frac{k_0-2}{2} & 
\text{if~} v_p(k-k_0) > - v_p(\L_f) \\
\displaystyle  \frac{k_0-2}{2} - t & 
\text{if~}  v_p(k-k_0) = -v_p(\L_f) - t \quad (t>0).
\end{cases}
\end{eqnarray}
Note  that under \eqref{eqn:changeinL}, the valuations of $\L$-invariants determine the cutoff points where the slopes begin to decrease in Table \ref{table:slopes-near-52}.  This exactly matches the numerical data we collected on $\L$-invariants! Specifically, in our example of weight 52 and level $\Gamma_0(3)$, the valuations of the $\L$-invariants are $-4, -4, -7, -7, -10, -10, -12, -12$ which are exactly the cutoffs (doubled) we observed in the table above.

This heuristic, though, must be taken with a large grain of salt for at least two reasons. Firstly, if we look at the second half of the slopes in weight 52 (the ninth through twelfth slopes),  an opposite phenomenon occurs. The slopes actually {\em increase} by one as $k$ moves further from $k_0=52$.  Secondly, there is absolutely no reason to believe that the power series $a_p(k)$ converges for $k$ so far away from $k_0$. In fact, since $a_p(-)$ is a non-vanishing function on the eigencurve, the norm $|a_p(k)|$ {\em must} be constant on any disc over which the expansion \eqref{eqn:ap_approx} converges. (This is a consequence of the Weierstrass preparation theorem.) Nonetheless, the behavior in this example is mirrored in countless other examples.

We formalize this discussion in the following heuristic.  Fix a global mod $p$ Galois representation $\rhobar$ occurring in level $\Gamma_0$. We write the following for various dimensions. 
\begin{equation*}
d_k =\dim S_k(\Gamma_0(N))_{\rhobar}, \;\;\;\; d_k^{\new} = \dim S_k(\Gamma_0)^{p\text{-new}}_{\rhobar}, \;\;\;\; d_{k,p} = \dim S_k(\Gamma_0)_{\rhobar}.
\end{equation*}
 Let $S_k^{\ve}(\Gamma_0)_{\rhobar}^{\new}$ denote the subspace of $S_k(\Gamma_0)_{\rhobar}^{\new}$ generated by eigenforms whose sign of $a_p$ equals $\varepsilon = \pm$.

\begin{heuristic}
\label{heuristic:slopesvsL}
Fix $k_0 \geq 2$ and choose $k \equiv k_0 \pmod{p-1}$ large enough so that $d_k > d_{k_0,p}$.  Denote the slopes which occur in $S_k(\Gamma_0(N))_{\rhobar}$ by $s_1 \leq s_2 \leq \dots \leq s_{d_k}$.  
If $v_p(k-k_0) > \log(k_0)/\log(p)$, 
then, for $\varepsilon = \pm$, there is some labeling $f_1, f_2, \dots$ of the newforms in $S^{\ve}_{k_0}(\Gamma_0)^{\new}_{\rhobar}$ such that 
\begin{equation}
\label{eq:slopevsL}
s_{i+d_{k_0}} = 
\begin{cases}
\ds v_p(\L_{f_i}) + \frac{k_0-2}{2} + v_p(k-k_0) & \text{if~} v_p(k-k_0) < -v_p(\L_{f_i}) \\
\ds \frac{k_0-2}{2}  & \text{if~} v_p(k-k_0) \geq -v_p(\L_{f_i}),\\
\end{cases}
\end{equation}
for  $1 \leq  i \leq \dim S^{\ve}_{k_0}(\Gamma_0)^{\new}_{\rhobar}$.
\end{heuristic}

Some remarks are in order.  First, the precise bound $v_p(k-k_0) > \log(k_0)/\log(p)$ might appear surprising, but it (a) matches numerical computations, and (b) is consistent with the following observation. If the above heuristic holds, then the slopes appearing in \eqref{eq:slopevsL} do not depend directly on $k$, but rather on $v_p(k-k_0)$.  From the perspective of the ghost series, such independence of slopes would occur if $k$ is closer to $k_0$ than to any of the zeroes of the coefficients of the ghost series in the indices up to $d_{k_0,p}$.  This condition is guaranteed by the inequality $v_p(k-k_0) > \log(k_0)/\log(p)$.

Second, for $k$  such that 
\begin{equation}
\label{eqn:kbound}
\log(k_0)/\log(p) < v_p(k-k_0) < -v_p(\L_f),
\end{equation} 
solving \eqref{eq:slopevsL} for $v_p(\L_f)$ gives
\begin{equation}
\label{eqn:valL}
v_p(\L_f) = s_{i+d_{k_0}} - \frac{k_0-2}{2} - v_p(k-k_0).
\end{equation}
This equation is the key for us as it relates the valuation of $\L_f$ to slopes, and slopes can be more readily controlled.  However, it is possible that no such $k$ exists 
satisfying \eqref{eqn:kbound}.   Indeed, if say $v_p(\L_f) = 0$ then \eqref{eqn:kbound} clearly cannot be satisfied.  Fortunately, our numerical data suggests that $v_p(\L_f)$ is typically quite negative and, in weight $k_0$, is very rarely larger than $-\log(k_0)/\log(p)$.  In what follows, we will assume that there exists a weight $k$ satisfying \eqref{eqn:kbound} and hope that this excludes only a thin set of $\L$-invariants.

We now state a second heuristic on the sizes of slopes.

\begin{heuristic}\label{heuristic:slope-size}
If $s_1\leq s_2 \leq \dotsb \leq s_{d_k}$ denote the slopes in $S_k(\Gamma_0(N))_{\rhobar}$, then
\begin{equation*}
s_i = \frac{k}{p+1}\cdot \frac{i}{d_k} + O(\log k).
\end{equation*}
\end{heuristic}

Where does this heuristic come from?  Recall that  Gouv\^ea's distribution conjecture (Conjecture \ref{conj:gouveas}) predicts that the normalized slopes $\frac{p+1}{k} \cdot s_i$ are uniformly distributed between $0$ and $1$ as $k \to \infty$.  If we remove the $O(\log k)$-term in the above heuristic, it would be asserting that the $i$-th normalized slope equals $\frac{i}{d_k}$.  In particular,  these normalized slopes would be perfectly, evenly, placed between $0$ and $1$ for every $k$.  Such a claim is certainly too strong and thus we need some error term for which we choose $O(\log k)$.  Heuristic \ref{heuristic:slope-size} is thus a strengthening of Gouvea's distribution conjecture.  Moreover, this heuristic follows from the author's ghost conjecture when $\rhobar$ is regular (see  \cite[Theorem 3.1(a)]{BP-Ghost2}) with this $O(\log k)$ error term.

To connect back to $\L$-invariants, take a minimal $k$ satisfying \eqref{eqn:kbound} and note that $k \approx k_0 + (p-1)p^{\log(k_0)/\log(p)} = pk_0$ so that $O(k) = O(k_0)$.
Combining Heuristic \ref{heuristic:slope-size} with \eqref{eqn:valL} then yields
\begin{equation}
\label{eqn:Lformula}
v_p(\L_f) = \frac{k}{p+1} \cdot \frac{i+d_{k_0}}{d_k} - \frac{k_0}{2} + O(\log k_0).
\end{equation}
for $i=1$ to $i = \dim S^{\ve}(\Gamma_0)^{\new}_{\rhobar} \approx d_{k_0}^{\new}/2$.  Here we pulled $v_p(k-k_0)$ and constants into the $O(\log k_0)$-term.

Note that $v_p(\L)$ varies linearly with $i$ in \eqref{eqn:Lformula}.  Thus to understand the distribution of $v_p(\L)$, we can simply evaluate 
\eqref{eqn:Lformula} at the smallest and largest values of $i$.

To this end, for $i=1$, we have
\begin{align*}
v_p(\L) &\approx \frac{k}{p+1} \cdot \frac{d_{k_0}}{d_k} - \frac{k_0}{2}
\approx \frac{k}{p+1} \cdot \frac{k_0}{k} - \frac{k_0}{2}\\
&= -\frac{(p-1)k_0}{2(p+1)}.
\end{align*}
Here we used the fact that $d_k$ grows linearly with $k$ up to bounded error (\cite[Proposition 6.9(a)]{BP-Ghost2}).   Note that our normalizing constant 
$$
C_{\L} = \ds\frac{2(p+1)}{p-1}
$$ 
has appeared!

For $i \approx d_{k_0}^{\new}/2$, we have 
\begin{align*}
v_p(\L) 
&\approx \frac{k}{p+1} \cdot \frac{d_{k_0}+\frac{d_{k_0}^{\new}}{2}}{d_k} - \frac{k_0}{2}\\
&\approx \frac{k}{p+1} \cdot \frac{k_0+\frac{p-1}{2} \cdot k_0}{k} - \frac{k_0}{2} \\
&=k_0 \left( \frac{1+ \frac{p-1}{2}}{p+1} - \frac{1}{2} \right) = 0.
\end{align*}
Here we have used  \eqref{eqn:dimform}, that $d_{k_0}^{\new} \approx (p-1) d_{k_0}$.

Heuristics \ref{heuristic:slopesvsL} and \ref{heuristic:slope-size} thus imply that 
$$
\frac{2(p+1)}{k_0(p-1)} v_p(\L) \text{ are uniformly distributed over } [-1,0]
$$
as $k_0 \to \infty$ which is exactly the statement of Conjecture \ref{conj}.

We also offer further numerical data for the reader who is skeptical of Heuristic \ref{heuristic:slopesvsL}.   We repeated the numerical experiment explained earlier for $p=3$ and $k=52$, but now for $p = 5$ and weights $2 \leq k_0 \leq 300$. By looking at the change of the multiplicity of the slope $\frac{k}{2}-1$ in weights $k_0 + 4 \cdot 5^j$ as $j$ varies, we used 
Heuristic \ref{heuristic:slopesvsL} to predict the valuations of $\L$-invariants in weight $k_0$.  We note again that the size of the weights which we used were astronomically large --- for $k_0 \approx 240$, we needed $j \approx 80$ so that we were in weight around $5^{80}$.  It is infeasible to directly compute in weights this large, but instead we used the ghost series which has no problem handling such large weights.

\begin{figure}[htbp]
\centering
\includegraphics[scale=.6]{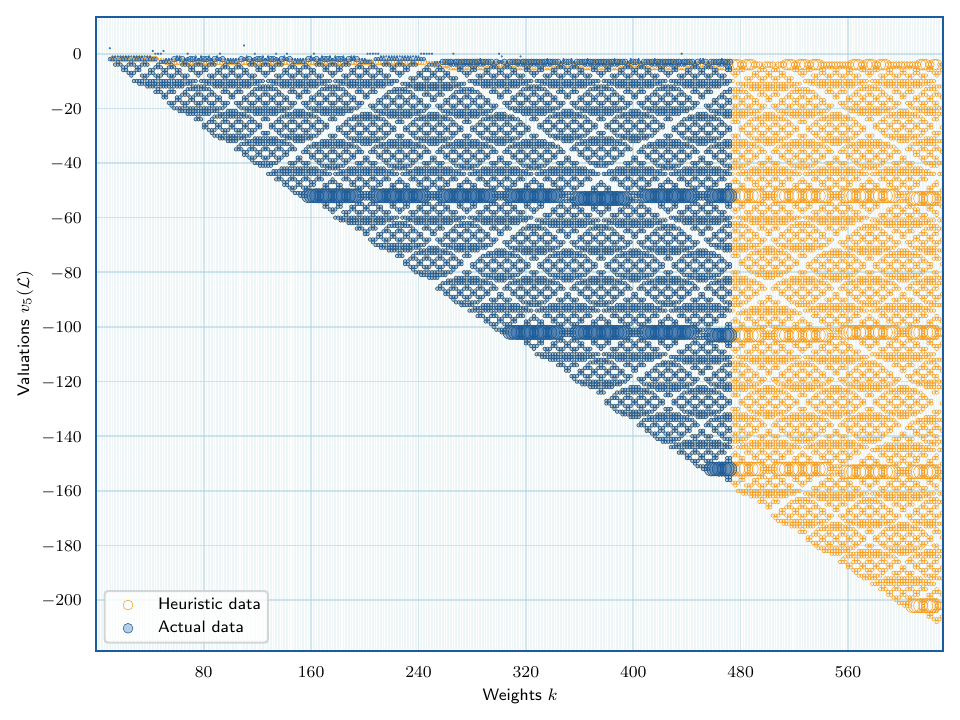}
\caption{$5$-adic slopes of $\L$-invariants in level $\Gamma_0(5)$ for $10 \leq k \leq 472$ (in blue) versus the heuristic predictions for $10 \leq k \leq 630$. Total real data:\ 18560 eigenforms. Total heuristic data: 33069 ``eigenforms''.}
\label{fig:5_5_heuristic}
\end{figure}

The result of our experiment is plotted in Figure \ref{fig:5_5_heuristic}. There, the darker data represents actual gathered data on $\L$-invariants. Underneath that data, we've plotted what the heuristic predicts. Except for $v_p(\L) \approx 0$, you can see a {\em perfect} matching between the heuristic calculations and the actual calculations. Returning to the discussion above, the failure to predict $v_p(\L) \approx 0$ makes sense --- that would involve predicting the absolutely largest $p$-adic families one could imagine.

\begin{remark}
In \cite{An}, assuming the ghost conjecture for $\rhobar$, Jiawei An proves that Heuristic \ref{heuristic:slopesvsL} holds for half of the newforms (with $O(\log k)$ possible exceptions).  In fact, he also proves that for the other half of the newforms, the slopes increase as one moves away from $k_0$ (as we observed in Table \ref{table:slopes-near-52}).  However, he has not yet established that this splitting matches the splitting given by the sign of $a_p$.
\end{remark}

\section{Further questions}\label{sec:other}

We end with just a few questions.

\subsection{High multiplicity slopes}\label{subsec:caterpillars}

In Figure \ref{fig:3_3_raw}, one sees that in level $\Gamma_0(3)$, some choices of $v = v_p(\L)$ occur with particularly high frequency. Namely, you can see that around $v\approx -30$, $v  \approx -60$, $v \approx -85$ there are particular bands of data that occur point-by-point with high multiplicity. In the $\Gamma_0(5)$-data in Figure \ref{fig:5_5_raw} these occur for $v \approx -10, -20, -30$. For $\Gamma_0(11)$, the first extremely obvious band occurs for $v \approx -55$. What are these numbers? Why are they occurring?

To answer the first question, we performed the following experiment. For each $v$, we gathered the total multiplicity of $v_p(\L)$ occurring in the closed intervals $[v-2,v+2]$. While $2$ is somewhat arbitrary, recording multiplicities over intervals of positive length captures clustering we see in our figures, for instance around around $v_p(\L) = -10, -11, -12$ in Figure \ref{fig:5_5_raw}. The results for levels $\Gamma_0(3)$, $\Gamma_0(5)$, and $\Gamma_0(7)$ are displayed, respectively, in Figures \ref{fig:3_3_caterpillar}, \ref{fig:5_5_caterpillar}, and \ref{fig:7_7_caterpillar}. 

In those figures, the top plots represent the total multiplicities calculated for each $v = -v_p(\L)$. The bottom plots represent multiplicities {\em normalized} in the following way. As suggested in Section \ref{subsec:thresholds} and formalized in Conjecture \ref{conj}, we expect that over a range $0 \leq k \leq k_{\max}$, the multiplicity of the slope $v=-v_p(\L)$ is proportional to $k_{\max} - v/C_{\L}$ where $C_{\L} = \frac{p-1}{2(p+1)}$. The raw data collected determines the constant of proportionality, and therefore we can take the ratio of the actual slope multiplicities to their expected valued.  That ratio is plotted as the bottom half of each figure.

\begin{figure}[htbp]
\centering
\includegraphics[scale=0.75]{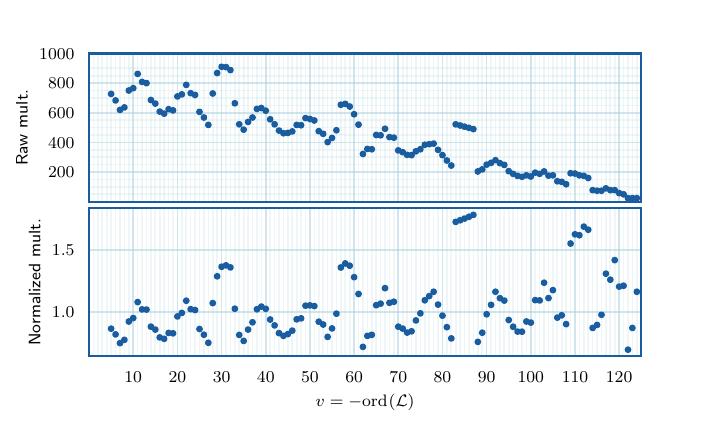}
\caption{Multiplicities (vertical) versus $v_p(\L)=-v$ (horizontal) in level $\Gamma_0(3)$. Top plot is raw multiplicities. Bottom plot is normalized multiplicities.}
\label{fig:3_3_caterpillar}
\end{figure}

\begin{figure}[htbp]
\centering
\includegraphics[scale=0.75]{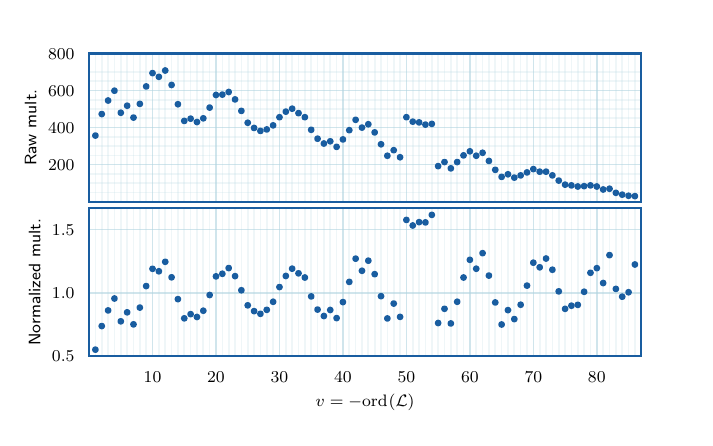}
\caption{Multiplicities (vertical) versus $v_p(\L)=-v$ (horizontal) in level $\Gamma_0(5)$. Top plot is raw multiplicities. Bottom plot is normalized multiplicities.}
\label{fig:5_5_caterpillar}
\end{figure}

\begin{figure}[htbp]
\centering
\includegraphics[scale=0.75]{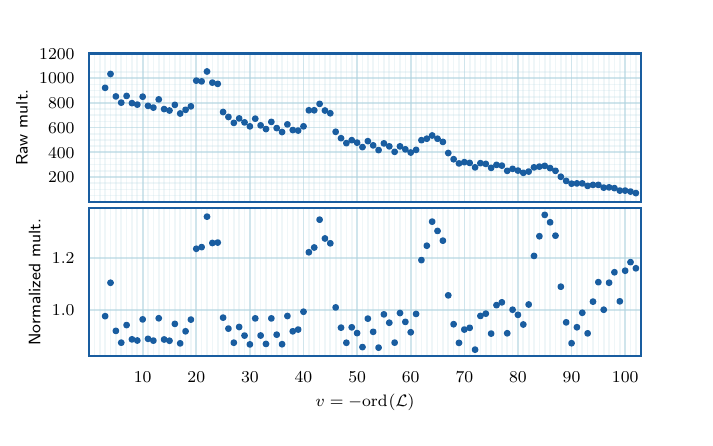}
\caption{Multiplicities (vertical) versus $v_p(\L)=-v$ (horizontal) in level $\Gamma_0(7)$. Top plot is raw multiplicities. Bottom plot is normalized multiplicities.}
\label{fig:7_7_caterpillar}
\end{figure}

As predicted, our data set reveals systematic biases towards certain valuations for $\L$-invariants. The pattern for which valuations are overrepresented is most clearly seen starting with $p = 7$ in Figure \ref{fig:7_7_caterpillar}. What we notice is that in level $\Gamma_0(7)$, the four spikes in the normalized slope multiplicity occur with period $21 = 7\cdot 3$ slopes. Moving to Figure \ref{fig:5_5_caterpillar}, the spikes occur with period $10 = 5 \cdot 2$, and a particularly large spike occurs just past $50 = 5^2 \cdot 2$.

 Based on this, we guess that multiplicity spikes occur periodically with period $p\cdot\frac{p-1}{2}$. Within that period behavior, the spikes are even taller every $p^2\cdot \frac{p-1}{2}$, and even taller than that every $p^3\cdot \frac{p-1}{2}$, and so on. For $p = 3$, we should see spikes every $3$ slopes, with larger spikes every $9$, extra large spikes every $27$ and extra super duper large spikes every $81$. While the every-$3$-slopes spike is difficult to see in Figure \ref{fig:3_3_caterpillar}, the other spike predictions essentially check out.
 
This begs the question:\ {\em why} are these spikes in slope multiplicity presenting themselves? At present, we have no answer to this question. The spikes themselves present a challenge to Conjecture \ref{conj}, since that conjecture predicts there is no particular bias towards one range of slopes over the other, as $k \rightarrow \infty$.

\subsection{$p$-adic slopes of algebraic central $L$-values and central $p$-adic $L$-derivatives}

Recall that we computed $\L$-invariants as a ratio of a derivative of a $p$-adic $L$-function and an $L$-value. Since Conjecture \ref{conj} proposes a statistical distribution of $\L$-invariants, it is natural to ask also about the distribution of the valuations of the numerator and denominator in such ratios.

Figure \ref{fig:5_5_Lderiv} presents three scatter plots. The top left is a smaller version of Figure \ref{fig:5_5_raw}, in which we plot slopes of $\L$-invariants versus weights, in level $\Gamma_0(5)$. The top right portion of the figure plots the valuations of $L_\infty(f,k/2)$, while the bottom portion of the figure plots the valuations of $L_p'(f,k/2)$. (At least, with the caveat that we have to twist $f$ by a Dirichlet character.)

\begin{figure}[htbp]
\centering
\includegraphics{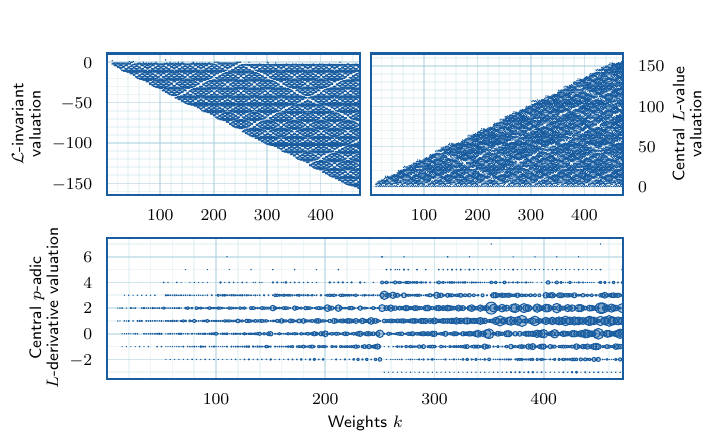}
\caption{Multiplicities of the valuations of (i) $\L$-invariants (top left) versus (ii) algebraic central $L$-values (top right) versus (iii) $p$-adic central $L$-derivatives (bottom) for newforms of $\Gamma_0(5)$ and weight $10 \leq k \leq 268$. (Total data:\ 5980 eigenforms)}
\label{fig:5_5_Lderiv}
\end{figure}

Note that the magnitude of $v_p(L_p'(f,k/2))$ is overall much smaller than the other data. In fact, the maximum datum is slope 7 and the minimum is slope -3. Therefore, we find the distribution of the valuations of $L_\infty(f,k/2)$ looks like the negative of the distribution of $\L$-invariants.  In particular, these $L$-values become very divisible by $p$ when the corresponding $\L$-invariant has very negative valuation.  What could be causing this behavior?

First, note that $\L$-invariants are local invariants in the Galois-theoretic perspective. So their sheer negativity is somehow a local phenomenon. From this, we reasonably guess that the values $L_\infty(f,k/2)$ that are highly divisible by $p$ are explained by local phenomena as well. From the perspective of the Birch and Swinnerton-Dyer conjecture, the $p$-adic local part of the algebraic central value $L_\infty(f,k/2)$ is the Tamagawa factor at $p$. Perhaps that factor is becoming divisible by $p$ in a systematic fashion over the data sets we gathered.
 
It would be interesting to explore this heuristic for modular eigenforms of any weight. Here we can at least look at the simplest case of an eigenform attached to an elliptic curve. The elliptic curve $E_{/\Q}$ in question would have split multiplicative reduction at $p$ and we write $q_E$ for its Tate parameter at $p$. Then, the Tamagawa factor at $p$ is given by $\ord_p(q_E)$, while the $\L$-invariant $E$ is given by $\log_p(q_E)/\ord_p(q_E)$. So, there is a clear connection between the $p$-divisibility of the Tamagawa factor and $p$-adic denominators of  $\L$-invariants. You can even see in this discussion that you would expect the valuation of $L_p'(E,k/2)$ to have a slight bias towards being positive, accounting for the fact that $\log_p(q_E)$ must be divisible by $p$. That bias is present in the bottom plot of Figure \ref{fig:5_5_Lderiv}.

\subsection{On the integrality of slopes of $\L$-invariants}

We end this paper with a question. To set the context, let $f$ be an eigenform on $\Gamma_0(N)$ with $p \nmid N$. Assume as well that $f$ is regular, which we recall means that $\rbar_f = \rhobar_f|_{\Gal(\Qpbar/\Qp)}$ is reducible. It is a folklore conjecture that $v_p(a_p(f))$ must be an integer. 

In fact, this global phenomenon is a consequence of a local conjecture. Namely, assume $v_p(a) > 0$ and let $r_{k,a}$ denote the unique irreducible crystalline representation with Hodge-Tate weights $0$ and $k-1$ on which Frobenius has characteristic polynomial $x^2 - a x + p^{k-1}$.  The local conjecture proposes that when $\overline{r}_{k,a}$ is regular and $k$ is even, then $v_p(a)$ is integral. See \cite[Conjecture  4.1.1]{BG-Survey}. Under relatively mild hypotheses, this local conjecture (and thus the global one) has been proven in \cite[Theorem 1.9]{LTXZ-Proof} using the machinery of the ghost conjecture.

We now consider the local conjecture from the perspective of Section \ref{subsec:rbar-loci}.  Fix $k$ even and $\rbar$ a regular local representation. Define $X_k^{\mathrm{crys}}(\rbar)$ within the open disc $v_p(a) > 0$ as those $a$ for which $\rbar \simeq \rbar_{k,a}$. This is a standard subset of the open disc, and the local conjecture implies that the valuation map $v_p : X_k^{\mathrm{crys}}(\rbar) \rightarrow \mathbb{Q}_{>0}$ takes values in the positive whole numbers. In particular, $v_p(-)$ is constant on connected components of $X_k^{\mathrm{crys}}(\rbar)$. Of course, $v_p(-)$ would {\em not} be constant on any component containing $a = 0$, and as a soft consistency check we know that $\rbar_{k,a}|_{a=0} \simeq \ind(\omega_2^{k-1})$ is always irregular.

What happens for $\L$-invariants? What is the image of the valuation map $v_p : X_k^{\pm}(\rbar) \rightarrow \mathbb P^1(\Qpbar)$? We only have global data through which to study this question. In looking at our data, we found that if $f$ was regular, then we had $v_p(\L_f) \in \mathbb Z$ the vast majority of the time. But, there were exceptions. Some examples occur for $3$-adic $\L$-invariants of eigenforms of level $\Gamma_0(21)$ that are globally lifts of $1 \oplus \omega$. Yet others occurred for $5$-adic $\L$-invariants lifting globally irreducible $\rhobar$ which are nevertheless locally split at $p$.

The examples we found illustrate one difference between slopes of $a_p$ and slopes of $\L$-invariants.  The issue is that the representation $\rbar_{k,\L}^{\pm}|_{\L=0}$ can be either regular or not, depending on the weight $k$. See, for instance, the computations of Breuil and M\'ezard for $2 < k < p$ and $k$ even \cite[Th\'eor\`eme 4.2.4.7]{BM}. And, naturally, when $\rbar = \rbar_{k,\L}^{\pm}|_{\L=0}$ is regular, there is some component of $X_k^{\pm}(\rbar)$ over which $v_p(\L)$ will vary. We suspect this explains why $v_p(\L_f)$ is sometimes non-integral, even when $\rbar_f$ is regular. Indeed, each time we found $v_p(\L_f) \not\in \mathbb Z$ but $\rbar_f$ was regular, it also happened that $v_p(\L_f)$ was the largest ({\it i.e.}\ most positive!) datum among all the $\rhobar_f$-data gathered in the same weight as the weight of $f$.

To end this article, we ask a precise question related to integrality properties for $v_p(\L_f)$ when $f$ is a regular eigenform.

\begin{question}\label{question:final}
Fix an even integer $k \geq 2$, a choice of sign $\pm$, and $\rbar: \mathrm{Gal}(\Qpbar/\Qp) \rightarrow \GL_2(\Fpbar)$. Suppose that $\rbar$ is regular and  $X \subseteq X_k^{\pm}(\rbar)$ is a connected component that does not contain $\L = 0$.
\begin{itemize}
\item Is it true that $v_p : X \rightarrow \mathbb P^1(\Qpbar)$ is constant?
\item If it is constant, is true that $v_p(X)$ is a single integer?
\end{itemize}
\end{question}

Note that there is another obvious way $X$ in Question \ref{question:final} would be forced to have non-constant valuations, because {\em a priori} it might be that $X$ is a connected component around $\infty \in \mathbb P^1(\Qpbar)$. However, that case is ruled out if $\rbar$ is regular, in the same way that $a = 0$ is ruled out in the crystalline case:\ the representation $\rbar_{k,\L}^{\pm}|_{\L=\infty} \simeq \ind(\omega_2^{k-1})$ is irreducible, hence irregular. See the discussion in Section \ref{subsec:thresholds-galois}.

We know of no progress towards such a local statement, but there is progress towards an integral $\L$-invariant slope conjecture for modular forms (with some controlled exceptions).  Namely, Jiawei An, assuming the ghost conjecture, has proven that $\L$-invariants of regular eigenforms in even weight $k$ have integral slope with at most $O(\log k)$ exceptions \cite{An}.

\bibliography{../Linv-bib.bib}
\bibliographystyle{abbrv}

\end{document}